\documentclass[11pt]{aims}
\usepackage{amsmath, amssymb, mathrsfs}
  \usepackage{paralist}
  \usepackage{graphics} 
  \usepackage{graphicx}
  \usepackage{epsfig} 
\usepackage{graphicx}  \usepackage{epstopdf}
 \usepackage[colorlinks=true]{hyperref}
 \hypersetup{urlcolor=blue, citecolor=red}
 \usepackage[toc,page]{appendix}
\usepackage{chngcntr}
\usepackage{nicefrac}
\usepackage[square,numbers]{natbib}

\usepackage{titlesec}
\titleformat{\section}{\Large\bfseries}{\thesection.}{4pt}{}
\titleformat{\subsection}{\large\bfseries}{\thesection.\arabic{subsection}.}{4pt}{}
\titleformat{\subsubsection}{\bfseries}{\thesection.\arabic{subsection}.\arabic{subsubsection}.}{4pt}{}
\titleformat*{\paragraph}{\bfseries}
\titleformat*{\subparagraph}{\bfseries}
\setcounter{secnumdepth}{3}

\usepackage[margin=1.2in]{geometry}


\newtheorem{theorem}{Theorem}[section]
\newtheorem{corollary}[theorem]{Corollary}

\newtheorem{lemma}[theorem]{Lemma}
\newtheorem{proposition}[theorem]{Proposition}
\theoremstyle{definition}
\newtheorem{definition}[theorem]{Definition}
\newtheorem{remark}[theorem]{Remark}

\newcommand{\ep}{\varepsilon}

\newcommand{\Rb}{\mathbb{R}}

\newcommand{\Lc}{\mathcal{L}}

\newcommand{\Sc}{\mathcal{S}}

\newcommand{\Oc}{\mathcal{O}}

\newcommand{\Cc}{\mathcal{C}}

\newcommand{\Fc}{\mathcal{F}}

\newcommand{\Pc}{\mathcal{P}}

\newcommand{\Ls}{\mathscr{L}}

\newcommand{\Ms}{\mathscr{M}}

\newcommand{\As}{\mathscr{A}}

\newcommand{\pa}{\partial}

\newcommand{\inn}{\textup{in}}

\newcommand{\bd}{\textup{bou}}

\newcommand{\be}{\begin{equation}}
\newcommand{\ee}{\end{equation}}

\newcommand{\intb}{\int_{-\nicefrac{1}{\nu}}^\infty}

\numberwithin{equation}{section}

\title[Collapsing-ring blowup solutions for the Keller-Segel system] 
      {Collapsing-ring blowup solutions for the Keller-Segel system in three dimensions and higher}

\author[]{}
\subjclass{Primary: 35K50, 35B40; Secondary: 35K55, 35K57.}
 \keywords{Gravitational collapse, Keller-Segel system, Blowup solution, Stability}


\author[C. Collot]{Charles Collot}
\address{CNRS and CY Cergy Paris Universit\'e, 2 rue Adolphe Chauvin, 95300 Pontoise, France.}
\email{ccollot@cyu.fr}
\author[T. Ghoul]{Tej-Eddine Ghoul}
\address{Department of Mathematics, New York University in Abu Dhabi, Saadiyat Island, P.O. Box 129188, Abu Dhabi, United Arab Emirates.}
\email{teg6@nyu.edu}
\author[N. Masmoudi]{Nader Masmoudi}
\address{Department of Mathematics, New York University in Abu Dhabi, Saadiyat Island, P.O. Box 129188, Abu Dhabi, United Arab Emirates.}
\email{masmoudi@cims.nyu.edu}
\author[V. T. Nguyen]{Van Tien Nguyen}
\address{Department of Mathematics, New York University in Abu Dhabi, Saadiyat Island, P.O. Box 129188, Abu Dhabi, United Arab Emirates.}
\email{Tien.Nguyen@nyu.edu}

\thanks{ 
\today}
\begin{document}
\maketitle


\begin{abstract} 

We consider the parabolic-elliptic Keller-Segel system in three dimensions and higher, corresponding to the mass supercritical case. We construct rigorously a solution which blows up in finite time by having its mass concentrating near a ring that shrinks to a point. The singularity is  in particular of type II, non self-similar and resembles a traveling wave imploding at the origin in renormalized variables. We show the stability of this dynamics among spherically symmetric solutions, and to our knowledge, this is the first stability result for such phenomenon for an evolution PDE. We develop a framework to handle the interactions between the two blowup zones contributing to the mechanism: a thin inner zone around the ring where viscosity effects occur, and an outer zone where the evolution is mostly inviscid.
\end{abstract}

\section{Introduction}

\subsection{Singularity formation for the parabolic-elliptic Keller-Segel system}

This paper is concerned with the parabolic-elliptic Keller-Segel system
\begin{equation}\label{3Dsys}
\left\{\begin{array}{rl}
 \partial_t u = &\nabla \cdot(\nabla u - u \nabla \Phi_u),\\
 -\Delta \Phi_u& = u,
\end{array}
 \right. \quad \textup{in}\;\; \Rb^d.
\end{equation}
Solutions might form singularities in finite time. This is relevant in the perspective of understanding the qualitative behavior of solutions to \eqref{3Dsys}, what we describe now. This is also interesting in regard of singularity formation for other equations, and we analyze our result in this broader context in the comments after Theorem \ref{theo:1}.\\

\noindent System \eqref{3Dsys} arises in modelling biological chemotaxis processes and stellar dynamics. Here, $u(x,t)$ stands for the density of particles or cells and $\Phi_u$ is a self-interaction potential. We refer to \cite{KSjtb70}, \cite{KSjtb71a} \cite{KSjtb71b} for a derivation of a general formulation of \eqref{3Dsys} to describe the aggregation of the slime mold amoebae Dictyostelium discoideum and \cite{Warma92}, \cite{Wjam92} for the case $d=3$ as a model of stellar dynamics under friction and fluctuations. We recommend the reference \cite{Hjdmv03} where the author gives a nice survey of mathematical problems encountered in the study of \eqref{3Dsys} and a wide bibliography including references of related models. 

We recall that from standard argument, given a radial function $u_0\in L^\infty(\mathbb R^d)$, there exists a unique local in time solution to \eqref{3Dsys}, see \cite{GMSarma11} for example. We refer to \cite{B} for further results on local well-posedness in other spaces. Moreover, by a comparison argument, if $u$ blows up in finite time $T>0$, there holds the lower bound of the blowup rate
$$
\| u (t)\|_{L^\infty(\mathbb R)}\geq (T-t)^{-1}
$$
(see \cite{Ko} for other lower bounds). It is well known that the solution exists globally in time for $d=1$, see \cite{Namsa95}. The case $d=2$ is called $L^1$-critical in the sense that  the scale transformation 
\be \label{id:scaling}
\forall \lambda > 0, \quad  u_\lambda(x,t) = \frac{1}{\lambda^2}u\Big(\frac{x}{\lambda}, \frac{t}{\lambda^2} \Big), 
\ee
preserves the total mass $M = \int_{\Rb^2} u(x,t)dx = \int_{\Rb^2}u_\lambda(x,t) dx$ which is a conserved quantity for \eqref{3Dsys}. There exhibits a remarkable dichotomy:\\
- If $M < 8\pi$, Dolbeault-Perthame \cite{DPcrasp04} proved that the solution is global in time. This result was further completed and improved in \cite{BDPjde06}. The main ingredient in deriving the sharp threshold $8\pi$ for global existence is the use of the free-energy functional
\begin{equation}\label{def:Fcu}
\Fc[u](t) = 	\int_{\Rb^d}  u(x,t) \Big[\log u(x,t)  - \frac{1}{2}\Phi_u(x,t)\Big] dx,
\end{equation}
combined with the logarithmic Hardy-Littlewood-Sobolev inequality 
$$\int f(x) \log f(x) dx  + \frac{2}{M_f} \int_{\Rb^2}\int_{\Rb^2} f(x)f(y) \log|x-y| dx dy \geq - M_f(1 + \log \pi - \log M_f),$$
where $M_f = \int_{\Rb^2} f(x) dx.$\\
- If $M = 8\pi$ and the second moment is finite, i.e. $\int_{\Rb^2} |x|^2 u(x,t) dx < +\infty$, Blanchet-Carrillo-Masmoudi \cite{BCMcpam08} showed the existence
of infinite time blowup solutions to \eqref{3Dsys}. Again, the free-energy functional $\Fc$ played a crucial role in the work \cite{BCMcpam08}. Concrete examples have been constructed in \cite{GMcpam18} and \cite{DPDMWarx19} where the authors rectified the blowup law obtained in \cite{Snon07}: 
$$\|u(t)\|_{L^\infty(\Rb^2)} \sim c \ln t \quad \textup{as}\;\; t \to +\infty.$$
Certain solutions with infinite second moments converge to a fixed stationary state \cite{BCCjfa12}, with quantitative rates \cite{CaFi}.

- If $M > 8\pi$, any positive solution blows up in finite time. Indeed, the equation for the second moment
$$\frac{d}{dt}\int_{\Rb^2} |x|^2 u(x,t)dx =  4M \left( 1 - \frac{M}{8\pi}\right),$$
cannot be satisfied for all times as the right-hand side is strictly negative and the differentiated quantity is positive. Finite time blowup solutions had been predicted in \cite{Njtb73}, \cite{CPmb81}, \cite{JLtams92}. Rigorous constructions were later done by Herrero-Vel{\'a}zquez, \cite{HVma96}, \cite{Vsiam02}, Rapha\"el-Schweyer \cite{RSma14} and the present authors \cite{CGNNcpam21} where  the following blowup dynamics was confirmed:
\begin{equation}\label{stable2D}
u(t) \approx \frac{1}{\lambda^{2}(t)} U\Big( \frac{x}{\lambda(t)} \Big) \quad \textup{with}\;\; \lambda(t) = 2e^{-\frac{2+ \gamma}{2}}\sqrt{T-t}e^{-\sqrt{\frac{|\log (T-t)|}{2}}} (1 + o_{t \uparrow T}(1)),
\end{equation}
where $U(x) = 8(1 + |x|^2)^{-2}$ is stationary and satisfies $\int_{\Rb^2} U(x)dx = 8\pi$. This blowup dynamics is stable and is believed to be generic thanks to the partial classification result of Mizoguchi \cite{Mcpam20} who proved that \eqref{stable2D} is the only blowup mechanism that occurs among radial nonnegative solutions. Other blowup rates corresponding to unstable blowup dynamics were also obtained in \cite{CGNNcpam21} as a consequence of a detailed spectral analysis obtained in \cite{CGNNarx19a}.  

\bigskip

\noindent The case $d \geq 3$ is quite different from $d = 2$. The system is called mass-supercritical, and the scaling transformation \eqref{id:scaling} preserves the $L^{d/2}$-norm: $\|u_\lambda(0)\|_{L^{d/2}(\Rb^d)} = \|u(0)\|_{L^{d/2}(\Rb^d)}$. There is a critical threshold on $\| u(0)\|_{L^{d/2}}$ that distinguishes between the global existence and finite time blowup. In particular, the authors of \cite{CCEcpde12} showed that for initial data $\|u(0)\|_{L^{d/2}} < C(d)$, where $C(d)$ is related to the Gagliardo-Nirenberg inequality\footnote{Namely, $C(d)=\frac{8}{d} C_{GN}^{-2(1 + 2/d)}(\frac d2,d)$ where $C_{GN}$ is the Gagliardo-Nirenberg inequality's constant $\|v\|_{L^{ \frac{2(p+1)}{p} }} \leq C_{GN}(p,d) \|\nabla v\|_{L^2}^\frac{d}{2(p+1)} \, \|v\|_{L^2}^{1 - \frac{d}{2(p+1)}}$.}, the (weak) solution is global in time. See also \cite{CPZmjm04} and references therein for earlier results concerning the global existence for \eqref{3Dsys}. It is known that there exist finite-time blowup solutions to \eqref{3Dsys}, depending on the initial size of the solution, see for example \cite{BKZnonl15}, \cite{CPZmjm04}. Since the total mass is conserved for the solution of \eqref{3Dsys}, note then that in contrast with the two-dimensional case, solutions can blow up with any arbitrary mass 
thanks to the relation $M(u_\lambda(0)) = \lambda^{d-2}M(u(0)).$
 
We say that $u$ exhibits Type I blowup if there is a constant $C > 0$ such that 
$$\limsup_{t \to T} (T-t)\| u(t)\|_{L^\infty(\Rb^d)} < C,$$
otherwise, the blowup is said to be of Type II. This notion is motivated by the ODE $u_t = u^2$ obtained by discarding
diffusion and transport in the equivalent equation $u_t = \Delta u + u^2 - \nabla \Phi_u. \nabla u$ to the first one in \eqref{3Dsys}.

The trace at blowup time $u(T,x)=\lim_{t\uparrow T}u(t,x)$ of Type I blowup solutions is proved to be asymptotically self-similar near the origin by Giga-Mizoguchi-Senba \cite{GMSarma11}, for a class of radial nonnegative solutions. A countable family of type I self-similar blowup solutions was obtained in \cite{HMVjcam98} (see also \cite{Se}). Type II blowup solutions were formally constructed by Herrero-Medina-Vel\'azquez \cite{HMVnonl97} in the radially symmetric setting for $d = 3$. We recommend \cite{BCKSVnon99} for a nice survey and numerical observations for singularity formation in three dimensions. No rigorous detailed construction and stability analysis of blowup solutions have been known, prior to the present work, for the case $d \geq 3$.

In this paper, we construct type II finite-time blowup solutions to \eqref{3Dsys} making rigorous the formal argument of \cite{HMVnonl97}. A part of the mass of the solution is concentrated around a ring that collapses to the origin. Our result is for spherically symmetric solutions for which we show the stability of the dynamics. We introduce the profile
$$
W(\xi)=\frac 18 \cosh^{-2}\left(\frac \xi 4 \right).
$$

\begin{theorem}[Existence and stability of a collapsing-ring blowup solution to \eqref{3Dsys}]  \label{theo:1}  For any $d \geq 3$, there exists an open set of spherically symmetric functions $\Oc \subset L^{\infty}(\mathbb R^d)$ such that for any $ u_0 \in \Oc$, the solution $u$ to \eqref{3Dsys} with initial data $u(0)=u_0$ blows up with type II at time $T(u_0)>0$ and can be decomposed as 
\begin{equation}\label{eq:decomp_murt}
u(x,t) = \frac{M(t)}{R^{d-1}(t)\lambda(t)} \left[W\left(\frac{|x|-R(t)}{\lambda(t)}  \right)+\tilde u\left(x,t\right)\right],
\end{equation}
 where
\begin{equation}\label{eq:bdparametersmainth}
\lambda(t) = \frac{R(t)^{d-1}}{M(t)}, \quad M(t) =  M_\infty \big(1 + o_{t \uparrow T}(1)\big), \quad R(t) = c_d M_\infty^{\frac 1d} (T-t)^\frac{1}{d} \big(1 + o_{t \uparrow T}(1)\big),
\end{equation}
with $c_d=(\frac d2)^{\frac 1d}$ and $M_\infty (u_0)>0$, and 
\begin{equation}\label{eq:bdvarepsilonmainth}
 \| \tilde u(t)\|_{L^\infty(\mathbb R^d)} \to 0 \quad \textup{as}\;\; t \to T.
\end{equation}
Moreover, the functions $T:u_0\mapsto T(u_0)$ and $M_\infty :u_0\mapsto M_\infty(u_0)$ are continuous on $\mathcal O$.

\end{theorem}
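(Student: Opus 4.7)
The plan is to construct the blowup solutions by modulation theory, treating $R(t)$ and $M(t)$ (and hence $\lambda(t)=R(t)^{d-1}/M(t)$) as dynamical unknowns determined by orthogonality conditions, and to establish stability by a bootstrap argument combined with a Brouwer-type topological selection of the finitely many unstable directions.

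First I would reformulate \eqref{3Dsys} in the radial cumulative-mass variable $m(r,t)=\int_0^r u(\rho,t)\rho^{d-1}\,d\rho$, which linearizes the Poisson nonlinearity through the identity $-\partial_r\Phi_u = m/r^{d-1}$ up to a sphere-area constant. The resulting scalar equation for $m$ has the structure of a viscous Burgers-type equation with geometric coefficients, and the decomposition \eqref{eq:decomp_murt} corresponds to a traveling wave whose $r$-derivative reproduces the profile $W$. I would then pass to the renormalized coordinates $\xi=(r-R(t))/\lambda(t)$ and $\tau=\int_0^t\lambda^{-2}(s)\,ds$ and derive the forced evolution equation for the rescaled remainder $\tilde u$ around $W$.

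The core of the argument is a splitting of the dynamics into two matched zones. In the \emph{inner zone} $|r-R|\lesssim\lambda$, the equation reduces at leading order to a one-dimensional viscous Burgers equation for which $W$ is a soliton, and I would perform the spectral analysis of the linearized operator $\Lc_W$, identify the two zero modes produced by the $R$- and $M$-symmetries, and prove coercivity of $\Lc_W$ on the orthogonal complement in a suitable weighted $H^1$. In the \emph{outer zone} $|r-R|\gg\lambda$ viscosity is subdominant and the equation degenerates to a first-order transport-type equation, solvable by characteristics and controlled via $L^\infty$ and weighted pointwise bounds inherited from $u_0$. The modulation parameters $R(t)$, $M(t)$ are prescribed by imposing the two orthogonality conditions dual to the zero modes, producing ODEs of the schematic form $\dot R\approx -dM/(2R^{d-1})$ and $\dot M\approx 0$ which, upon integration, yield exactly the asymptotics \eqref{eq:bdparametersmainth}.

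The main obstacle, and the conceptual novelty, is handling the two-way interaction between the thin inner viscous zone and the thick outer inviscid zone, since $\lambda/R = R^{d-2}/M\to 0$ creates an increasingly singular separation of scales as $t\uparrow T$. The outer dynamics feeds the inner problem through a matching boundary condition, while inner corrections perturb the characteristics of the outer transport; designing weighted norms in both regions compatible with this coupling, and showing that the linear evolution absorbs these mutual forcings together with the quadratic nonlinearity of the original system, is the technically demanding step. Once these linear estimates are in place, a bootstrap closes the stability argument modulo a finite number of unstable directions, a Brouwer-type topological argument selects initial data in $\Oc$ so that the unstable modes stay trapped, and one then reads off the asymptotic profile, the parameter behaviour \eqref{eq:bdparametersmainth}, and the $L^\infty$ convergence \eqref{eq:bdvarepsilonmainth}. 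The continuous dependence of $T$ and $M_\infty$ on $u_0$ follows from the uniformity of these estimates together with the smooth reduction of the dynamics to an ODE system in the modulation parameters.
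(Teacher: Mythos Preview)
Your overall architecture---partial mass variable, Burgers traveling wave profile, modulation on $(R,M)$, inner/outer splitting, bootstrap---matches the paper. But there is one genuine error and a few points where your sketch diverges from what is actually needed.

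\textbf{The Brouwer argument is wrong here and would not prove the theorem.} The statement asserts an \emph{open} set $\mathcal O\subset L^\infty$, i.e.\ full stability among radial data. If finitely many unstable directions had to be killed by a topological selection, the resulting set would be finite-codimension, not open. In the paper there are no unstable directions in the bootstrap: every bound in the trapped regime (the inner weighted energy, the outer pointwise bounds, and the parameters $R,M$) is strictly improved by a factor $\tfrac12$, so the trapped set is forward-invariant and the bootstrap closes without any shooting. You should drop the Brouwer step entirely.

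\textbf{Only one genuine zero mode.} The translation symmetry gives $\partial_\xi Q$ as a zero eigenfunction of the inner linearized operator $\Ls_0$ in $L^2_{\omega_0}$. The $M$-direction, however, corresponds to $Q$ (or $Q+\xi\partial_\xi Q$), which does \emph{not} lie in $L^2_{\omega_0}$ since $Q\to 1$ and $\omega_0\sim e^{|\xi|/2}$; it is an instability only in $L^\infty$. The paper fixes $M$ not by an $L^2_{\omega_0}$-orthogonality but by a pointwise cancellation of $m_q$ near the right edge of the inner zone, which is what makes the $L^\infty$ nonlinear estimates close. Treating both as $H^1$-orthogonalities to ``zero modes'' will not work.

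\textbf{Outer zone and matching.} The outer equation still carries a small viscosity $\nu\partial_\zeta^2$, so pure characteristics do not suffice; the paper controls $\partial_\zeta m_\varepsilon$ there by parabolic comparison with explicit supersolutions of the form $e^{-\kappa\tau}\bigl(K^{5/4}e^{-\frac{3}{8}|\zeta-\zeta_\pm|/\nu}+\zeta^{d-1}\bigr)$. The real novelty, which your sketch gestures at but does not specify, is the precise placement of the overlap: the inner zone has width $|\xi|\le 4|\log\nu|+A$ (so that $\omega_0\sim\nu^{-2}e^{A/2}$ at its edge) and the outer zone begins at $|\zeta-1|=4\nu|\log\nu|$, chosen exactly so that the inner $L^2_{\omega_0}$ bound and the outer pointwise bound match. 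Two separate renormalized times ($s$ for the inner equation, $\tau$ for the outer, with $d\tau/ds=\nu\to 0$) are used; the inner decay $e^{-\kappa' s}$ is faster, so the slower outer rate $e^{-\kappa\tau}$ dictates the final bootstrap decay. Your single time $\tau=\int\lambda^{-2}$ is the inner one and would miss this hierarchy.

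Finally, a small slip: the leading modulation law is $\dot R\approx -M/(2R^{d-1})$, not $-dM/(2R^{d-1})$; integrating $R^{d-1}\dot R=-M_\infty/2$ gives $R^d=\tfrac{d}{2}M_\infty(T-t)$, recovering $c_d=(d/2)^{1/d}$.
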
  

\vspace*{0.1cm}

\begin{remark}
The collapsing ring is located at the distance $R(t)$ from the origin and has the width $\lambda (t)$. The total mass carried around the ring is $|\mathbb S^{d-1}|M(t)$, where $|\mathbb S^{d-1}|$ is the surface measure of the unit sphere in $\mathbb R^d$.
\end{remark}

\begin{remark} A detailed description of the open set $\Oc$ is given in Section \ref{sec:bootstrap}. We suspect the solution to be unstable by nonradial perturbations.
 \end{remark}

\noindent \textbf{Comments}: 
\noindent \emph{(i) Ring blowups among type II blowups and their stability.} For a general evolution equation, during a self-similar blowup all terms in the equation contribute with equal strength to the singularity. In contrast, during type II blowup as defined for most parabolic equations, a norm of the solution does not diverge according to the self-similar rate, which formally means that $\pa_t u$ is subleading as $t\uparrow T$. In the two-dimensional blowup for the Keller-Segel equation, $\pa_t u $ is fully subleading \cite{HVma96,RSma14,CGNNcpam21}, so that the profile is a stationary state. This is the most studied situation among type II blowup for evolution PDEs, see e.g. \cite{KST,RaRo,RoSt,DDW}.

The only known type II blowups where $\pa_t$ is subleading, but only after a space translation, in which case the blowup profile is a traveling wave, are the following. The seminal work \cite{MaMe} concerned the critical gKdV equation. A one-dimensional traveling wave was embedded in higher dimensions to produce a ring blowup for NLS in \cite{MRSduke14} (observed numerically in \cite{FGWphys05,FGWphys07}). However, the construction was based on a compactness argument specific to time-reversible equations, that bypasses the stability analysis and cannot be used here.

The present work gives then for the first time a stability result for a ring-blowup solution involving a traveling wave. Note that any type II blowup involves two blowup zones contributing to the singularity: an \emph{inner zone} close to the blowup profile in which $\pa_t$ is negligible, and an \emph{outer zone} where $\pa_t$ is not negligible anymore, close to the tail of the profile. The new challenges, in comparison with the most studied situation of type II blowups involving stationary states, are the following. As the profile is a traveling wave, both equations in the inner and outer zones involve transport terms, they not only change each equations separately, but also the way the two zones interact. In addition, since the original equation is only approximately one-dimensional near the ring, this generates error terms in the inner zone, while the outer zone is truly $d$-dimensional. Finally, a particularity of the present situation is that the dynamics in the outer zone is  inviscid to leading order. The novelties of our analysis to deal with these issues are explained in the strategy of the proof below. \\

\noindent \emph{(ii) Link with the Burgers equation.} In the renormalized partial mass variables for the inner zone around the ring (see \eqref{strat:def:murt} and \eqref{strat:def:xis}), the profile is $Q$ given by \eqref{strat:eq:murt} which is the traveling wave of the Burgers equation
\begin{equation} \label{eq:Burgers}
\pa_s f = \pa_\xi^2 f + \frac{1}{2}\pa_\xi(f^2).
\end{equation}
The stability of $Q$ for \eqref{eq:Burgers} was studied in \cite{IlOl,Sam76,Njjam85,KM}. The Cole-Hopf transform of \cite{Njjam85} however cannot be applied here, and the spectral method developed in \cite{Sam76} can only handle exponentially localised perturbations (in $L^2(\omega_0d\xi)$ with $\omega_0 \approx e^{|\xi|/2}$), which is not sufficient here as typical errors are only in $L^\infty$, such as the instability direction corresponding to the ring's mass variation $M_s Q$. To control the solution in the inner blowup zone, we thus develop here a method that extends the analysis of \cite{Sam76} (via the use of modulation and gluing techniques) to a broader class of perturbations.

\subsection{Ideas of the proof}

\paragraph{Ansatz:} In the partial mass variables (where $|\mathbb{S}^{d-1}|$ is the surface of the unit sphere in $\Rb^d$)
\begin{equation}\label{strat:def:murt}
m_u(r,t) = \frac{1}{|\mathbb{S}^{d-1}|}\int_{|x|\leq r} u(x, t)dx,  \qquad \quad r=|x|,
\end{equation}
the Keller-Segel system \eqref{3Dsys} for spherically symmetric solutions becomes
\begin{equation}\label{strat:eq:murt}
\pa_t m_u = \pa_r^2 m_u -\frac{d-1}{r}\pa_r m_u + \frac{m_u \pa_r m_u}{r^{d-1}}, \qquad r \in \Rb_+.
\end{equation}
The solutions of Theorem \ref{theo:1} correspond to solutions of the form
\be \label{strategy:mudecomp}
m_u(r,t)= M(t)Q\left(\frac{r-R(t)}{\lambda(t)} \right)+\tilde m_u(r,t), \qquad Q(\xi)=\frac{e^{\xi/2}}{1+e^{\xi/2}},
\ee
with $Q$ the \emph{traveling wave} of the viscous Burgers equation $\pa_s f=\pa_{\xi \xi }f+f\pa_\xi f$, and $\lambda=M^{-1}R^{d-1}$. Our aim is to construct a solution to \eqref{strat:eq:murt} of the form \eqref{strategy:mudecomp} with $M(t)\to M_\infty$, $R(t)\sim c_d M_\infty^{1/d}(T-t)^{1/d}$ and $\tilde m_u(t)\to 0$ as $t\uparrow T$.

\vspace{0.2cm}

\paragraph{Inner blowup zone:} We define the inner blowup variables as
\begin{equation}\label{strat:def:xis}
 \xi=\frac{r-R}{\lambda}, \qquad s=s_0+\int_0^t \frac{d\tilde t}{\lambda}, \qquad m_u(r,t)=M(t)\big[Q(\xi)+m_q(s,\xi)\big].
\end{equation}
We call the \emph{inner blowup zone} the set $\{\xi_{A,-}\leq \xi \leq \xi_{A,+}\}$ for $\xi_{A,+}=-\xi_{A,-}\gg 1$ to be fixed suitably below, and introduce $m_q^\inn=\chi^\inn m_q$ for some cut-off $\chi^\inn $ localising in the inner blowup zone. It solves (see \eqref{eq:mqxis}):
\be \label{strat:eqmq}
\pa_s m_q^\inn=\Ls_0 (m_q^\inn) +(\frac{R_\tau}{R}+\frac{1}{2}) \pa_\xi Q-\frac{M_s}{M}( Q+\xi \pa_\xi Q) +\tilde \Psi + [\pa_s -\Ls_0,\chi^\inn]m_q+ h.o.t,
\ee
where $\Ls_0 = \partial_\xi^2 - \left(1/2- Q\right)\partial_\xi + \pa_\xi Q$, $\Psi$ is the error term generated by $Q$, $[\cdot,\cdot]$ is the commutator and $ [\pa_s -\Ls_0,\chi^\inn]m_q$ are the boundary terms, and $h.o.t$ denotes higher order linear terms and nonlinear terms.

In Proposition \ref{pr:spectralL0}, we recall that the operator $\Ls_0$ is self-adjoint in $L^2(\omega_0d\xi)$ where $\omega_0(\xi)=Q^{-1} e^{\xi/2}$. It has a \emph{spectral gap} on functions such that $\int_{\mathbb R}m_q^\inn \pa_\xi Q \omega_0 d\xi=0$ resulting in exponential decay for the linear evolution:
\be \label{strat:decayinn}
\| e^{s\Ls_0} (m_q^\inn) \|_{L^2(\omega_0d\xi)}\leq e^{-\kappa' s }\|  m_q^\inn \|_{L^2(\omega_0d\xi)}.
\ee

\paragraph{Outer blowup zone:} We define the outer blowup variables
$$
\zeta=\frac{r}{R}=1+\nu \xi, \qquad \nu =\frac{R^{d-1}}{M}, \qquad \tau=\tau_0+\int_0^t \frac{M}{R^d}d\tilde t, \qquad m_\varepsilon(\tau,\zeta)=m_q(s,\xi),
$$
so that the concentrating ring is located at $\zeta=1$, and the \emph{outer blowup zone} as the set $\{\zeta<\zeta_-\}\cup \{\zeta > \zeta_+\}$ for $|\zeta_{\pm}-1|\gg \nu$ to be fixed suitably below. Then $m_\ep$ solves\footnote{Note that, comparing with \eqref{strat:eqmq}, the term corresponding to $(\frac{R_\tau}{R}+\frac{1}{2}) \pa_\xi Q$ has been incorporated in the $h.o.t.$ in \eqref{strat:eqmep} due to the decay $|\pa_\xi Q|\lesssim e^{-|\xi|/2}$.} (see \eqref{eq:mep}) the equation
\be \label{strat:eqmep}
\pa_\tau m_\ep = \As m_\ep   - \frac{M_\tau}{M} Q_\nu+\bar \Psi+h.o.t,  \qquad \textup{for}\quad  \zeta \geq \zeta_+,
\ee
where $\As = \left( \zeta^{1-d} - \zeta/2 \right) \pa_\zeta   + \nu \pa_\zeta^2$ and $Q_\nu (\zeta)=Q(\xi)$. There holds a similar equation for $\zeta\leq \zeta_-$. Equation \eqref{strat:eqmep} \emph{dampens derivatives}, as $\pa_\zeta Q_\nu \approx 0$ for $\zeta\geq \zeta_+$ and $\pa_\zeta m_\ep$ solves (see \eqref{eq:mep1})
\begin{equation}\label{strat:eq:mep1}
\pa_\tau (\pa_\zeta m_{\ep})= \As_1(\pa_\zeta m_{\ep}) +\pa_\zeta \bar \Psi+h.o.t. \qquad \textup{for}\quad  \zeta \geq \zeta_+,
\end{equation}
where $\As_1= -( (d-1)\zeta^{-d} +1/2 )+\As$ displays exponential decay
\be \label{strat:decayout}
\| e^{\tau \As_1} (\pa_\zeta m_\varepsilon^\inn )\|_{L^\infty }\leq e^{-\kappa \tau }\| \pa_\zeta m_\varepsilon^\inn \|_{L^\infty }.
\ee

\paragraph{Gluing inner and outer zones:}

\underline{Choosing $\zeta_{\pm}$.} The two time scales are such that $\tau\ll s$, and the slowest linear decay between \eqref{strat:decayinn} and \eqref{strat:decayout} is $\max(e^{-\kappa's},e^{-\kappa \tau})=e^{-\kappa \tau}$. As the outer zone with the slower a priori decay \eqref{strat:decayout} interacts with the inner zone via the boundary terms in \eqref{strat:eqmq}, we thus relax \eqref{strat:decayinn} and actually show in Lemma \ref{lemm:mqinn} that the solution to \eqref{strat:eqmq} satisfies the \emph{energy estimate}
\be \label{strat:decayinn2}
\| m_q(s)\|_{\inn}\leq K e^{-\kappa \tau},
\ee
where $\| m_q\|_\inn=-\int m_q^\inn \Ls_0 m_q^\inn \omega_0d\xi$ is a coercive functional, see Lemma \ref{lemm:coerLs0}. Since $\omega_0(\xi) \approx e^{|\xi|/2}$, after applying parabolic regularization (Lemma \ref{lem:rightboundary}), we prove that the weighted $L^2$ bound \eqref{strat:decayinn2} implies the pointwise bound for the derivative $|\pa_\xi m_q |\lesssim K e^{-|\xi|/4}e^{-\kappa \tau}$, so that $|\pa_\zeta m_\varepsilon| \lesssim K\nu^{-1}e^{-|\xi|/4}e^{-\kappa \tau}$ as $\pa_\xi=\nu \pa_\zeta$. This later estimate matches with \eqref{strat:decayout} precisely for $\nu^{-1}e^{-|\xi|/4}=1$ corresponding to the choice
$$
\zeta_{\pm}= 1\pm 4\nu |\log \nu|.
$$
\underline{Choosing $\xi_{A,\pm}$.} The transport field $ \left( \zeta^{1-d} - \zeta/2 \right) \pa_\zeta$ in \eqref{strat:eqmep} pushes \emph{from the outer blowup zone toward the inner zone}. Thus, the farther from the inner zone, the smaller the effects of boundary terms should be. This is made rigorous in Lemma \ref{lemm:mep_mid} where we prove
\be \label{strat:decayout2}
|\pa_\zeta m_\ep | \leq K^{5/4} \phi_1+ \phi_2 \qquad \zeta\geq \zeta_+, \qquad \textup{with}\;\; \phi_1 =  e^{-\kappa \tau}e^{\frac 38 \frac{\zeta-\zeta_+}{\nu}} \mbox{ and } \phi_2=e^{-\kappa \tau}\zeta^{d-1}
\ee
by \emph{parabolic comparison principle}, and a similar estimate for $\zeta\leq \zeta_-$ holds. The supersolution $\phi_1$ takes care of the boundary condition $\pa_\zeta m_\ep (\zeta_+)$ imposed by the inner zone including the viscosity effect. By choosing
$$
\xi_{A,\pm}= \pm (4|\log \nu|+A),
$$
where $A\gg 1 $ is such that $e^{3A/10}\leq K\leq e^{A/2}$, we show in the proof of Lemma \ref{lemm:mqinn} that \eqref{strat:decayout2} implies
$$
\| [\pa_s -\Ls_0,\chi^\inn]m_q\|_{L^2(\omega_0d\xi)}\lesssim (K^{1/4}e^{-A/8}) K e^{-\kappa \tau}\ll K e^{-\kappa \tau},
$$
for the boundary terms in \eqref{strat:eqmq}, which is compatible with \eqref{strat:decayinn2}.

Note that the inner and outer blowup zones \emph{overlap} in $\{\xi_{A,-}\leq \xi \leq \xi_-\}\cup \{\xi_{+}\leq \xi \leq \xi_{A,+}\}$, where we obtain a delay-type estimate for the associated parabolic tranport equations.

\vspace{0.2cm}

\paragraph{Nonlinear analysis:} To handle nonlinear effects, the solution is controlled in a bootstrap regime, see Definition \ref{def:bootstrap}. The parameters $R$ and $M$, that are related to instability directions around the approximate solution, are determined dynamically from \eqref{strat:eqmq} by requiring the orthogonality $\int_{\mathbb R}  m_q^\inn \pa_\xi Q\omega_0 d\xi=0$ and the cancellation $\int_{\xi_{A,+}}^{\xi_{A,+}+1} m_q^\inn d\xi=0 $ respectively. This yields the dynamical system \eqref{eq:ModR}-\eqref{eq:ModM} driving the blowup. The other nonlinear terms are treated perturbatively using Sobolev-type estimates.

\subsection{Aknowledgments}

The work of C. Collot was funded by CY Initiative of Excellence (Grant "Investissements d'Avenir" ANR-16-IDEX-0008). T. E. Ghoul, N. Masmoudi and V.T. Nguyen are supported by Tamkeen under the NYU Abu Dhabi Research Institute grant of the center SITE. N. Masmoudi is supported by NSF grant DMS-1716466.

\section{Formulation of the problem} \label{sec:3}

\noindent We will work in the partial mass variables (with $|\mathbb{S}^{d-1}|$ the surface of the unit sphere in $\Rb^d$)
\begin{equation}\label{def:murt}
m_u(r,t) = \frac{1}{|\mathbb{S}^{d-1}|}\int_{|x|\leq r} u(x, t)dx,  \qquad \quad r=|x|,
\end{equation}
in which the Keller-Segel system \eqref{3Dsys} for spherically symmetric solutions becomes:
\begin{equation}\label{eq:murt}
\pa_t m_u = \pa_r^2 m_u -\frac{d-1}{r}\pa_r m_u + \frac{m_u \pa_r m_u}{r^{d-1}}, \qquad r \in \Rb_+.
\end{equation}

\subsection{Renormalised variables} 
\subsubsection{Hyperbolic inviscid variables} These variables describe the solution away from the ring $|r-R(t)|\gg \lambda(t)$, where nonlinear transport is dominant and viscosity effects are negligible. For $R(t)$ and $M(t)$ two positive $\mathcal{C}^1$ functions we define the parameters
\begin{equation}\label{eq:relRMnu}
\nu = \frac{R^{d-2}}{M}, \qquad \lambda=R\nu,
\end{equation}
so that
$$
\nu_\tau=-\frac{d-2}{2}\nu+(d-2)\left(\frac{R_\tau}{R}+\frac 12 \right)\nu-\frac{M_\tau}{M}\nu,
$$
and the renormalization
\begin{equation} \label{def:mw}
m(r,t) = M(t)m_w(\zeta, \tau), \qquad \zeta=\frac{r}{R}, \qquad \tau=\tau_0+\int_0^t \frac{M(\tilde t)}{R^d(\tilde t)}d\tilde t.
\end{equation}
The new unknown $m_w(\zeta,\tau)$ satisfies for $\zeta> 0$ and $\tau \geq \tau_0$,
\begin{align}
\partial_\tau m_w =  \left(\frac{m_w}{\zeta^{d-1}}-\frac{1}{2}\zeta\right) \pa_\zeta m_w +   \nu \zeta^{d-1}\pa_\zeta \left( \frac{\pa_\zeta m_w}{\zeta^{d-1}} \right) + \left(\frac{R_\tau}{R}+\frac 12 \right)\zeta \pa_\zeta m_w -\frac{M_\tau}{M}m_w. \label{eq:phisfrsft}
\end{align}

\begin{remark} \label{re:shock}

We shall prove that $\nu$ goes to zero as $\tau \to \infty$. Then, we notice that with the special choice
$$m_w = \mathbf{1}_{\{\zeta \geq 1\}}, \quad \frac{R_\tau}{R} = -\frac{1}{2}, \quad M_\tau = 0,$$
the inviscid equation \eqref{eq:phisfrsft}, i.e. with $\nu = 0$, is solved both sides of the discontinuous point $\zeta = 1$. The Rankine-Hugoniot condition
\be \label{id:rankinehugoniot}
\frac{1}{2}\left[\lim_{\zeta \to 1^+} \left( \frac{\mathbf{1}_{\{\zeta \geq 1\}}}{\zeta^{d-1}} - \frac{1}{2}\zeta   \right) + \lim_{\zeta \to 1^-} \left( \frac{\mathbf{1}_{\{\zeta \geq 1\}}}{\zeta^{d-1}} - \frac{1}{2}\zeta   \right) \right] = 0,
\ee
asserts that the discontinuous point $\zeta = 1$ is steady so that $\mathbf{1}_{\{\zeta \geq 1\}}$ defines a stationary solution for the limiting inviscid equation. The function ${\bf 1}_{\{\zeta \geq 1\}}$ will be the blow-up profile in the hyperbolic inviscid variables \eqref{def:mw}.

\end{remark}

\subsubsection{Blowup variables inside the ring} To have a better description near the shock location $\zeta = 1$ (the appearance of a shock being explained in Remark \eqref{re:shock}) we change variables
\begin{equation}\label{def:xis}
m_w(\zeta,\tau)=m_v(\xi,s),\qquad \xi=\frac{\zeta-1}{\nu}=\frac{r-R}{R\nu}, \qquad s=s_0+\int_{\tau_0}^\tau \frac{d\tau}{\nu}.
\end{equation}
Then $m_v$ solves the following equation for $\xi>-\nicefrac{1}{\nu}$ and $s\geq s_0$:
\begin{align}
\pa_s m_v &=\pa_\xi^2 m_v +m_v\pa_\xi m_v-\frac{1}{2}\pa_\xi m_v+\left(\frac{R_\tau}{R}+\frac{1}{2} \right)\pa_\xi m_v-\frac{M_s}{M}m_v\nonumber \\
&\qquad +\left(\frac{1}{(1+\xi\nu)^{d-1}}-1 \right)m_v \pa_\xi m_v -\nu \frac{d-1}{1+\nu\xi}\pa_\xi m_v. \label{eq:vxis} \\
\nonumber & \qquad \quad + \left( (d-1)\nu \left(\frac{R_\tau}{R}+\frac 12\right)-\frac{d-1}{2}\nu-\frac{M_s}{M} \right) \xi \pa_\xi m_v
\end{align}
As we expect $R_\tau\sim -R/2$, $M_s\approx 0$ and $\nu \to 0$, we introduce the blowup profile $Q$ near the shock that cancels out the leading part in \eqref{eq:vxis}, namely $Q$ solves the ODE
\begin{equation}\label{eq:Q}
\pa_\xi^2 Q - \frac 12 \pa_\xi Q + Q\pa_\xi Q = 0, \quad \lim_{y \to -\infty} Q(\xi) = 0,
\end{equation}
whose exact solution is given by 
\begin{equation}\label{def:Qxi}
Q(\xi) =  e^{\frac \xi2}\big(1 + e^{\frac \xi2}\big)^{-1}, \quad \pa_\xi Q(\xi) =\frac{1}{8}\cosh^{-2}(\frac \xi 4).
\end{equation}

\begin{remark} \label{re:burgers}

Keeping only the leading order terms in \eqref{eq:vxis} gives the Burgers equation $\pa_s f=\pa_\xi^2 f +f\pa_\xi f-\frac{1}{2}\pa_\xi f$, for which $Q$ is a traveling wave, since $f(\tau,\xi)=Q(\xi+\tau/2)
$ is an exact solution. It travels at speed $-1/2$ which equals the speed of the shock determined from the Rankine-Hugoniot condition \eqref{id:rankinehugoniot}.

\end{remark}

\subsection{Linearized problems}
\subsubsection{The profile} For a fixed $0 < \zeta_0 \ll 1$, we introduce $\bar \chi$ a smooth nonnegative cut-off function with
\begin{equation}\label{def:barchi}
\bar \chi(\zeta) = \left\{ \begin{array}{ll}0 & \textup{if}\;\; \zeta \in [0, \zeta_0],\\
1 & \textup{if}\;\; \zeta \in [2\zeta_0, \infty).
\end{array} \right.
\end{equation} 
We introduce the notation for the rescaled and localised profiles:
\begin{equation}\label{def:Qbar}
 Q_\nu(\zeta) = Q(\xi), \quad \quad \bar Q_\nu(\zeta)= Q_\nu(\zeta) \bar \chi(\zeta) \quad \textup{and} \quad \bar Q(\xi) = \bar Q_\nu(\zeta).
\end{equation} 
The introduction of the localised profile $\bar Q_\nu$ is technical, to deal with the singular nonlinear term at the origin. By the definition of $\bar \chi$, we note that 
\begin{equation*}
\bar Q(\xi) = 0 \quad \textup{for}\quad \xi \in \left[-\frac{1}{\nu}, -\frac{(1 - \zeta_0)}{\nu}\right], \quad \bar Q(\xi) = Q(\xi) \quad \textup{for} \quad \xi \geq -\frac{(1 - 2\zeta_ 0)}{\nu}.
\end{equation*}

 \subsubsection{Linearized equation in the partial mass setting} We introduce the decomposition in hyperbolic inviscid variables \eqref{def:mw}
\begin{equation}\label{def:mepmq}
m_w(\zeta) =   \bar Q_\nu(\zeta)+m_\ep(\zeta, \tau).
\end{equation}
The perturbation $m_\ep$ then solves the equation for $\zeta > 0$ and $\tau \geq \tau_0$:
\begin{align}
\pa_\tau m_\ep = \frac{\pa_\zeta(\bar Q_\nu m_\ep) }{\zeta^{d-1}} - \frac{1}{2}\zeta \pa_\zeta m_\ep &+\nu \left( \pa_\zeta^2 m_\ep - \frac{d-1}{\zeta} \pa_\zeta m_\ep\right) + \frac{m_\ep \pa_\zeta m_\ep}{\zeta^{d-1}} \nonumber\\
& + \left(\frac{R_\tau}{R} + \frac{1}{2} \right)\zeta \pa_\zeta m_\ep - \frac{M_\tau}{M}  m_\ep + m_E, \label{eq:mep0}
\end{align}
where the generated error is
\begin{align}
m_E = -\pa_\tau \bar Q_\nu + \frac{\bar Q_\nu \pa_\zeta \bar Q(\zeta)}{\zeta^{d-1}} - \frac{1}{2}\zeta \pa_\zeta \bar Q_\nu &+ \nu \left(\pa_\zeta^2 \bar Q_\nu - \frac{d-1}{\zeta}\pa_\zeta \bar Q_\nu \right) \nonumber\\
& +\left(\frac{R_\tau}{R} + \frac{1}{2} \right) \zeta \pa_\zeta \bar Q_\nu - \frac{M_\tau}{M}\bar Q_\nu. \label{def:mE0}
\end{align}

In the ring, in the blowup variables \eqref{def:xis}, we introduce the decomposition
\begin{equation} \label{def:mq}
m_q(\xi,s) = m_v(\xi,s) - \bar Q(\xi),
\end{equation}
that leads to the following linearized equation for $\xi > -\nicefrac{1}{\nu}$ and $s\geq s_0$, 
\begin{equation}\label{eq:mqxis}
\pa_s m_q=\Ls_0 (m_q) + L(m_q) + \frac{m_q \partial_\xi m_q}{(1 +\nu \xi )^{d-1}} + \Psi,
\end{equation}
subject to the boundary condition\footnote{Note that this boundary condition is propagated with time since we consider solutions $u$ to \eqref{3Dsys} that are in $L^\infty(\mathbb R^d)$, so that $m_u(r)=O(r^d)$ as $r\to 0$ using \eqref{def:murt}.} 
$$m_q(s,-\nicefrac{1}{\nu})=0.$$
Above, the elliptic linearized operator is defined as
\begin{equation}\label{def:Lop}
\Ls_0 = \partial_\xi^2 - \left(\frac{1}{2} - Q\right)\partial_\xi + Q',
\end{equation}
the lower order linear term is
\begin{align}
\nonumber L(m_q)&= -(d-1)\nu \left(\frac 12 \xi +\frac{1}{1+\nu \xi} \right) \pa_\xi m_q + \left(\frac{R_\tau}{R}+\frac{1}{2} \right)\left(1+(d-1)\nu \xi \right)\pa_\xi m_q-\frac{M_s}{M}\left(m_q+\xi\pa_\xi m_q\right)\\
\nonumber &\qquad +\left(\frac{1}{(1+\xi\nu)^{d-1}}-1 \right)\bar Q \pa_\xi m_q+\left(\frac{1}{(1+\xi\nu)^{d-1}}-1 \right)m_q\pa_\xi \bar Q\\
& \qquad + \pa_\xi (\bar Q - Q) m_q + (\bar Q - Q)\pa_\xi m_q,
\label{def:Lep}
\end{align}
and the generated error is given by
\begin{align}
\Psi(\xi,s) &=  \left(\frac{R_\tau}{R}+\frac{1}{2} \right)\left(1+(d-1)\nu \xi \right)\pa_\xi \bar Q-\frac{M_s}{M}\left(\bar Q+\xi \pa_\xi \bar Q\right)  -(d-1)\nu \left(\frac 12 \xi +\frac{1}{1+\nu \xi} \right) \pa_\xi \bar Q \nonumber\\
&\qquad +\left(\frac{1}{(1+\xi\nu)^{d-1}}-1 \right)\bar Q \pa_\xi \bar Q -\nu \frac{d-1}{1+\nu\xi}\pa_\xi \bar Q \nonumber\\
& \qquad \quad + \nu \pa_\zeta \bar \chi \Big( 2\pa_\xi Q - \frac{1}{2}Q + Q^2 \bar \chi\Big) + \nu^2\pa_{\zeta}^2 \bar \chi Q + Q \pa_\xi Q \bar \chi \big(\bar \chi - 1\big)-\nu_s \xi \pa_\zeta \bar \chi Q. \label{def:Psi}
\end{align}
\subsubsection{Evaluation of the parameters} The parameter functions $R$ and $M$ are determined via the  "orthogonality" conditions:
\begin{equation} \label{orthogonality2}
\int_{-1/\nu}^\infty \chi_{_A}(\xi) m_q(s,\xi)d\xi=2 \int_{-1/\nu}^\infty \chi_{_A}(\xi) m_q(s,\xi) \pa_\xi Q(\xi) \omega_0(\xi)d\xi=0,
\end{equation}
and 
\begin{equation} \label{orthCon2}
\int_{-1/\nu}^\infty m_q(s,\xi) \chi_{_{1,\; \xi_{A,+}} }(\xi) d\xi =0,
\end{equation}
where $\xi_{A,+}$ is defined in \eqref{def:xipm}, and for any positive constants $A$ and $a$, $\chi_{_A}$ and $\chi_{_{A, a}}$ are cut-off functions defined by 
\begin{equation}\label{def:chiA}
 \chi_{_{A, a}}(\xi) = \chi_0\Big( \frac{x - a}{A}\Big), \quad \chi_{_A}(\xi) = \chi_{_{A,0}}(\xi), 
\end{equation}
where $\chi_0$ is smooth and nonnegative, and satisfies
\begin{equation*}
\chi_0 \in \Cc^\infty(\Rb), \quad  \chi_0(x) =  \left\{ \begin{array}{l} 0 \; \textup{if} \; |x| \geq 2,\\
1 \; \textup{if}\; |x| \leq 1,\\
\end{array}\right.
\end{equation*}
and $\omega_0$ is the weight function
\begin{equation} \label{def:omega}
\omega_0(\xi) = \left(e^{\frac{\xi}{4}}+e^{-\frac{\xi}{4}} \right)^2=\frac{1}{2 \pa_\xi Q(\xi)} .
\end{equation}
\begin{remark} The orthogonality condition \eqref{orthogonality2} ensures a coercivity estimate for the linearized operator $\Ls_0$ as stated in Lemma \ref{lemm:coerLs0}.  By the mean value theorem the second condition \eqref{orthCon2} implies that there exists a point $\xi_* \in \big(\xi_{A,+}-2, \xi_{A,+}+2\big)$ where $\xi_{A,+}$ is defined in \eqref{def:xipm} such that $m_q(\xi_*,s) = 0$. This allows us to write
\begin{equation}\label{est:mqbydev}
m_q(\xi, s) = \int_{\xi_*}^\xi \partial_\xi m_q(\xi, s) d\xi, \quad \textup{hence,}\quad  |m_q(\xi, s)| \leq |\xi - \xi^*| \| \pa_\xi m_q( s)\|_{L^\infty (\xi_*,\xi)}. 
\end{equation}
\end{remark}
%

\subsection{The linearized operator around the Burgers traveling wave}

The linearized operator $\Ls_0$ appears in the study of stability of traveling wave solutions to the viscous Burger equation. Its properties are thus well-known. We define for $k\in \mathbb N$ the weighted Sobolev space $H^k_{\omega_0}$ associated to the norm
$$
\| m\|_{H^k_{\omega_0}}^2=\sum_{j=0}^k \int_{\mathbb R} (\pa_\xi^j u)^2\omega_0(\xi)d\xi .
$$

\begin{proposition} \label{pr:spectralL0}

The operator $\Ls_0$, with domain $H^2_{\omega_0}$, is self-adjoint on $L^2_{\omega_0}(\mathbb{R})$. Its spectrum consists of an isolated eigenvalue which is $0$ associated to the eigenfunction $\pa_\xi Q$, and of the interval $(-\infty,-1/16]$.

\end{proposition}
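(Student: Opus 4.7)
The plan is to show self-adjointness via a weighted integration-by-parts identity, then determine the spectrum by conjugating $\Ls_0$ unitarily to a Schr\"odinger operator with a P\"oschl--Teller potential, whose spectrum is classical.

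First, I would establish the key symmetry relation $\omega_0'/\omega_0 = Q - 1/2$, which follows directly from $\omega_0 = 4\cosh^2(\xi/4)$ and $Q - 1/2 = (1/2)\tanh(\xi/4)$. This identity puts $\Ls_0$ in divergence form, $\Ls_0 f = \omega_0^{-1}\pa_\xi(\omega_0 \pa_\xi f) + Q'f$, so that on $C_c^\infty(\Rb)$ integration by parts yields the manifestly symmetric quadratic form
\begin{equation*}
\langle \Ls_0 f, g\rangle_{L^2_{\omega_0}} = -\int_\Rb f'g'\,\omega_0\, d\xi + \int_\Rb Q'fg\,\omega_0\, d\xi.
\end{equation*}
Since $Q'$ is bounded and $\omega_0\gtrsim e^{|\xi|/2}$ at infinity, this form is semibounded, and either Weyl limit-point theory or the Friedrichs extension yields essential self-adjointness of $\Ls_0$ on $C_c^\infty$, with closure having domain $H^2_{\omega_0}$.

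Next, I would diagonalize using the unitary $U : L^2_{\omega_0}\to L^2(\Rb)$ defined by $Uf=\sqrt{\omega_0}\,f$. A direct computation, in which the first-order drift term cancels exactly thanks to $\omega_0''/\omega_0 = Q'+(Q-1/2)^2$ (obtained by differentiating $\omega_0'/\omega_0=Q-1/2$), gives
\begin{equation*}
U\Ls_0 U^{-1} = \pa_\xi^2 - V(\xi), \qquad V(\xi) = \frac{(Q-1/2)^2}{4} - \frac{Q'}{2}.
\end{equation*}
Substituting the explicit formulas for $Q-1/2$ and $Q'$, and using $\tanh^2-\operatorname{sech}^2 = 1-2\operatorname{sech}^2$, reduces the potential to the clean form $V(\xi)=\tfrac{1}{16}-\tfrac{1}{8}\operatorname{sech}^2(\xi/4)$.

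Finally, the rescaling $\eta=\xi/4$ (implemented by a further unitary map) transforms $-U\Ls_0 U^{-1}$ into $\tfrac{1}{16}\bigl(H_{\mathrm{PT}}+1\bigr)$, where $H_{\mathrm{PT}} = -\pa_\eta^2-2\operatorname{sech}^2(\eta)$ is the classical reflectionless P\"oschl--Teller Hamiltonian (the $n=1$ case). Its spectrum is the single simple eigenvalue $-1$ (with ground state $\operatorname{sech}(\eta)$) together with essential spectrum $[0,\infty)$; a one-line proof comes from the supersymmetric factorization $H_{\mathrm{PT}}+1 = (-\pa_\eta+\tanh\eta)(\pa_\eta+\tanh\eta)$, whose kernel is spanned by $\operatorname{sech}(\eta)$ and whose partner $(\pa_\eta+\tanh\eta)(-\pa_\eta+\tanh\eta) = -\pa_\eta^2+1$ has purely essential spectrum $[1,\infty)$. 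Pulling back yields $\sigma(\Ls_0)=\{0\}\cup(-\infty,-1/16]$, and the zero eigenfunction transforms to $U^{-1}\operatorname{sech}(\xi/4)\propto\operatorname{sech}^2(\xi/4)\propto\pa_\xi Q$, which is also directly visible by differentiating the Burgers traveling-wave ODE $Q''-(1/2)Q'+QQ'=0$. The only subtle step is the algebra behind the conjugation identity, where cross terms involving $(\omega_0^{-1/2})'$ and $(\omega_0^{-1/2})''$ must be tracked carefully; everything else is either standard self-adjoint theory or textbook P\"oschl--Teller spectral analysis.
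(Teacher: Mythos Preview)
Your proof is correct and cleaner than the paper's. Both arguments begin with the same unitary conjugation: the paper writes $\Ls_0=e^{B_0}\Ms_0 e^{-B_0}$ with $B_0'=\tfrac14-\tfrac{Q}{2}$, which is exactly your $U=\sqrt{\omega_0}$ since $(\sqrt{\omega_0})'/\sqrt{\omega_0}=\tfrac12(Q-\tfrac12)=-B_0'$, and the resulting Schr\"odinger operator $\Ms_0=\pa_\xi^2+Q'-\tfrac{1}{16}$ coincides with your $\pa_\xi^2-V$ once one notes $Q'=\tfrac18\operatorname{sech}^2(\xi/4)$. The divergence is in how the spectrum of $\Ms_0$ is analyzed. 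The paper does not exploit the P\"oschl--Teller structure: it invokes Weyl's theorem (compact perturbation of $\pa_\xi^2-\tfrac{1}{16}$) for the essential spectrum, a Sturm--Liouville positivity argument to exclude positive eigenvalues, and then a separate contradiction argument---using an explicit heat-kernel representation of $e^{s\Ls_0}$ due to Nishihara (a Cole--Hopf-type identity)---to rule out eigenvalues in the gap $(-\tfrac{1}{16},0)$. Your recognition that $\Ms_0$ is (after rescaling) the $n=1$ reflectionless P\"oschl--Teller operator, together with the supersymmetric factorization $H_{\mathrm{PT}}+1=A^*A$ with partner $AA^*=-\pa_\eta^2+1$, delivers the full spectrum in a single stroke and bypasses all three of those steps. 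The paper's route has the virtue of being more robust to perturbations of the potential (it only needs exponential decay of $Q'$), whereas your argument relies on the exact solvability; but for the statement at hand your approach is both shorter and more transparent.
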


\begin{proof} Proposition \ref{pr:spectralL0} is obtained in \cite{Sam76}, but one argument in the proof contains an error that can be corrected. We thus give a proof here for sake of completeness and mention where we correct the error using an identity of \cite{Njjam85} related to the Cole-Hopf transformation.

Equation \eqref{eq:Q} is invariant by space translation, hence the function $\pa_\xi Q$ satisfies $\Ls_0 \pa_\xi Q=0$. It belongs to $L^2_{\omega_0}$ since $|\pa_\xi Q(\xi)|\lesssim e^{-|\xi|/2}$ and $\omega_0\approx e^{|\xi|/2}$. Standard ODE arguments show that any other solution to $\Ls_0 m=0$ that is non collinear to $\pa_\xi Q$ has nonzero finite limits as $\xi \rightarrow \pm \infty$, preventing them to belong to $L^2_{\omega_0}$. Hence $\pa_\xi Q$ spans the kernel of $\Ls_0$ in $L^2_{\omega_0}$.

The eigenfunction $\pa_\xi Q$ associated to $0$ is positive on $\mathbb R$. A Sturm-Liouville argument (see \cite{Sam76}) then implies that $\Ls_0$ has no positive eigenvalues.

To study the rest of the spectrum, it is observed in \cite{Sam76} that $\Ls_0$ can be written under the following conjugated form as \footnote{Any operator of the form $\Lc = \partial_y^2 - 2b \partial_y + c$ can be written as $\Lc = e^B \mathcal{M} e^{-B}$, where $B(y) = \int_0^y b(\xi) d\xi$ and $\mathcal{M} = \partial_y^2 + [b' - b^2 + c]$. A similar formulation holds for the  higher dimensional case, namely that $\Lc = \Delta - 2b.\nabla +c$ can be written as $\Lc = e^{B} \mathcal{M} e^{-B}$ with $\nabla B = b$ and $\mathcal{M} = \Delta + [\Delta B - |\nabla B|^2 + c]$.}
\begin{equation}
\Ls_0 = e^{B_0} \Ms_0 e^{-B_0} \quad \textup{with} \quad B_0(\xi) = \int_0^\xi b_0(\tilde \xi) d\tilde \xi, \quad  b_0 = \frac{1}{4} - \frac{Q}{2}.
\end{equation}
A mistake was made in \cite{Sam76} in the computation of $\Ms_0$, and the correct operator is given by
\begin{equation}
\Ms_0 = \partial^2_\xi  +\left[ \frac{1}{2} \frac{Q}{(1 + e^{\nicefrac{\xi}{2}})}- \frac{1}{16} \right].
\end{equation}
The operator $\Ms_0$ on $L^2(\mathbb R)$ (with domain $H^2(\mathbb R)$) has continuous spectrum in the interval $-\infty < \lambda \leq -\frac{1}{16}$, since it is a compact perturbation of $\pa_{\xi}^2-\frac{1}{16}$. Hence, we deduce that $\Ls_0$ has the same continuous spectrum $(-\infty,-1/16]$. It remains to show that there are no eigenvalues in $(-1/16,0)$. We give a different argument from that in \cite{Sam76} which relied on the aforementioned erroneous identity of $ \Ms_0$. Assume by contradiction that there exists $c\in (0,1/16)$ and $\psi \in H^2_{\omega_0}$ such that $\Ls_0 \psi=-c\psi$. Since $\Ls_0$ is self-adjoint in $L^2_{\omega_0}$ and $2\pa_\xi Q=\omega_0^{-1}$ is another eigenfunction
\be \label{spectral:zeromean}
\int_{\mathbb R} \psi=2\int_{\mathbb R} \psi \pa_\xi Q \omega_0 =0.
\ee
We claim moreover that for some $C>0$, there holds
\be \label{spectral:claim}
|\psi(\xi)|\leq Ce^{-\mu |\xi|}, \qquad \mu=\frac 14 (1+\sqrt{1-16 c}),
\ee
whose proof is done shortly after. Letting $ \phi(\xi)=\int_{0}^\xi \psi(\eta)d\eta$, by \cite[Theorem 2]{Njjam85} we have
$$
(e^{s \Ls_0} \psi)(\xi)=\int_{\mathbb R} \pa_\xi \tilde \Gamma(\xi,s,\eta) \phi(\eta)d\eta, \qquad \tilde \Gamma (\xi,s,\eta)= e^{-\frac{s}{16}} \frac{e^{-\frac{(\xi-\eta)^2}{4s}}}{\sqrt{4\pi s}}e^{\frac{1}{2}\int_{\eta}^\xi (\frac 12 -Q(\zeta))d\zeta}.
$$
Let $\xi_0\in \mathbb R$ such that $\psi(\xi_0)\neq 0$, we fix $\psi(\xi_0)=1$ without loss of generality. Then
\be \label{spectral:identity}
e^{-c s}=\int_{\mathbb R} \pa_\xi \tilde \Gamma(\xi,s,\eta) \phi(\eta)d\eta.
\ee
On the other hand,  we estimate using \eqref{def:Qxi},  $|\int_\eta^{\xi_0}(\frac 12-Q)|\leq C(\xi_0)+\frac 12 |\eta|$, from which and $|\zeta e^{-\zeta}|\lesssim 1$ we obtain for $s\geq 1$ that $|\pa_\xi \tilde \Gamma (\xi_0,s,\eta)|\lesssim e^{-\frac{s}{16}+\frac{|\eta|}{4}}$. Combining this, \eqref{spectral:zeromean} and \eqref{spectral:claim} yields for $s\geq 1$,
$$
\left| \int_{\mathbb R} \pa_\xi \tilde \Gamma(\xi,s,\eta) \phi(\eta)d\eta \right|\lesssim e^{-\frac{1}{16}s}\int_{\mathbb R} e^{(\frac 14-\mu )|\eta|}d\eta \lesssim e^{-\frac{1}{16}s}.
$$
This contradicts \eqref{spectral:identity} for $s$ large, hence, $\Ls_0$ has no eigenvalues in $(-1/16,0)$.

It remains to prove \eqref{spectral:claim}. Using \eqref{def:Qxi} we write
\be \label{spectral:asympt}
((\Ls_0+c)\psi)(\xi)=(\Ls_\infty \psi)(\xi)+O(e^{-\frac{|\xi|}{2}})\pa_\xi \psi(\xi)+O(e^{-\frac{|\xi|}{2}})\psi(\xi) \quad \mbox{as }\xi\to \infty,
\ee
where $\Ls_\infty=\pa_\xi^2+\frac 12 \pa_\xi +c$. The solutions of $\Ls_\infty f=0$ are $f_\pm (\xi)=e^{\lambda_\pm \xi}$ with $\lambda_\pm =\frac 14 (-1\pm \sqrt{1-16 c})$. By standard ODE arguments, as $(\Ls_0+c)\psi=0$, \eqref{spectral:asympt} implies that there exists $\iota \in \{\pm 1\}$ and $c_\infty \neq 0$ such that $\psi(\xi)\sim c_\infty e^{\lambda_\pm \xi}$ as $\xi \to \infty$. As $\psi\in L^2_{\omega_0}$ and $\omega_0 \approx e^{|\xi|/2}$ necessarily $\iota=+1$ so $|\psi(\xi)|\lesssim e^{-\mu \xi}$ for $\xi \geq 0$. The proof of $|\psi(\xi)|\lesssim e^{\mu \xi}$ for $\xi \leq 0$ is similar, yielding \eqref{spectral:claim}.
\end{proof}

For any $m \in H^1_{\omega_0}$,  one obtains by integration by parts,
\be \label{id:formulamLsm}
\int_{\mathbb R} m \Ls_0 m \omega_0d\xi=- \int_{\mathbb R} |\pa_\xi m|^2 \omega_0d\xi+\int_{\mathbb R}m^2 \pa_\xi Q \omega_0d\xi.
\ee
The above bilinear form is coercive outside the kernel of $\Ls_0$ as shown in the following lemma.

\begin{lemma}[Coercivity of $\Ls_0$] \label{lemm:coerLs0} There exist $\delta > 0$ such that for all $m \in H^1_{\omega_0}$ we have:
\begin{equation} \label{bd:coercivityquadraticform}
\langle \Ls_0 m, m\rangle_{L^2_{\omega_0}} \leq -\delta \| m\|_{H^1_{\omega_0}}^2+\langle m, \pa_\xi Q\rangle_{L^2_{\omega_0}}^2. 
\end{equation}
\end{lemma}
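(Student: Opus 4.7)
The plan is to combine two ingredients: an $L^2_{\omega_0}$-coercivity estimate coming from the spectral gap of $\Ls_0$ in Proposition~\ref{pr:spectralL0}, together with the integration by parts identity \eqref{id:formulamLsm}, which introduces $\|\pa_\xi m\|_{L^2_{\omega_0}}^2$ and allows an upgrade to the full $H^1_{\omega_0}$-norm.

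For the spectral ingredient, since $\Ls_0$ is self-adjoint on $L^2_{\omega_0}$ with a simple isolated eigenvalue at $0$ spanned by $\pa_\xi Q$ and with continuous spectrum contained in $(-\infty,-1/16]$, the spectral theorem directly yields
$$
\langle \Ls_0 m, m\rangle_{L^2_{\omega_0}}\leq -\frac{1}{16}\left(\|m\|_{L^2_{\omega_0}}^2-\frac{\langle m,\pa_\xi Q\rangle_{L^2_{\omega_0}}^2}{\|\pa_\xi Q\|_{L^2_{\omega_0}}^2}\right).
$$
The definition \eqref{def:omega} of $\omega_0$ gives the useful identity $\pa_\xi Q\,\omega_0\equiv 1/2$, from which I compute $\|\pa_\xi Q\|_{L^2_{\omega_0}}^2 = \tfrac 12\int_{\mathbb R}\pa_\xi Q\,d\xi = \tfrac 12$, so the inner-product coefficient above is exactly $1/8$.

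For the derivative ingredient, the same identity $\pa_\xi Q\,\omega_0 = 1/2$ recasts \eqref{id:formulamLsm} as $\langle \Ls_0 m,m\rangle_{L^2_{\omega_0}} = -\|\pa_\xi m\|_{L^2_{\omega_0}}^2 + \tfrac 12\|m\|_{L^2(\mathbb R)}^2$, and the bound $\omega_0\geq 4$ (from $(e^{\xi/4}+e^{-\xi/4})^2\geq 4$ by AM--GM) upgrades the last term to $\tfrac 18\|m\|_{L^2_{\omega_0}}^2$. I then take a convex combination: $\theta$ times this second estimate plus $(1-\theta)$ times the spectral one, for some $\theta\in(0,1/3)$, which makes the $\|m\|_{L^2_{\omega_0}}^2$-coefficient $(3\theta-1)/16$ strictly negative while retaining a fraction of $-\|\pa_\xi m\|_{L^2_{\omega_0}}^2$. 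Taking for instance $\theta=1/4$ yields $\delta = 1/64$ together with an inner-product coefficient $(1-\theta)/8 = 3/32 < 1$, which gives \eqref{bd:coercivityquadraticform}. The only point requiring checking is that this inner-product coefficient stays $\leq 1$; this is automatic from the explicit value $\|\pa_\xi Q\|_{L^2_{\omega_0}}^2 = 1/2$, so no real obstacle arises beyond properly invoking Proposition~\ref{pr:spectralL0}.
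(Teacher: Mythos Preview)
Your proof is correct and follows essentially the same route as the paper: both combine the spectral-gap estimate from Proposition~\ref{pr:spectralL0} with the integration-by-parts identity \eqref{id:formulamLsm} via a convex combination. Your version is slightly sharper in the constants (using $\pa_\xi Q\,\omega_0\equiv 1/2$ and $\omega_0\geq 4$ rather than the cruder bound $\pa_\xi Q\leq 1/2$ the paper invokes), but the structure of the argument is identical.
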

\begin{proof}
From Proposition \ref{pr:spectralL0} and the spectral Theorem, for any $m\in H^1_{\omega_0}$ such that $\langle m ,\pa_\xi Q \rangle_{L^2_{\omega_0}}=0$ there holds $\langle \Ls_0 m, m\rangle_{L^2_{\omega_0}} \leq - 1 / 16 \ \| m \|_{L^2_{\omega_0}}^2$. Hence, for $m\in H^1_{\omega_0}$, we have
$$
\langle \Ls_0 m, m\rangle_{L^2_{\omega_0}} \leq - \frac{1}{16}\| m \|_{L^2_{\omega_0}}^2+\frac{1}{16} \frac{\langle m , \pa_\xi Q \rangle_{L^2_{\omega_0}}^2}{\| \pa_\xi Q \|_{L^2_{\omega_0}}^2}.
$$
We use the formula \eqref{id:formulamLsm}, the above inequality and $|\pa_\xi Q|\leq \frac12$ from \eqref{def:Qxi} to write for $\delta \in (0, 1/9)$:
\begin{align*}
\langle \Ls_0 m, m\rangle_{L^2_{\omega_0}} & = -\delta \int_{\mathbb R} |\pa_\xi m|^2 \omega_0 +\delta\int_{\mathbb R}m^2 \pa_\xi Q \omega_0 +(1 - \delta)\langle \Ls_0 m, m\rangle_{L^2_{\omega_0}} \\
&\leq -\delta\int_{\mathbb R} |\pa_\xi m|^2 \omega_0 +\frac{\delta}{2}\int_{\mathbb R}m^2 \omega_0 - \frac{1-\delta}{16}\| m \|_{L^2_{\omega_0}}^2+\frac{(1 - \delta)}{16} \frac{\langle m , \pa_\xi Q \rangle_{L^2_{\omega_0}}^2}{\| \pa_\xi Q \|_{L^2_{\omega_0}}^2}\\
&\leq -\delta \| m\|_{H^1_{\omega_0}}^2+ \frac{\langle m , \pa_\xi Q \rangle_{L^2_{\omega_0}}^2}{\| \pa_\xi Q \|_{L^2_{\omega_0}}^2},
\end{align*}
which is the desired estimate\eqref{bd:coercivityquadraticform}.
\end{proof}

\subsection{Bootstrap regime} \label{sec:bootstrap}

We introduce for a constant $A > 0$ to be fixed later on:
\begin{align} \label{def:zetapm}
& \zeta_{\pm}=1\pm 4\nu |\log \nu|, \qquad  \zeta_{A,\pm}=1\pm \nu (4|\log \nu|+A),\\
 \label{def:xipm}
&\xi_{\pm}=\pm 4 |\log \nu|, \ \ \quad \qquad  \xi_{A,\pm}=\pm (4|\log \nu|+A),
\end{align}
and will refer to the zone $\zeta_{A,-} \leq \zeta \leq \zeta_{A,+}$ as the inner zone, and to the zone $\{ 0<\zeta\leq \zeta_-\} \cup \{ \zeta\geq \zeta_+\}$ as the outer zone. Note that these two zones overlap on $\{\zeta_{A,-}\leq \zeta \leq \zeta_-\}\cup \{ \zeta_+\leq \zeta \leq \zeta_{A,+}\}$.

Let $\chi_1$ be a smooth nonnegative cut-off, with $\chi_1(\xi)=1$ for $\xi\leq 0$ and $\chi_1(\xi)=0$ for $\xi\geq 1$. We define
$$
 \chi^\inn (s,\xi)=\chi_1(\xi-\xi_{A,+})\chi_1(\xi_{A,-}-\xi).
$$
Note that $\textup{supp}(\pa_\xi \chi^\inn)\subset [\xi_{A,-}-1,\xi_{A,-}]\cup [\xi_{A,+},\xi_{A,+}+1]$. We introduce
\begin{equation}\label{def:chi_inn}
 m_q^\inn (s,\xi) = \chi^\inn (s,\xi) m_q (s,\xi).
\end{equation}
The two main norms to control the remainder in our analysis are $\|m_q^\inn\|_{L^2_{\omega_0}}$ and a weighted $L^\infty$ bound for $\pa_\zeta m_\varepsilon$ for $\zeta\leq \zeta_{-}$ and $\zeta \geq \zeta_+$, from which we are able to derive the leading dynamical system driving the law of blowup solutions as described in Theorem \ref{theo:1}. The influence of the exterior zone on the interior one is measured by the  quantity
\begin{align} \label{def:boundarynorm}
\| m_\varepsilon \|_{\bd}& = \| m_\varepsilon \|_{L^\infty( [\zeta_{A,-}-2\nu,\zeta_{A,-}+2\nu]\cup [\zeta_{A,+}-2\nu,\zeta_{A,+}+2\nu ])}\\
\nonumber &\qquad +\nu \| \pa_\zeta m_\varepsilon \|_{L^\infty( [\zeta_{A,-}-2\nu,\zeta_{A,-}+2\nu]\cup [\zeta_{A,+}-2\nu,\zeta_{A,+}+2\nu ])}.
\end{align}
Since the norm $\|m_q^\inn\|_{L^2_{\omega_0}}$ itself is not enough to close nonlinear estimates, we introduce the adapted higher order regularity norm
\begin{equation}\label{def:inn}
\|m_q\|^2_\inn = -\int_{-\nicefrac{1}{\nu}}^\infty  m_q^\inn\Ls_0 m_q^\inn \omega_0 d\xi.
\end{equation}
Thanks to the coercivity of $\Ls_0$ given by \eqref{coer:id:coercivity2} and the orthogonality condition \eqref{orthogonality2}, we have the equivalence 
\begin{equation}\label{eq:normequiv}
\|m_q\|_\inn \sim \| m_q^\inn\|_{H^1_{\omega_0}}. 
\end{equation}
For a fixed small constant $0 < \eta \ll 1$, we introduce $ \hat \chi_{\eta}$ a smooth cut-off function defined as
\begin{equation}\label{def:hatchi_eta}
 \hat \chi_{\eta}(\zeta) = \left\{ \begin{array}{ll} 1 \;\; \textup{for}\;\; |\zeta - 1| \leq \eta,\\
0 \;\; \textup{for}\;\; |\zeta - 1| \geq 2 \eta.
\end{array}  \right.
\end{equation}
We define the following bootstrap estimates.

\begin{definition}[Bootstrap regime] \label{def:bootstrap} For $A,K ,\kappa,\eta ,M_0>0$ and $\tau > 0$, we define $\Sc(\tau)=\Sc [A,K,\kappa,\eta,M_0](\tau)$ as the set of all functions $m_u\in C^1 ((0, \infty),\mathbb R)$ for which there exist $M(\tau),R(\tau)>0$ with
\be
\frac{e^{-\frac \tau 2}}{4}< R(\tau)  < 4e^{-\frac \tau 2}, \qquad \frac{M_0}{4}< M(\tau)< 4 M_0 \label{est:MR}
\ee
such that $m_\varepsilon$ defined as in the decomposition \eqref{def:mepmq} satisfies
\begin{align}
\big| \pa_\zeta m_\ep(\zeta, \tau)\big| &< e^{-\kappa \tau} \left(K^{\frac{5}{4}} e^{-\frac 38 \frac{\zeta-\zeta_+}{\nu} }  \hat \chi_{\eta} + \zeta^{d-1} \right), \qquad \textup{for}\; \zeta \geq \zeta_+, \label{est:mepLinf_out} \\
\big| \pa_\zeta m_\ep(\zeta, \tau)\big| &< e^{-\kappa \tau} \left(K^\frac{5}{4} e^{-\frac{3}{8} \frac{ \zeta_--\zeta}{\nu} } \hat \chi_{\eta} +\nu \zeta^{d-1}  \right)  \qquad \textup{for}\; 0 < \zeta \leq \zeta_-. \label{est:mepLinf_out2}
\end{align}
and $m_q$ defined as in the decomposition \eqref{def:mq} satisfies the orthogonality conditions \eqref{orthogonality2} and \eqref{orthCon2} and
\be \label{est:mqL2ome}
\|m_q(\tau) \|_\inn < K e^{-\kappa \tau}.
\ee

\end{definition}

\begin{remark}
The specific constant $\frac{3}{8}$ is just for a sake of simplification and can be any real number in the interval  $\big(\frac 14,\frac 12\big)$. The two constants $A$ and $K$ will chosen such that $e^{3A/10}\leq K \leq e^{A/2}$ to ensure certain estimates. The points $\zeta= 1\pm 4\nu |\log \nu |$ are chosen so that linear estimates in the inner and exterior zones are compatible at these points.

\end{remark}

\noindent We claim the following proposition which is central for our analysis.
\begin{proposition}[Existence of solutions to \eqref{eq:mqxis} trapped in $S(\tau)$]  \label{prop:trap_mq} There exist constants $K,A \gg 1$, $0 < \kappa,\eta \ll 1$ and a function $\bar M_0\mapsto \tau_0^*(\bar M_0)$, such that for any $\bar M_0>0$, for any $M_0\geq \bar M_0$ and $\tau_0\geq \tau_0^*$, if initially
\be \label{bootstrap:choiceR0}
R(\tau_0)=e^{-\frac{\tau_0}{2}}, \qquad M(\tau_0)=M_0,
\ee
and $m_\varepsilon(0)$ satisfies
\begin{align}
& \label{bd:bootstrap:initial} m_u (0) \in \Sc [A,1,\kappa,\eta,M_0](\tau_0), \\
& \label{bd:bootstrap:initialregularity}
|\pa_\zeta m_\varepsilon (\tau_0) | < \frac 12 e^{-\kappa \tau_0}\zeta^{d-1} \qquad \mbox{for} \quad \zeta \geq \zeta_+(0),\\
& |\pa_\zeta m_\varepsilon (\tau_0) | < \frac 12 \nu_0 e^{-\kappa \tau_0}\zeta^{d-1} \qquad \mbox{for} \quad 0 \leq \zeta \leq \zeta_-(0),
\end{align}
where $\zeta_\pm(0)=1\pm 4 \nu_0 |\log \nu_0| $ with $\nu_0=R^{d-2}(\tau_0)M^{-1}(\tau_0)$. Then, the solution to \eqref{eq:mep0} with the initial datum $m_\varepsilon(0)$ exists for all $\tau \geq \tau_0$ and belongs to $\Sc [A,K,\kappa,\eta,M_0](\tau)$ for all $\tau \in [\tau_0, +\infty)$.
\end{proposition}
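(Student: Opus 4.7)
The strategy is a continuity/bootstrap argument centered on the trapping set $\Sc[A,K,\kappa,\eta,M_0]$. First, local well-posedness for \eqref{eq:murt} together with the implicit function theorem applied to the orthogonality system \eqref{orthogonality2}-\eqref{orthCon2} provides a short-time $C^1$ solution $(m_u(\tau), R(\tau), M(\tau))$: the $2\times 2$ Jacobian of the orthogonality map with respect to $(R,M)$ is nondegenerate since the first condition controls the translation direction $\pa_\xi Q$ and the second controls the amplitude direction $Q + \xi \pa_\xi Q$. Define the exit time
$$
\tau^* = \sup\bigl\{\tau \geq \tau_0 \,:\, m_u(\tilde\tau) \in \Sc[A,K,\kappa,\eta,M_0](\tilde\tau) \text{ for all } \tilde\tau \in [\tau_0,\tau]\bigr\}.
$$
Assume for contradiction that $\tau^* < +\infty$. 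I will strictly improve each of \eqref{est:MR}, \eqref{est:mepLinf_out}-\eqref{est:mepLinf_out2} and \eqref{est:mqL2ome} on $[\tau_0,\tau^*]$ (roughly halving the constants), which by continuity of the flow contradicts the maximality of $\tau^*$.

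Pairing \eqref{eq:mqxis} against $\chi_A \pa_\xi Q\,\omega_0$ and against $\chi_{1,\xi_{A,+}}$ and differentiating the orthogonality conditions yields a nearly diagonal linear system for $(R_\tau/R + 1/2, M_\tau/M)$, the $\Ls_0 m_q$ contribution vanishing by self-adjointness and the kernel property. Inverting it gives
$$
\Bigl|\tfrac{R_\tau}{R}+\tfrac12\Bigr| + \Bigl|\tfrac{M_\tau}{M}\Bigr| \lesssim \|\Psi\|_{L^2_{\omega_0}} + \nu\|m_q\|_{\textup{in}} + \|m_q\|_{\textup{in}}^2 \lesssim \nu + K e^{-\kappa\tau}.
$$
Integrating in $\tau$ with the initial choice \eqref{bootstrap:choiceR0} keeps $R(\tau)$ and $M(\tau)$ well inside the intervals of \eqref{est:MR} provided $\tau_0$ is large and $\kappa$ is small, strictly improving \eqref{est:MR}.

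For the inner norm, apply $\chi^\inn$ to \eqref{eq:mqxis} and compute $\frac{d}{ds}\|m_q\|_{\textup{in}}^2$. Lemma \ref{lemm:coerLs0} together with the orthogonality \eqref{orthogonality2} produces a damping term $-2\delta \|m_q^\inn\|_{H^1_{\omega_0}}^2$. The sources split into: (a) the driving error $\Psi$, controlled modulo modulation by $O(\nu)$ in $L^2_{\omega_0}$; (b) the lower-order linear terms $L(m_q)$ of size $O(\nu \|m_q\|_{\textup{in}})$; (c) the boundary commutator $[\pa_s-\Ls_0, \chi^\inn] m_q$ supported on $[\xi_{A,\pm},\xi_{A,\pm}\pm 1]$; (d) the nonlinear term, controlled after parabolic regularization via Lemma \ref{lem:rightboundary}. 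For (c), the weight $\omega_0(\xi_{A,\pm}) \sim \nu^{-2} e^{A/2}$ combined with the outer bounds \eqref{est:mepLinf_out}-\eqref{est:mepLinf_out2} evaluated at $\zeta_{A,\pm}$ (where $(\zeta_{A,\pm}-\zeta_\pm)/\nu = A$, so the supersolution picks up the factor $K^{5/4}e^{-3A/8}$) produces a contribution of order $K^{1/4}e^{-A/8}\cdot Ke^{-\kappa\tau}$, which is strictly smaller than $Ke^{-\kappa\tau}$ in the window $e^{3A/10}\leq K \leq e^{A/2}$. A Gronwall-type integration of the resulting differential inequality, starting from initial data of size $e^{-\kappa\tau_0}$ (with constant $1$) by \eqref{bd:bootstrap:initial}, yields $\|m_q(\tau^*)\|_{\textup{in}} \leq \frac{K}{2} e^{-\kappa\tau^*}$, strictly improving \eqref{est:mqL2ome}.

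The outer bounds \eqref{est:mepLinf_out}-\eqref{est:mepLinf_out2} are obtained by the parabolic comparison principle applied to \eqref{strat:eq:mep1}, whose operator $\As_1 = \As - (d-1)\zeta^{-d} - 1/2$ is strictly dissipative. On $\zeta \geq \zeta_+$ the supersolution
$$
\Phi_+(\tau,\zeta) = \tfrac12 e^{-\kappa\tau}\bigl(K^{5/4} e^{-\frac{3}{8}(\zeta-\zeta_+)/\nu}\hat\chi_\eta(\zeta) + \zeta^{d-1}\bigr)
$$
dominates $\pa_\zeta \bar\Psi + h.o.t.$ for $\kappa$ small versus $1/2$ and $K$ large; the boundary value at $\zeta = \zeta_+$ is furnished by the parabolic regularization Lemma \ref{lem:rightboundary}, which upgrades \eqref{est:mqL2ome} to $|\pa_\zeta m_\varepsilon(\zeta_+,\tau)| \lesssim \nu^{-1} e^{-|\xi_+|/4}\,Ke^{-\kappa\tau} \sim Ke^{-\kappa\tau} \ll \Phi_+(\tau,\zeta_+)$. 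Comparison then yields $|\pa_\zeta m_\varepsilon| \leq \Phi_+$, strictly improving \eqref{est:mepLinf_out}, and the bound \eqref{est:mepLinf_out2} is symmetric. The main obstacle throughout is the mutual calibration in step (c): the outer $L^\infty$-derivative bound of size $K^{5/4}e^{-\kappa\tau}$ must pass through the exponentially growing weight $\omega_0 \sim e^{|\xi|/2}$ on the commutator's support while remaining strictly below the target $Ke^{-\kappa\tau}$ for the inner energy. This forces the delicate three-parameter trade-off $e^{3A/10} \leq K \leq e^{A/2}$ with $\tau_0$ large enough, and verifying that this trade-off remains simultaneously compatible with the modulation estimates, the nonlinear error, and the supersolution inequality in the outer zone is the most subtle point of the argument.
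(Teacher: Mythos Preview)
Your proposal is correct and follows essentially the same approach as the paper: a continuity argument in which each bootstrap bound is strictly improved via Corollary~\ref{cor:mod} for \eqref{est:MR}, Lemma~\ref{lemm:mqinn} with Gronwall integration for \eqref{est:mqL2ome}, and the parabolic comparison principle (Lemmas~\ref{lemm:mep_mid} and~\ref{lemm:mep_out}) for \eqref{est:mepLinf_out}-\eqref{est:mepLinf_out2}, with Lemma~\ref{lem:rightboundary} supplying the boundary data at $\zeta_\pm$. One small inaccuracy: in item~(d) the nonlinear term in the inner energy is controlled directly via the Sobolev pointwise bound \eqref{bd:pointwisemq} rather than via parabolic regularization, but this does not affect the argument.
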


 We postpone the proof of Proposition \ref{prop:trap_mq}  to Section \ref{sec:4.4} as it is a consequence of improved estimates obtained in Lemmas \ref{lemm:mqinn}, \ref{lemm:mep_mid} and \ref{lemm:mep_out} below.

\section{Control of the solution in the bootstrap regime} \label{sec:4}

We now fix $\bar M_0>0$ and pick constants $A,\eta,\kappa,\tau_0$ and $K>1$ whose values are allowed to change from one lemma to another. When proving Proposition \ref{prop:trap_mq} at the end of the section, we will prove that the conclusions of all lemmas are simultaneously valid for values of $A,K,\eta,\kappa,\tau_0$ as described in the proposition.

Throughout the section, we consider a solution $m_\varepsilon$ to \eqref{eq:mep0} with data $m_\varepsilon(0)$ that satisfies \eqref{bd:bootstrap:initial} and \eqref{bd:bootstrap:initialregularity}, with $R(\tau_0)=e^{-\tau_0/2}$ and $M(\tau_0)=M_0$. We assume that for some $t_1>0$, there exist $R,M\in C^1([0,t_1],(0,\infty))$ such that, defining $\tau$ by \eqref{def:mw}, then $m_\varepsilon(\tau)\in \Sc (\tau(t))$ for all $\tau \in [\tau_0,\tau_1]$ where $\tau_1=\tau(t_1)$, and that the parameters $R$ and $M$ given by Definition \ref{def:bootstrap} coincide with $R(\tau(t))$ and $M(\tau(t))$. We pick any $s_0\in \mathbb R$, define $s$ by \eqref{def:xis} and introduce $s_1=s(t_1)$.

Note that for $\tau_0$ large enough, there exists $t_1>0$ such that this holds true and that $M$ and $R$ are unique, as a consequence of the continuity of the flow of \eqref{eq:mep0} and of the implicit function Theorem to determine $M$ and $R$ from the orthogonality conditions \eqref{orthogonality2} and \eqref{orthCon2}. We omit the proof of this standard fact.

\subsection{A priori bounds}

\begin{lemma} \label{lem:aprioriestimates}

There exists $A^*>0$ such that for any $A\geq A^*$, for any $\kappa,\eta,\bar M_0>0$ and $K\geq e^{3A/10}$, if $\tau_0$ is large enough, then for $\tau_0\leq \tau \leq \tau_1$:
\be \label{bd:pointwisemq}
|m_q (s,\xi)|\lesssim \left\{
\begin{array}{l l l l}
Ke^{-\kappa \tau}e^{-\frac{|\xi|}{4}} &\qquad  \mbox{for }|\xi|\leq 4|\log \nu|+A,\\
\nu K^{\frac 54}e^{-\frac 38 A} e^{-\kappa \tau}(1+|\xi-\xi_{A,+}|)\zeta^{d-1}&\qquad  \mbox{for } \xi>  \xi_{A,+},\\
\nu K^{\frac 54}e^{-\frac 38 A} e^{-\kappa \tau}(\hat \chi_\eta(\zeta)+\zeta^d) & \qquad \mbox{for } \xi <\xi_{A,-}.
\end{array}
 \right.
\ee
\be \label{bd:boundarynorm}
\| m_\varepsilon \|_{\bd}\lesssim \nu K^{\frac{5}{4}} e^{-\frac 38 A} e^{-\kappa \tau},
\ee
\be \label{bd:estimationtau}
\frac{4^{1-d}}{M_0} e^{-\frac{d-2}{2}\tau}\leq \nu \leq \frac{4^{d-1}}{M_0} e^{-\frac{d-2}{2}\tau}.
\ee
\end{lemma}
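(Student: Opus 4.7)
The plan is to prove the four estimates in sequence. The bound \eqref{bd:estimationtau} on $\nu = R^{d-2}/M$ follows immediately from the bootstrap bounds \eqref{est:MR} by considering extreme values of $R$ and $M$. For the interior pointwise bound (first line of \eqref{bd:pointwisemq}), I will apply a weighted Sobolev embedding to $f = m_q^\inn$. The weight $\omega_0 = (e^{\xi/4}+e^{-\xi/4})^2$ satisfies $|\omega_0'|\leq \omega_0/2$ and $\omega_0^{1/2}(\xi)\geq e^{|\xi|/4}$; writing $f(\xi)^2\omega_0(\xi) = -\int_\xi^\infty \pa_\eta(f^2\omega_0)d\eta$ and applying Cauchy--Schwarz yields $|f(\xi)|^2 \omega_0(\xi) \lesssim \|f\|_{H^1_{\omega_0}}^2$. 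Combining with the norm equivalence \eqref{eq:normequiv} and the bootstrap bound \eqref{est:mqL2ome}, and noting that $\chi^\inn = 1$ on $[\xi_{A,-}, \xi_{A,+}]$, this delivers $|m_q(\xi)|\lesssim K e^{-\kappa\tau}e^{-|\xi|/4}$ in the inner zone.

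For the outer pointwise bounds (second and third lines of \eqref{bd:pointwisemq}), I will integrate the pointwise derivative bounds \eqref{est:mepLinf_out}--\eqref{est:mepLinf_out2} from a known zero of $m_\ep$. On the right ($\xi>\xi_{A,+}$), the orthogonality \eqref{orthCon2} combined with the mean value theorem (as noted in \eqref{est:mqbydev}) supplies a zero $\xi_*\in(\xi_{A,+}-2, \xi_{A,+}+2)$ of $m_q$; on the left ($\xi < \xi_{A,-}$), the boundary condition $m_\ep(0)=0$ serves as the reference. Using $\pa_\xi m_q = \nu\pa_\zeta m_\ep$, the exponential part of \eqref{est:mepLinf_out}--\eqref{est:mepLinf_out2} integrates to $O(\nu K^{5/4} e^{-3A/8}e^{-\kappa\tau})$ (since $e^{-3(\zeta-\zeta_+)/(8\nu)}\leq e^{-3(A-2)/8}$ beyond the strip $\zeta_{A,+}-2\nu$, and analogously on the left using the $\hat\chi_\eta$ cut-off), while the polynomial part $\int_{\zeta_*}^\zeta \tilde\zeta^{d-1}d\tilde\zeta \leq \zeta^{d-1}(\zeta-\zeta_*) = \nu\zeta^{d-1}(\xi-\xi_*)$ on the right, respectively $\nu\int_0^\zeta \tilde\zeta^{d-1}d\tilde\zeta = \nu\zeta^d/d$ on the left, produces the remaining term. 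The extra factor $K^{5/4}e^{-3A/8}$ in the stated polynomial term is absorbed using $1\leq K^{5/4}e^{-3A/8}$, which is exactly equivalent to the standing assumption $K\geq e^{3A/10}$.

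The boundary norm bound \eqref{bd:boundarynorm} is then obtained by specialising the above analysis to the strips $[\zeta_{A,\pm}-2\nu, \zeta_{A,\pm}+2\nu]$: for $A>2$ these strips lie in the regime where \eqref{est:mepLinf_out}--\eqref{est:mepLinf_out2} hold, and on them $|\zeta-\zeta_\pm|/\nu\geq A-2$, so the exponential term in the derivative bound is of order $e^{-3A/8}$ and the polynomial term is $O(1)$. Multiplying by $\nu$ yields the derivative part of $\|m_\ep\|_\bd$, and integrating from $\xi_*$ (respectively from $\zeta=0$ on the left) using the same estimates yields the pointwise part, giving $\|m_\ep\|_\bd \lesssim \nu K^{5/4}e^{-3A/8}e^{-\kappa\tau}$.

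The main bookkeeping difficulty will be the matching of the various exponential and polynomial factors: specifically, checking that the choice of thresholds $\xi_{A,\pm}$, $\zeta_\pm$ and the constraint $K\geq e^{3A/10}$ together produce exactly the factor $K^{5/4}e^{-3A/8}$ claimed in all three outer terms and in $\|m_\ep\|_\bd$, and that the polynomial weights $\zeta^{d-1}$ or $\zeta^d$ arising from integration fit the stated shapes. All other aspects are routine applications of the fundamental theorem of calculus and the spectral estimates of Proposition \ref{pr:spectralL0} and Lemma \ref{lemm:coerLs0}.
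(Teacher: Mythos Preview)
Your proposal is correct and follows essentially the same approach as the paper: the Sobolev embedding you derive inline is precisely the appendix inequality \eqref{iq:LinfSobolev} the paper cites, and your integration arguments from $\xi_*$ (on the right) and from $\zeta=0$ (on the left), together with the absorption $1\leq K^{5/4}e^{-3A/8}$, match the paper's use of \eqref{est:mqbydev}, \eqref{est:mepLinf_out}--\eqref{est:mepLinf_out2}, and \eqref{bd:technicalapriori1}. The only cosmetic difference is that the paper records the left-side exponential integration as the separate display \eqref{bd:technicalapriori1}, splitting into the cases $1-\eta\leq\zeta\leq\zeta_{A,-}$ and $1-2\eta\leq\zeta\leq 1-\eta$, whereas you describe this step in words; your bookkeeping is accurate.
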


\begin{proof}

The first inequality in \eqref{bd:pointwisemq} is obtained from the Sobolev estimate \eqref{iq:LinfSobolev}, \eqref{eq:normequiv} and \eqref{est:mqL2ome}. The second inequality in \eqref{bd:pointwisemq} is obtained from \eqref{est:mqbydev} and \eqref{est:mepLinf_out} using that $\pa_\xi=\nu \pa_\zeta$ and $1\leq K^{5/4}e^{-3A/8}$ as $K\geq e^{3A/10}$. Then, we estimate that for $1-2\eta\leq \zeta\leq \zeta_{A,-}$:
\be \label{bd:technicalapriori1}
K^{\frac 54}\int_0^\zeta e^{-\frac 38 \frac{\zeta_--\tilde \zeta}{\nu}}\hat \chi_\eta (\tilde \zeta)  d\tilde \zeta \lesssim K^{\frac 54} \int_0^\zeta e^{-\frac 38 \frac{\zeta_--\tilde \zeta}{\nu}}  d\tilde \zeta \lesssim K^{\frac 54} \nu e^{-\frac 38 \frac{\zeta_--\tilde \zeta}{\nu}} \lesssim \nu K^{\frac 54} e^{-\frac 38 A}\hat \chi_\eta+\nu \zeta^d,
\ee
where we used \eqref{def:hatchi_eta}, that $e^{-\frac 38 \frac{\zeta_--\tilde \zeta}{\nu}}\leq e^{-\frac 38 A }$ for $1-\eta \leq \zeta\leq \zeta_{A,-}$ and $K^{\frac 54} e^{-\frac 38 \frac{\zeta_--\tilde \zeta}{\nu}}\leq K^{\frac 54} e^{-\frac{\eta}{4 \nu} }\lesssim 1$ for $1-2\eta \leq \zeta\leq 1 -\eta$ for $\tau_0$ large enough depending on $K$. The third inequality in \eqref{bd:pointwisemq} is then obtained from \eqref{est:mepLinf_out2} using $m_\varepsilon(0)=0$ and \eqref{bd:technicalapriori1}. Then, \eqref{bd:boundarynorm} is a direct consequence of \eqref{bd:pointwisemq}, \eqref{est:mepLinf_out} and \eqref{est:mepLinf_out2}. Finally, \eqref{bd:estimationtau} follows from \eqref{eq:relRMnu} and \eqref{est:MR}. \end{proof}

\subsection{Modulation equations}

The evolution of the modulation parameters $R$ and $M$ is given in the following lemma. 

\begin{lemma}[Modulation equations] \label{lemm:mod}
There exists $A^*>0$ such that for any $A\geq A^*$, for any $\kappa,\eta,\bar M_0>0$ and $K\geq e^{3A/10}$, for $\tau_0$ large enough, there holds for all $\tau_0\leq \tau\leq \tau_1$,
\begin{align}\label{eq:ModR}
& \left| \frac{R_\tau}{ R} + \frac{1}{2} \right|  \lesssim  \nu +  e^{-\frac{A}{4}}\|m_q(\tau)\|_{\inn} +A \| m_\varepsilon \|_{\bd} ,\\
& \label{eq:ModM} \left| \frac{M_s}{M}\right| \lesssim  \nu^3 |\log \nu|e^{-\frac A2}+\nu^2 e^{-\frac 34 A}\|m_q(\tau)\|_{\inn} + \|  m_\varepsilon \|_{\bd}.
\end{align}
\end{lemma}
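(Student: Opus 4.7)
The plan is to differentiate the orthogonality conditions \eqref{orthogonality2} and \eqref{orthCon2} in $s$, substitute the equation \eqref{eq:mqxis} for $\pa_s m_q$, and solve the resulting $2 \times 2$ linear system in the unknowns $(R_\tau/R + 1/2, M_s/M)$ after bounding the error terms with the help of Lemma \ref{lem:aprioriestimates}. The structure is adapted to the spectral picture around $\pa_\xi Q$: condition \eqref{orthogonality2} principally determines $R_\tau/R + 1/2$ because it tests against the kernel of $\Ls_0$, while \eqref{orthCon2}, localized at $\xi_{A,+}$ where $\pa_\xi Q$ is exponentially small, essentially isolates $M_s/M$.

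For the first equation, since $\chi_{_A}$ is $s$-independent and $-1/\nu \ll -2A$ for $\tau_0$ large (so the moving integration endpoint plays no role), differentiating \eqref{orthogonality2} gives $\int \chi_{_A}\, \pa_\xi Q\, \omega_0\, \pa_s m_q\, d\xi = 0$. Substituting \eqref{eq:mqxis}, the $\Ls_0 m_q$ term becomes, by self-adjointness in $L^2_{\omega_0}$ and using $\Ls_0 \pa_\xi Q = 0$, equal to $\int m_q\, [\Ls_0, \chi_{_A}] \pa_\xi Q\, \omega_0\, d\xi$. The commutator is supported on $\{A \leq |\xi| \leq 2A\}$, where $|\pa_\xi Q| \lesssim e^{-|\xi|/2}$ and $\omega_0 \approx e^{|\xi|/2}$, so a direct computation gives $\|[\Ls_0, \chi_{_A}]\pa_\xi Q\|_{L^2_{\omega_0}} \lesssim A^{-1} e^{-A/4}$, and Cauchy--Schwarz with \eqref{est:mqL2ome} bounds it by $Ce^{-A/4}\|m_q\|_\inn$. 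The leading contribution from $\Psi$ is $(R_\tau/R + 1/2)\int \chi_{_A} (\pa_\xi Q)^2 \omega_0\, d\xi = \tfrac{1}{2}(R_\tau/R + 1/2)(1 + o_A(1))$, using the identity $2\pa_\xi Q\, \omega_0 = 1$, while the term $-(M_s/M)(\bar Q + \xi \pa_\xi \bar Q)$ produces a coupling coefficient $-\tfrac{1}{2}\int \chi_{_A}(Q + \xi \pa_\xi Q)\, d\xi = O(A)$ since $Q \to 1$ at $+\infty$. The remaining $\Psi$-terms produce an $O(\nu)$ residue coming from the Taylor expansion of the $(1+\nu\xi)$-factors at $\nu = 0$ (the leading-order odd moment vanishes by parity of $\pa_\xi Q$, so one term survives at order $\nu$), and exponentially small terms from the truncation $\bar \chi$, which is irrelevant on $|\xi|\leq 2A \ll |\log\nu|$. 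Contributions from $L(m_q)$ and the nonlinearity, controlled via \eqref{bd:pointwisemq}, are at most $O(\nu \|m_q\|_\inn + \|m_q\|_\inn^2)$ and absorbed into the right-hand side of \eqref{eq:ModR}.

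For the second equation, differentiating \eqref{orthCon2} generates an extra contribution from $\pa_s \chi_{_{1,\xi_{A,+}}} = -4(\nu_s/\nu)\chi_0'(\xi - \xi_{A,+})$, which involves $m_q$ evaluated on $[\xi_{A,+}, \xi_{A,+}+1]$ and is therefore bounded by $\|m_\varepsilon\|_\bd$ after using the definition \eqref{def:boundarynorm}. Substituting \eqref{eq:mqxis}, the $\Ls_0 m_q$ contribution integrates by parts onto derivatives of $\chi_{_{1,\xi_{A,+}}}$ supported near $\xi_{A,+}$, again bounded by $\|m_\varepsilon\|_\bd$. The leading $\Psi$-contribution is $-(M_s/M)\int \chi_{_{1,\xi_{A,+}}}(\bar Q + \xi \pa_\xi \bar Q)\, d\xi \approx -(M_s/M)\int \chi_0\, d\eta$, yielding an $O(1)$ diagonal entry since $\bar Q \approx 1$ and $|\pa_\xi \bar Q(\xi_{A,+})| \lesssim \nu^2 e^{-A/2}$; for the same reason the coupling to $R_\tau/R + 1/2$ is only $O(\nu^2 e^{-A/2})$. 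The residual $\Psi$-term $-(d-1)(\nu\xi/2)\pa_\xi \bar Q$ evaluated near $\xi_{A,+}$ produces the prefactor $\nu \cdot |\log \nu| \cdot \nu^2 e^{-A/2} = \nu^3 |\log\nu| e^{-A/2}$, while $L(m_q)$ and nonlinear terms contribute $O(\nu^2 e^{-3A/4}\|m_q\|_\inn)$ after using the pointwise bounds of \eqref{bd:pointwisemq} at $\xi_{A,+}$. The resulting $2\times 2$ system has diagonal entries of order $1$ and off-diagonal entries of order $O(A)$ and $O(\nu^2 e^{-A/2})$, so its determinant is $\sim 1$; inversion yields the bounds \eqref{eq:ModR} and \eqref{eq:ModM} (note the $A\|m_\varepsilon\|_\bd$ contribution in \eqref{eq:ModR} comes from multiplying the $\|m_\varepsilon\|_\bd$-part of the second equation by the $O(A)$ off-diagonal entry). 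The main obstacle is the careful bookkeeping of the numerous error terms in $L(m_q)$ and $\Psi$, each with a specific combination of $\nu$-, $A$-, and $e^{-|\xi|/2}$-factors; the essential mechanism ensuring nondegeneracy is that the second orthogonality is placed precisely where $\pa_\xi Q$ is exponentially small, decoupling the mass equation from the position equation at leading order.
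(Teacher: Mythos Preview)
Your proposal is correct and follows essentially the same route as the paper: differentiate the two orthogonality conditions, insert \eqref{eq:mqxis}, and solve the resulting $2\times2$ system whose diagonal is $O(1)$ and whose off-diagonals are $O(A)$ and $O(\nu^2 e^{-A/2})$. One small misattribution: the $\nu^2 e^{-3A/4}\|m_q\|_\inn$ term in \eqref{eq:ModM} does not arise directly from $L(m_q)$ in the second equation (that term is controlled by $\|m_\varepsilon\|_\bd$, cf.\ \eqref{est:mqbydev}), but from the cross-coupling upon inversion---the $O(\nu^2 e^{-A/2})$ off-diagonal entry multiplied by the $e^{-A/4}\|m_q\|_\inn$ part of the first right-hand side---exactly parallel to how you correctly trace the $A\|m_\varepsilon\|_\bd$ term in \eqref{eq:ModR}.
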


\begin{corollary} \label{cor:mod}

There exists $\kappa^*(d)>0$ such that for $0<\kappa\leq \kappa^*$ and under the assumptions of Lemma \ref{lemm:mod}, for $\tau_0$ large enough we have:
\begin{align}\label{eq:Modnu}
& \left| \nu_\tau+\frac{d-2}{2}\nu \right|\lesssim \nu \left( \nu+e^{-\frac A4}\| m_q\|_{\inn}+\nu^{-1} \| m_\varepsilon \|_{\bd}  \right),\\
\label{bootstrap:improvedRM}
& \frac{1}{2} e^{-\frac{\tau}{2}}\leq R(\tau) \leq 2e^{-\frac{\tau}{2}}, \qquad \frac{M_0}{2}\leq M(\tau) \leq 2M_0 .
\end{align}
Moreover, if $m_u$ is trapped in $\Sc(\tau)$ for all $\tau \in [\tau_0, +\infty)$, there exist $\tilde R_\infty,M_\infty>0$ so that
\begin{align}
\label{modulation:bd:asymptoticR} & R(\tau)=\tilde R_\infty e^{-\frac \tau 2}\big(1+\Oc(e^{-\kappa \tau})\big), \\
\label{modulation:bd:asymptoticM} & M(\tau)=M_\infty \big(1+\Oc(e^{-\kappa \tau})\big),\\
\label{modulation:bd:asymptoticnu} & \nu(\tau)=\tilde \nu_\infty e^{-\frac{d-2}{2} \tau }\big(1+\Oc(e^{-\kappa \tau})\big),
\end{align}
where $\tilde \nu_\infty=\tilde R_{\infty}^{d-2}M_\infty^{-1}$, and where the constants in the $\Oc()$ depend on $K,\kappa,\bar M_0$.

\end{corollary}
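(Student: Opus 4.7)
The plan is to (i) obtain \eqref{eq:Modnu} from \eqref{eq:ModR}, \eqref{eq:ModM}, and the identity relating $\nu_\tau$ to the modulation parameters, (ii) plug the a priori bounds of Lemma \ref{lem:aprioriestimates} into the modulation estimates to derive exponential decay of $\bigl|\frac{R_\tau}{R}+\tfrac12\bigr|$ and $\bigl|\frac{M_\tau}{M}\bigr|$, and (iii) integrate to get the improved bootstrap bounds \eqref{bootstrap:improvedRM} and, in the trapped case, the asymptotic expansions.

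\emph{Step 1 (Derivation of \eqref{eq:Modnu}).} Differentiating $\nu = R^{d-2}/M$ in $\tau$ yields
\begin{equation*}
\nu_\tau + \tfrac{d-2}{2}\nu = (d-2)\Bigl(\frac{R_\tau}{R}+\tfrac12\Bigr)\nu - \frac{M_\tau}{M}\nu.
\end{equation*}
Using $s_\tau = 1/\nu$ so that $M_\tau/M = \nu^{-1}(M_s/M)$, I substitute \eqref{eq:ModR} into the first term and \eqref{eq:ModM} into the second. The contribution $\nu\cdot A\|m_\varepsilon\|_\bd$ from \eqref{eq:ModR} and the contribution $\nu^{-1}\|m_\varepsilon\|_\bd\cdot \nu$ from \eqref{eq:ModM} both yield terms bounded by $\nu\cdot\nu^{-1}\|m_\varepsilon\|_\bd$ up to harmless factors (since for $\tau_0$ large $\nu\ll 1$ absorbs $A$); the remaining terms like $\nu^2$ and $\nu^2 e^{-3A/4}\|m_q\|_\inn$ are dominated by $\nu\cdot\nu$ and $\nu\cdot e^{-A/4}\|m_q\|_\inn$ respectively. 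This gives \eqref{eq:Modnu}.

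\emph{Step 2 (Improved bounds \eqref{bootstrap:improvedRM}).} I combine \eqref{eq:ModR}--\eqref{eq:ModM} with the a priori control \eqref{bd:boundarynorm} and \eqref{bd:estimationtau}, together with the bootstrap assumption $\|m_q(\tau)\|_\inn\leq Ke^{-\kappa\tau}$. This yields
\begin{equation*}
\Bigl|\frac{R_\tau}{R}+\tfrac12\Bigr| \lesssim e^{-\frac{d-2}{2}\tau} + Ke^{-\frac{A}{4}}e^{-\kappa\tau} + A\,\nu K^{5/4}e^{-\frac{3A}{8}}e^{-\kappa\tau},
\end{equation*}
and, using $|M_\tau/M|=\nu^{-1}|M_s/M|$ together with $\nu^{-1}\|m_\varepsilon\|_\bd\lesssim K^{5/4}e^{-3A/8}e^{-\kappa\tau}$,
\begin{equation*}
\Bigl|\frac{M_\tau}{M}\Bigr|\lesssim \nu^2|\log\nu|e^{-\frac{A}{2}} + \nu Ke^{-\frac{3A}{4}}e^{-\kappa\tau} + K^{5/4}e^{-\frac{3A}{8}}e^{-\kappa\tau}.
\end{equation*}
Choosing $\kappa^*=\kappa^*(d)\in(0,(d-2)/2)$, every term on the right decays exponentially in $\tau$, at rate bounded below by $\kappa$. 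Integrating $(\log R+\tau/2)_\tau$ and $(\log M)_\tau$ from $\tau_0$ to $\tau$ produces an error $\mathcal O(e^{-\kappa\tau_0})$, which is $\leq \log 2$ for $\tau_0$ large enough. Combined with the initial normalisation $R(\tau_0)=e^{-\tau_0/2}$ and $M(\tau_0)=M_0$, this yields \eqref{bootstrap:improvedRM} (an improvement by a factor $2$ over the bootstrap \eqref{est:MR}).

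\emph{Step 3 (Asymptotic laws).} Assume $m_u\in \mathcal S(\tau)$ for all $\tau\geq \tau_0$. The exponential decay of $(\log R+\tau/2)_\tau$ and $(\log M)_\tau$ derived in Step 2 shows both are integrable on $[\tau_0,+\infty)$. Thus $\log(Re^{\tau/2})$ and $\log M$ have finite limits, defining $\tilde R_\infty,M_\infty>0$ with
\begin{equation*}
\Bigl|\log\frac{Re^{\tau/2}}{\tilde R_\infty}\Bigr| + \Bigl|\log\frac{M}{M_\infty}\Bigr| \lesssim \int_\tau^\infty e^{-\kappa \tilde\tau}d\tilde\tau \lesssim e^{-\kappa \tau},
\end{equation*}
which gives \eqref{modulation:bd:asymptoticR}--\eqref{modulation:bd:asymptoticM}. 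Finally \eqref{modulation:bd:asymptoticnu} follows from $\nu=R^{d-2}/M$ with $\tilde\nu_\infty=\tilde R_\infty^{d-2}/M_\infty$.

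\emph{Main obstacle.} The delicate point is the choice of $\kappa^*(d)$: one needs $\kappa<(d-2)/2$ so that the purely geometric term $\nu\lesssim e^{-(d-2)\tau/2}$ in \eqref{eq:ModR} decays faster than $e^{-\kappa\tau}$, while still keeping $\kappa>0$ so that the exponential weights in all subsequent lemmas close. The other potentially dangerous contribution is $\nu^{-1}\|m_\varepsilon\|_\bd$ appearing in \eqref{eq:Modnu} and \eqref{eq:ModM}, but thanks to the $\nu$ prefactor in \eqref{bd:boundarynorm} it decays like $K^{5/4}e^{-3A/8}e^{-\kappa\tau}$, which is harmless once $A$ is large.
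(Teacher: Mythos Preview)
Your proof is correct and follows essentially the same approach as the paper: you derive \eqref{eq:Modnu} from the identity $\nu_\tau+\tfrac{d-2}{2}\nu=(d-2)(\tfrac{R_\tau}{R}+\tfrac12)\nu-\tfrac{M_\tau}{M}\nu$ together with \eqref{eq:ModR}--\eqref{eq:ModM}, then inject the a priori bounds and bootstrap control to obtain exponential decay of the modulation increments and integrate. The only cosmetic difference is that the paper integrates $\tfrac{d}{d\tau}(e^{\tau/2}R)$ directly rather than $(\log R+\tau/2)_\tau$, but these are equivalent since $e^{\tau/2}R\in[1/4,4]$ in the bootstrap.
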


\begin{proof}[Proof of corollary  \ref{cor:mod}]

Recall $\nu=R^{d-2}/M$ and $M_0\geq \bar M_0$. We obtain the inequality \eqref{eq:Modnu} by combining \eqref{eq:ModR} and \eqref{eq:ModM}. Then, injecting \eqref{est:MR}, \eqref{bd:estimationtau}, \eqref{est:mqL2ome} and \eqref{bd:boundarynorm} into \eqref{eq:ModR} yields
\be
|\frac{d}{d\tau} (e^{\frac \tau 2}R)|\leq C(K,\bar M_0) (e^{-\frac{d-2}{2}\tau}+e^{-\kappa \tau}+e^{-\frac{d-2}{2}\tau}e^{-\kappa \tau}) \leq C(K,\bar M_0)  e^{- \kappa \tau},
\ee
for $\kappa\leq \frac{d-2}{2}$. Integrating between $\tau_0$ and $\tau$ using \eqref{bootstrap:choiceR0}:
$$
R(\tau)=e^{-\frac \tau 2}\left(e^{\frac{\tau_0}{2}}R(\tau_0)+\int_{\tau_0}^\tau \Oc_{K,\bar M_0}(e^{-\kappa \tilde \tau})d\tilde \tau\right)=e^{-\frac{\tau}{2}}(1+\Oc_{K,\bar M_0,\kappa}(e^{- \kappa \tau_0})).
$$
This gives the first inequalities in \eqref{bootstrap:improvedRM} upon choosing $\tau_0$ large depending on $\kappa,\bar M_0,K$. The second inequalities in \eqref{bootstrap:improvedRM} are obtained similarly using $M_\tau=\nu^{-1}M_s$. Then, if $m_q$ is trapped in $\Sc(\tau)$ for all $\tau \in [\tau_0, +\infty)$, we rewrite the above identity as
$$
R(\tau)=e^{-\frac \tau 2}\Bigl(\underbrace{e^{\frac{\tau_0}{2}}R(\tau_0)+\int_{\tau_0}^\infty \Oc_{K,\bar M_0}(e^{-\kappa \tilde \tau})d\tilde \tau}_{=\tilde R_\infty} -\int_\tau^\infty \Oc_{K,\bar M_0}(e^{-\kappa \tilde \tau})d \tilde \tau \Bigr)=e^{-\frac{\tau}{2}}(\tilde R_\infty+\Oc(e^{-\kappa \tau})),
$$
where the constant in the last $\Oc()$ depends on $\kappa,K,\bar M_0$. This results in \eqref{modulation:bd:asymptoticR}. The inequality \eqref{modulation:bd:asymptoticM} is obtained similarly using \eqref{eq:ModM}, and \eqref{modulation:bd:asymptoticnu}  follows from \eqref{modulation:bd:asymptoticR} and \eqref{modulation:bd:asymptoticM} as $\nu=R^{d-2}M^{-1}$. This ends the proof of the Corollary.
\end{proof}

\begin{proof}[Proof of Lemma \ref{lemm:mod}]

\textbf{Step 1}. \emph{Computation of $R$}. We claim that for $\tau$ large enough,
\be \label{mod:bd:Rtauinter}
\left| \frac{R_\tau}{ R} + \frac{1}{2} \right|  \lesssim   e^{-\frac{A}{4}}\|m_q\|_{\inn}  + \nu +A \left| \frac{M_s}{M} \right|.
\ee
To show \eqref{mod:bd:Rtauinter}, we differentiate \eqref{orthogonality2} with respect to $s$, use equation \eqref{eq:mqxis} and the localization of $\chi_{_A}$ to get 
\be \label{mod:id:Rtaupreliminary}
0 = \int_{-1/\nu}^{+\infty} \Big[ \Ls_0 (m_q) + L(m_q) + \frac{m_q \partial_\xi m_q}{(1 +\nu \xi )^{d-1}} + \Psi \Big] \chi_{_A} Q' \omega_0 d\xi.
\ee
We now compute the contribution of all terms above. For the first one, using $\Ls_0(Q') = 0$ we obtain $|\Ls_0 ( \chi_{_A})Q']| \lesssim A^{-1}e^{-|\xi|/2} \mathbf{1}_{\{A \leq |\xi| \leq 2A\}}$, so that using that $\Ls_0$ is self-adjoint in $L^2_{\omega_0}$:
\begin{align}
\nonumber &\left |\int_{-1/\nu}^{+\infty} \Ls_0 m_q \chi_{_A} Q' \omega_0 d\xi \right| = \left|\int_{-1/\nu}^{+\infty}  m_q \Ls_0 \Big[ \chi_{_A} Q' \Big]\omega_0 d\xi \right| \lesssim A^{-1}\int_{A\leq |\xi|\leq 2A} |m_q| e^{-\frac{|\xi|}{2}} \omega_0 d\xi  \\ 
 \label{mod:bd:inter1} & \quad \lesssim A^{-1}\left(\int_{ A\leq |\xi| \leq 2A} |m_q|^2 \omega_0 d\xi \right)^{\frac{1}{2}} \left(\int_{ A\leq |\xi|\leq 2A} e^{-|\xi|}\omega_0 d\xi \right)^{\frac{1}{2}} \lesssim A^{-1}e^{-\frac{A}{4}}\| m_q\|_{\inn},
\end{align}
where we used that $\omega_0 \lesssim e^{|\xi|/2}$ and \eqref{eq:normequiv} (valid for $A$ large enough). For the second, since $\bar Q=Q$ for $|\xi|\leq 4|\log \nu|+A+1$, one has 
\begin{align*}
 L(m_q)&= -(d-1)\nu \left(\frac 12 \xi +\frac{1}{1+\nu \xi} \right) \pa_\xi m_q + \left(\frac{R_\tau}{R}+\frac{1}{2} \right)\left(1+(d-1)\nu \xi \right)\pa_\xi m_q-\frac{M_s}{M}\left(m_q+\xi\pa_\xi m_q\right)\\
 &\qquad +\left(\frac{1}{(1+\xi\nu)^{d-1}}-1 \right)\bar Q \pa_\xi m_q+\left(\frac{1}{(1+\xi\nu)^{d-1}}-1 \right)m_q\pa_\xi \bar Q
\end{align*}
and hence for $|\xi|\leq \xi_{A,+}+1$, we have the rough estimate
\be \label{mod:bd:pointwiseL}
|L(m_q)| \lesssim \left( \left| \frac{R_\tau}{R}+\frac 12 \right|+\nu \langle \xi \rangle +\frac{|M_s|}{M}|\xi|\right)\pa_\xi m_q+\left(\frac{|M_s|}{M}+\nu|\xi|e^{-\frac{|\xi|}{2}}\right)|m_q|,
\ee
provided that $\nu$ is small enough, i.e. that $\tau_0$ is large enough depending on $\bar M_0$ from \eqref{bd:estimationtau}. Using \eqref{mod:bd:pointwiseL}, $|\pa_\xi Q | \lesssim e^{-|\xi|/2}$ and \eqref{eq:normequiv}  we estimate
\be  \label{mod:bd:inter2}
\left| \int_{-1/\nu}^{+\infty} L(m_q) \chi_{_A} Q' \omega_0 d\xi\right| \lesssim \left(  \left| \frac{R_\tau}{R} + \frac{1}{2}  \right| + \left| \frac{M_s}{M}\right|+\nu \right) \|m_q\|_{\inn} . 
\ee
The nonlinear term is estimated by Cauchy-Schwarz and \eqref{eq:normequiv},
\begin{equation}  \label{mod:bd:inter3}
\left| \int_{-1/\nu}^{+\infty} \frac{m_q \partial_\xi m_q}{(1 + \nu \xi)^{d-1}} \chi_{_A} Q' \omega_0 d\xi  \right| \lesssim \left| \int m_q^2\chi_A \omega_0  \right|^{\frac 12}\left| \int |\pa_\xi m_q|^2\chi_A \omega_0  \right|^{\frac 12} \lesssim \|m_q\|^2_{\inn}.
\end{equation}
Finally, for the error term, as $\bar \chi=1$ for $|\xi|\leq \xi_{A,+}+1$ we compute that there:
\begin{align}
\nonumber \Psi(\xi,s) &=  \left(\frac{R_\tau}{R}+\frac{1}{2} \right)\left(1+(d-1)\nu \xi \right)\pa_\xi Q-\frac{M_s}{M}\left( Q+\xi \pa_\xi  Q\right)  -(d-1)\nu \left(\frac 12 \xi +\frac{1}{1+\nu \xi} \right) \pa_\xi  Q \\
 \label{mod:bd:pointwisePsi} &\qquad +\left(\frac{1}{(1+\xi\nu)^{d-1}}-1 \right) Q \pa_\xi  Q -\nu \frac{d-1}{1+\nu\xi}\pa_\xi  Q 
\end{align}
so that using $Q\leq 1$ and $|\pa_\xi Q|\lesssim e^{-|\xi|/2}$, we obtain
\be  \label{mod:bd:inter4}
\int_{-1/\nu}^{+\infty} \Psi \chi_{_A}Q' \omega_0 d\xi = \left( \frac{R_\tau}{R} + \frac{1}{2}\right)\left( \int_{-1/\nu}^{+\infty} [Q']^2 \chi_{_A}\omega_0 d\xi  +O(\nu)\right) +  \; \Oc\left(A\left| \frac{M_s}{M} \right| +\nu \right).
\ee
Injecting \eqref{mod:bd:inter1}, \eqref{mod:bd:inter2}, \eqref{mod:bd:inter3}, \eqref{mod:bd:inter4} in \eqref{mod:id:Rtaupreliminary}, using \eqref{est:mqL2ome} shows \eqref{mod:bd:Rtauinter} for $\tau_0$ large enough.\\

\noindent \textbf{Step 2}. \emph{Computation of $M$}. We claim the following:
\be \label{mod:bd:Msinter}
\left| \frac{M_s}{M} \right|  \lesssim \nu \| \pa_\zeta m_\varepsilon \|_{\bd}+\left|\frac{R_\tau}{R}+\frac 12 \right| \left(\nu \| \pa_\zeta m_\varepsilon \|_{\textup{bou}}+\nu^2e^{-A/2} \right)+\nu^3 |\log \nu|e^{-A/2}.
\ee
To show it, we differentiate in time the orthogonality condition \eqref{orthCon2} and use the equation \eqref{eq:mqxis} to write 
\be \label{mod:id:Rtaupreliminary2}
0 = \int_{-1/\nu}^{+\infty} \Big[ \Ls_0 m_q + L(m_q) + \frac{m_q \partial_\xi m_q}{(1 +\nu \xi )^{d-1}} + \Psi(\xi,s)\Big] \bar \chi d\xi - \int_{-1/\nu}^{+\infty} m_q \pa_s \bar \chi  d\xi, 
\ee
where we write for short in this proof $\bar \chi= \bar \chi_{_{1,\xi_{A,+}}} (\xi)$. Recall that $\textup{supp}\big(\bar \chi \big) \subset (\xi_{A,+}-2, \xi_{A,+}+2)$. Using \eqref{def:Lop}, integrating by parts, and then using \eqref{est:mqbydev} and $\pa_\xi=\nu \pa_\zeta$, we estimate
\begin{align}
\nonumber \left|\int_{-1/\nu}^{\infty} \Ls_0 m_q  \bar \chi d\xi\right| & \lesssim \int_{-1/\nu}^{+\infty} |\pa_\xi m_q| \big(\big|\pa_\xi \bar \chi \big|  + \bar \chi \big)+  |m_q|| \pa_\xi Q| \bar \chi \Big) d\xi \\
 \label{mod:bd:inter11} & \qquad   \lesssim \|\partial_\xi m_q\|_{L^\infty(\xi_{A,+}-2,\xi_{A,+}+2)} \ = \  \|  m_\varepsilon \|_{\bd}.
\end{align}
Using \eqref{mod:bd:pointwiseL}, \eqref{est:mqbydev} and $\textup{supp}\big(\bar \chi \big) \subset (\xi_{A,+}-2, \xi_{A,+}+2)$, we get that:
\be  \label{mod:bd:inter12}
 \left|\int_{-1/\nu}^{+\infty} L(m_q)  \bar \chi d\xi\right| \lesssim \|  m_\varepsilon \|_{\bd} \left(  \left| \frac{R_\tau}{R} + \frac{1}{2} \right| + \nu |\log \nu|+|\frac{M_s}{M}||\log \nu| \right).
\ee
For the nonlinear term, we have by \eqref{est:mqbydev},
\be  \label{mod:bd:inter13}
\left|\int_{-1/\nu}^{+\infty} \frac{m_q \pa_\xi m_q}{(1 + \nu \xi)^{d-1}}  \bar \chi d\xi\right| \lesssim  \|  m_\varepsilon \|_{\bd}^2.
\ee
As $Q=1+O(e^{-|\xi|/2})$ and $|\pa_\xi Q|\lesssim e^{-|\xi|/2}$, we use \eqref{mod:bd:pointwisePsi} and $\textup{supp}\bar \chi$ to write
$$
\Psi(s,\xi)= -\frac{M_s}{M}\left(1+O(\nu^2|\log \nu|e^{-A/2})\right)+O\left(|\frac{R_\tau}{R}+\frac 12|\nu^2 e^{-A/2} \right)+O(\nu^3 |\log \nu|e^{-A/2}).
$$
From the above identity, we deduce
\be  \label{mod:bd:inter14}
\int_{-1/\nu}^{\infty} \Psi\bar \chi d\xi = -\frac{M_s}{M}\left( \int_{\mathbb R} \chi d\xi +\Oc(\nu^2 |\log \nu|e^{-\frac A2})\right)+\Oc\left(\nu^2 e^{-\frac A2} (|\frac{R_\tau}{R}+\frac12 |+\nu |\log \nu|)\right).
\ee
Using \eqref{eq:relRMnu} and $\big|\pa_s \bar \chi \big| \leq \left|\nu_\tau\right|\mathbf{1}_{ \xi_{A,+}-2\leq \xi \leq \xi_{A,+}+2}$, \eqref{est:mqbydev}, we estimate 
\be  \label{mod:bd:inter15}
 \left|\int_{-1/\nu}^{+\infty} m_q \pa_s \bar \chi d\xi \right| \lesssim \nu \|  m_\varepsilon \|_{\bd} \left(1+ \left|\frac{R_\tau}{R}+\frac 12 \right|+\left|\frac{M_s}{M}\right| \right).
\ee
Injecting \eqref{mod:bd:inter11}, \eqref{mod:bd:inter12}, \eqref{mod:bd:inter13}, \eqref{mod:bd:inter14} and \eqref{mod:bd:inter15} in \eqref{mod:id:Rtaupreliminary2}, using that $\int_{\mathbb R}\chi d\xi>0$ and $ |\log \nu| \| m_\varepsilon \|_{\textup{bou}}\rightarrow 0$ as $\tau_0\rightarrow \infty$ from \eqref{est:mepLinf_out}, shows \eqref{mod:bd:Msinter}.\\

\noindent \textbf{Step 3}. \emph{End of the proof}. Combining \eqref{mod:bd:Msinter} and \eqref{mod:bd:Rtauinter} shows \eqref{eq:ModR} and \eqref{eq:ModM}.
\end{proof}

\subsection{Improved $\|m_q\|_\inn$ bound}
The following lemma shows that $\|m_q\|_\inn$ is a Lyapunov functional in the trapped regime.
\begin{lemma}[Monotonicity of $\|m_q\|_\inn$]\label{lemm:mqinn}  There exist $\delta_2 > 0$ and $C > 0$ such that the following holds. There exists $A^*>0$ such that for any $A\geq A^*$, for any $\kappa,\eta,\bar M_0>0$ and $K\geq e^{3A/10}$, for $\tau_0$ large enough, for all $s_0\leq s \leq s_1$:
\begin{align} \label{bd:innerlyapunov}
\frac{d}{ds}\|m_q(s)\|^2_\inn &\leq -\delta_2 \| m_q(s)\|^2_\inn + Ce^{\frac A 2}\nu^{-2} \| m_\varepsilon (\tau) \|_{\bd}^2 + C\nu^2.
\end{align}
\end{lemma}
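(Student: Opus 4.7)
The plan is to compute the time derivative of the energy $\|m_q\|_\inn^2 = -\langle m_q^\inn, \Ls_0 m_q^\inn\rangle_{L^2_{\omega_0}}$ using the self-adjointness of $\Ls_0$, substitute the equation obtained by multiplying \eqref{eq:mqxis} by $\chi^\inn$, and absorb each resulting contribution into a spectral-gap--driven dissipation. After writing
\begin{equation*}
\pa_s m_q^\inn \;=\; \Ls_0 m_q^\inn \;+\; [\pa_s - \Ls_0, \chi^\inn] m_q \;+\; \chi^\inn L(m_q) \;+\; \chi^\inn \frac{m_q\,\pa_\xi m_q}{(1+\nu\xi)^{d-1}} \;+\; \chi^\inn \Psi,
\end{equation*}
one has $\tfrac{d}{ds}\|m_q\|_\inn^2 = -2\int_{-1/\nu}^\infty \pa_s m_q^\inn \cdot \Ls_0 m_q^\inn\,\omega_0\,d\xi$, and the five contributions can be treated separately.

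For the leading term $-2\int (\Ls_0 m_q^\inn)^2 \omega_0$, I would invoke Proposition \ref{pr:spectralL0}: since $\mathrm{sp}(\Ls_0) = \{0\}\cup(-\infty,-1/16]$, spectral calculus gives $(-\Ls_0)^2 \geq \tfrac{1}{16}(-\Ls_0)$ on the $L^2_{\omega_0}$--orthogonal complement of $\pa_\xi Q$. The orthogonality \eqref{orthogonality2} controls $\langle \chi_A m_q, \pa_\xi Q\rangle_{L^2_{\omega_0}}$, and the discrepancy $(\chi^\inn - \chi_A)m_q$ is supported where $|\pa_\xi Q|\lesssim e^{-A/2}$, so this produces a correction $\lesssim e^{-A/2}\|m_q\|_\inn^2$ that is absorbed for $A$ large. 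This yields the dissipative backbone $-2\int(\Ls_0 m_q^\inn)^2\omega_0 \leq -\tfrac{1}{8}\|m_q\|_\inn^2$ (say), providing the negative term in \eqref{bd:innerlyapunov} with any $\delta_2 < 1/16$.

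The terms $\chi^\inn L(m_q)$, $\chi^\inn (m_q\pa_\xi m_q)/(1+\nu\xi)^{d-1}$ and $\chi^\inn \Psi$ are perturbative and handled by Cauchy--Schwarz against $\Ls_0 m_q^\inn$ with small absorbing weights $\epsilon\|\Ls_0 m_q^\inn\|_{L^2_{\omega_0}}^2$. Indeed, the modulation coefficients appearing in $L(m_q)$ are $o(1)$ by Lemma \ref{lemm:mod} (together with the a priori bounds of Lemma \ref{lem:aprioriestimates}), so this contribution is $o(1)\|m_q\|_\inn^2$ and absorbable; the nonlinear term is controlled by $\|m_q\|_\inn^3$ via the Sobolev embedding $H^1_{\omega_0}\hookrightarrow L^\infty$ and is negligible in the bootstrap regime $\|m_q\|_\inn \leq K e^{-\kappa\tau}$; the forcing $\chi^\inn \Psi$ is decomposed into its modulation components (already accounted for via Lemma \ref{lemm:mod}) and its remainder of size $O(\nu)$ which, after pairing with $\Ls_0 m_q^\inn$, yields exactly the $C\nu^2$ error in \eqref{bd:innerlyapunov}.

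The main obstacle is the boundary commutator $[\pa_s - \Ls_0,\chi^\inn]m_q$, which dictates the $e^{A/2}\nu^{-2}\|m_\varepsilon\|_\bd^2$ term. It is supported in the thin strips $[\xi_{A,-}-1,\xi_{A,-}]\cup[\xi_{A,+},\xi_{A,+}+1]$ and involves $\pa_\xi \chi^\inn\,\pa_\xi m_q$, $\pa_\xi^2\chi^\inn\, m_q$ and $\pa_s \chi^\inn\, m_q$ (the last coming from the $s$--dependence of $\xi_{A,\pm}$ through $\nu$, with $|\pa_s\xi_{A,\pm}|\lesssim \nu$ by \eqref{eq:Modnu}). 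On this support, the identity $\pa_\xi = \nu\pa_\zeta$ together with the definition \eqref{def:boundarynorm} gives $|m_q|,\,|\pa_\xi m_q|\lesssim \|m_\varepsilon\|_\bd$, while $\omega_0(\xi)\sim e^{|\xi_{A,\pm}|/2} = \nu^{-2}e^{A/2}$, hence
\begin{equation*}
\bigl\|[\pa_s - \Ls_0,\chi^\inn]m_q\bigr\|_{L^2_{\omega_0}}^2 \;\lesssim\; e^{A/2}\nu^{-2}\|m_\varepsilon\|_\bd^2.
\end{equation*}
A final Cauchy--Schwarz with an $\epsilon$--absorbing weight then produces the claimed $Ce^{A/2}\nu^{-2}\|m_\varepsilon\|_\bd^2$ term. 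The delicate point is to keep all $\epsilon$--weights small enough so that the absorbed fraction of $\|\Ls_0 m_q^\inn\|_{L^2_{\omega_0}}^2$ never exceeds what the spectral gap provides, which forces $\delta_2 < 1/16$ and explains why the boundary factor cannot be reduced below $e^{A/2}\nu^{-2}$.
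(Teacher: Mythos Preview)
Your outline is correct and follows the same energy-identity strategy as the paper: differentiate $-\langle m_q^\inn,\Ls_0 m_q^\inn\rangle_{L^2_{\omega_0}}$, use the spectral gap of $\Ls_0$ for the dissipative term, bound the commutator $[\pa_s-\Ls_0,\chi^\inn]m_q$ via $\|m_\varepsilon\|_\bd$ and the weight $\omega_0(\xi_{A,\pm})\sim\nu^{-2}e^{A/2}$, and absorb $L(m_q)$, the nonlinear term, and $\tilde\Psi$ by Cauchy--Schwarz.

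One point you gloss over is the modulation forcing $-\tfrac{M_s}{M}\chi^\inn Q$ inside $\Psi$. It is \emph{not} ``already accounted for via Lemma~\ref{lemm:mod}'': since $Q\approx 1$ for $\xi>0$ one has $\|\chi^\inn Q\|_{L^2_{\omega_0}}\sim \nu^{-1}e^{A/4}$, so a direct Cauchy--Schwarz pairing with $\Ls_0 m_q^\inn$ produces the large factor $|M_s/M|\,\nu^{-1}e^{A/4}$, and it is only after inserting the precise bound \eqref{eq:ModM} (whose $\|m_\varepsilon\|_\bd$ piece then squares to $\nu^{-2}e^{A/2}\|m_\varepsilon\|_\bd^2$) that this term fits the claimed inequality. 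The paper instead moves $\Ls_0$ onto $\chi^\inn(Q+\xi\pa_\xi Q)$ by self-adjointness, so that the bulk contribution uses $\Ls_0 Q\in L^2_{\omega_0}$ and the residual boundary commutator again picks up $\nu^{-2}e^{A/2}\|m_\varepsilon\|_\bd$; either way this term, not only $[\pa_s-\Ls_0,\chi^\inn]m_q$, is a genuine source of the $Ce^{A/2}\nu^{-2}\|m_\varepsilon\|_\bd^2$ error. A similar remark applies to $\chi^\inn L(m_q)$ and the nonlinear term on the strip $\xi_{A,+}\le|\xi|\le\xi_{A,+}+1$, where one must switch from $\|m_q\|_\inn$ to $\|m_\varepsilon\|_\bd$.
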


\begin{proof} In this part we shall write $\chi = \chi_{_{A}}^\inn$ introduced in \eqref{def:chi_inn} for sake of simplicity. We obtain from \eqref{eq:mqxis}, from the commutator relation
$$
\Ls_0 (\chi m_q)=\chi\Ls_0m_q+2\pa_\xi \chi \pa_\xi m_q+\left(\pa_\xi^2 \chi-(\frac 12-Q)\pa_\xi \chi\right)m_q,
$$
and from the self-adjointness of $\Ls_0$ in $L^2_{\omega_0}$, the energy identity
\begin{align}
\nonumber \frac{1}{2}\frac{d}{ds}\| m_q(s)\|^2_\inn &= -\int_{-\nicefrac{1}{\nu}} ^\infty \Ls_0 m_q^\inn\Big[ \Ls_0 m_q^\inn + \left(\frac 12  - Q \right)\pa_\xi \chi m_q - \pa_\xi^2\chi m_q - 2 \pa_\xi m_q \pa_\xi \chi + \pa_s \chi m_q  \Big]\omega_0 d\xi\\
\label{inner:id:energy} &\quad - \int_{-\nicefrac{1}{\nu}}^\infty  \Ls_0 m_q^\inn\Big[ L(m_q)\chi +\chi \frac{m_q \pa_\xi m_q}{(1  +\nu \xi)^{d-1}} + \Psi \chi \Big]  \omega_0 d\xi.
\end{align}

\paragraph{The linear term.} Since $m_q^\inn$ has compact support within $(-\nu^{-1},\infty)$, we may extend $m_q^\inn$ by $0$ for $\xi\leq -\nu^{-1}$ in order to apply Lemma \ref{lemm:coerL02}. Using \eqref{orthogonality2} and \eqref{def:chi_inn} we obtain $\int_{\mathbb R}m_q^\inn \pa_\xi Q \chi_A \omega_0d\xi=0$. Applying \eqref{coer:id:coercivity} and using \eqref{def:Lop} yield
\be \label{inner:bd:linear}
\int_{-\nicefrac{1}{\nu}} ^\infty |\Ls_0 m_q^\inn|^2\omega_0 d\xi \geq \delta_1 \| m_q^\inn\|_{H^2_{\omega_0}}^2\geq \bar \delta \| m_q\|_{\inn}^2, \quad \textup{for some $\bar \delta>0$.}
\ee

\paragraph{The boundary terms.} By definition of $\chi$ and using \eqref{eq:Modnu} (implying $|\nu_\tau|\lesssim \nu$), we have 
\be \label{inner:bd:boundary}
\big|\pa_\xi^k \chi\big| \lesssim \mathbf{1}_{\{(\xi_{A,+} \leq |\xi|\leq \xi_{A,+} + 1)\}}, \quad \big|\pa_s \chi \big| \lesssim |\nu| \mathbf{1}_{\{(\xi_{A,+} \leq |\xi| \leq  \xi_{A,+} + 1)\}}.
\ee
Note that
\be \label{inner:id:omega0boundary}
\omega_0(\xi)\approx \nu^{-2}e^{\frac A2} \qquad \mbox{for}\quad \xi_{A,+} \leq |\xi| \leq \xi_{A,+} + 1,
\ee
we then estimate by using the two above inequalities, \eqref{def:boundarynorm} and $\pa_\xi=\nu \pa_\zeta$,
\be
\label{inner:bd:boundaryterm} \left|\int_{-\nicefrac{1}{\nu}} ^\infty \Big[ \left( \frac 12 - Q \right)\pa_\xi \chi m_q - \pa_\xi^2\chi m_q - 2 \pa_\xi m_q \pa_\xi \chi+\pa_s \chi m_q \Big]^2\omega_0 d\xi\right|  \lesssim \nu^{-2} e^{\frac A2} \| \pa_\zeta m_\varepsilon \|_{\bd}^2.
\ee

\paragraph{The generated error term.} We recall from \eqref{mod:bd:pointwisePsi} that for $|\xi|\leq \xi_{A,+}+1$,
\begin{align}  \label{inner:id:Psi}
\Psi &=  \left(\frac{R_\tau}{R}+\frac{1}{2} \right)\pa_\xi Q-\frac{M_s}{M}\left( Q+\xi \pa_\xi Q\right) +\tilde \Psi,\\
\nonumber \tilde \Psi &= \left(\frac{R_\tau}{R}+\frac{1}{2} \right) (d-1)\nu \xi \pa_\xi Q  -(d-1)\nu \left(\frac 12 \xi +\frac{1}{1+\nu \xi} \right) \pa_\xi  Q \\
&\qquad +\left(\frac{1}{(1+\xi\nu)^{d-1}}-1 \right) Q \pa_\xi  Q -\nu \frac{d-1}{1+\nu\xi}\pa_\xi  Q .
\end{align}
For the first term, we use the fact that $\Ls_0$ is self-adjoint in $L^2_{\omega_0}$, $\Ls_0 \pa_\xi Q = 0$ and \eqref{def:Lop}, then Cauchy-Schwarz, \eqref{inner:bd:boundary}, $|\pa_\xi Q|\lesssim e^{-|\xi|/2}$, and $\omega_0\approx e^{|\xi|/2}$ to write
\begin{align}
\nonumber &\left|\left( \frac{R_\tau}{R} + \frac{1}{2} \right)\int_{-\nicefrac{1}{\nu}}^\infty \Ls_0 m_q^\inn \pa_\xi Q \chi \omega_0 d\xi \right| \\
\label{inner:bd:inter11} &\quad =\left|\left( \frac{R_\tau}{R} + \frac{1}{2} \right)\int_{-\nicefrac{1}{\nu}}^\infty m_q^\inn \left((\pa_\xi^2\chi-(\frac 12 -Q)\pa_\xi \chi)\pa_\xi Q+2\pa_\xi \chi \pa_\xi^2 Q\right)\omega_0 d\xi \right|\\
\nonumber &\quad  \lesssim \left|\frac{R_\tau}{R} + \frac{1}{2} \right| \|m_q\|_\inn  \left(\int_{\xi_{A,+}}^{\xi_{A,+}+1} e^{-\frac{|\xi|}{2}}d\xi \right)^\frac{1}{2} \ \lesssim \ \nu e^{-\frac A4} \left|\frac{R_\tau}{R} + \frac{1}{2} \right| \|m_q\|_\inn \ \lesssim \ \nu  \|m_q\|_\inn,
\end{align}
where we used the rough estimate $e^{-A/4}|R_\tau/R+1/2|\lesssim 1$ from \eqref{eq:ModR} for the last inequality. For the second term, using the self-adjointness of $\Ls_0$, \eqref{def:Lop}, then Cauchy-Schwarz, $|\Ls_0 Q|\lesssim e^{-|\xi|/2}$, \eqref{inner:bd:boundary}, $Q=1+O(e^{-|\xi|/2})$, $|m_q^\inn|\lesssim |m_q|$ and $\omega_0\approx e^{|\xi|/2}$ and \eqref{est:mqbydev} yields
\begin{align*}
& \left|  \intb  \Ls_0 m_q^\inn (Q+\xi\pa_\xi Q) \chi \omega_0 \right| = \Biggl| \intb  m_q^\inn \Ls_0(Q+\xi\pa_\xi Q) \chi \omega_0d\xi \\
&\quad  \qquad + \int_{-\nicefrac{1}{\nu}}^\infty m_q^\inn \left((\pa_\xi^2\chi-(\frac 12 -Q)\pa_\xi \chi)(Q+\xi\pa_\xi Q) +2\pa_\xi \chi (2\pa_\xi Q+\xi\pa_\xi^2 Q)\right)\omega_0 d\xi \Biggr|\\
&\quad \lesssim  \|m_q^\inn \|_{L^2_{\omega_0}}+\int_{\xi_{A,+}\leq |\xi| \leq \xi_{A,+}+1} |m_q(\xi)|e^{\frac{|\xi|}{2}} d\xi \ \lesssim \  \|m_q\|_\inn + \| m_\varepsilon \|_{\bd} \nu^{-2}e^{\frac A2}. 
\end{align*}
Using \eqref{eq:ModM}, \eqref{est:mqL2ome} and \eqref{bd:boundarynorm} (so that $\nu^{-1}\|  m_\varepsilon \|_{\bd} \lesssim 1 $), we get the rough bound $ |\frac{M_s}{M}|\lesssim \nu $. Thus, 
$$
|\frac{M_s}{M}| \|m_q\|_\inn \lesssim \nu  \|m_q\|_\inn.
$$
Second, using \eqref{eq:ModM} and the inequality $xy\leq x^2/2+y^2/2$ yields
\begin{align*}
 |\frac{M_s}{M}| \| \pa_\zeta m_\varepsilon \|_{\bd} \nu^{-1}e^{\frac A2} & \lesssim \nu |\log \nu|\|  m_\varepsilon \|_{\bd}+e^{-A/4} \|  m_\varepsilon \|_{\bd} \| m_q\|_{\inn}+\nu^{-2} \|  m_\varepsilon \|_{\bd}^2 e^{\frac A2} \\
& \lesssim \nu^4 |\log \nu|^2 e^{-\frac A2}+e^{-A}\nu^2  \| m_q\|_{\inn}+\nu^{-2} \|  m_\varepsilon \|_{\bd}^2 e^{\frac A2} .
\end{align*}
We conclude by using the three previous inequalities,
\be \label{inner:bd:inter12}
 \left| \frac{M_s}{M} \intb  \Ls_0 m_q^\inn Q \chi \omega_0 \right|  \lesssim \nu \|m_q\|_\inn + \nu^4 |\log \nu|^2 e^{-\frac A2}+\nu^{-2} \|  m_\varepsilon \|_{\bd}^2 e^{\frac A2}.
 \ee
To estimate the remaining term, using \eqref{eq:ModR} and $|\pa_\xi Q|\lesssim e^{-|\xi|/2}$ we obtain $|\tilde \Psi (s,\xi)|\lesssim \nu \langle \xi \rangle e^{-|\xi|/2}$. Hence, we have by Cauchy-Schwarz and $\omega_0\approx e^{|\xi|/2}$,
$$
\left| \intb \Ls_0 m_q^\inn \tilde \Psi \chi \omega_0 \right| \lesssim \| \Ls_0 m_q^\inn \|_{L^2_{\omega_0}} \| \tilde \Psi \chi \|_{L^2_{\omega_0}}\lesssim \nu  \| \Ls_0 m_q^\inn \|_{L^2_{\omega_0}}.
$$
Injecting \eqref{inner:bd:inter11}, \eqref{inner:bd:inter12} and the above inequality in \eqref{inner:id:Psi}, then using \eqref{inner:bd:linear} and $xy\leq \mu x^2/2+\mu^{-1} y^2/2$ shows that
\begin{align}
 \label{inner:bd:interPsi}  & \left|  \intb  \Ls_0 m_q^\inn \Psi \chi \omega_0 \right|\leq C\left(  \nu  \| \Ls_0 m_q^\inn \|_{L^2_{\omega_0}}+ \nu^4 |\log \nu|^2 e^{-\frac A2}+\nu^{-2} \|  m_\varepsilon \|_{\bd}^2 e^{\frac A2} \right)\\
\nonumber & \leq  C\mu \| \Ls_0 m_q^\inn \|_{L^2_{\omega_0}}^2+C\mu^{-1}\nu^2+\nu^{-2} \|  m_\varepsilon \|_{\bd}^2 e^{\frac A2}  \ \leq \ \frac{1}{10} \| \Ls_0 m_q^\inn \|_{L^2_{\omega_0}}^2+C \nu^2+\nu^{-2} \|  m_\varepsilon \|_{\bd}^2 e^{\frac A2},
\end{align}
if $\mu>0$ has been chosen small enough.\\

\paragraph{The small linear term and the nonlinear term:} We first estimate using \eqref{mod:bd:pointwiseL}, \eqref{eq:ModR}, \eqref{eq:ModM} and \eqref{bd:pointwisemq} that for $|\xi|\leq \xi_{A,+}+1$:
$$
L(m_q)+ \frac{m_q \pa_\xi m_q}{(1 + \nu\xi)^{d-1}} =o(|\pa_\xi m_q|)+o(|m_q|),
$$
where the $o()$ is as $\tau_0\to \infty$, and is uniform for $|\xi|\leq \xi_{A,+}+1$. Hence, using the above inequality, then the decomposition \eqref{def:chi_inn}, and then \eqref{inner:id:omega0boundary} and $\omega_0(\xi)\approx e^{|\xi|^2/2}$:
\begin{align*}
\int_{-1/\nu}^\infty \chi^2 |L(m_q)+ \frac{m_q \pa_\xi m_q}{(1 + \nu\xi)^{d-1}}|^2\omega_0 d\xi & = \int_{|\xi|\leq \xi_{A,+}}^\infty...+\int_{\xi_{A,+} \leq |\xi|\leq \xi_{A,+}+1 }^\infty... \\
 &=o(\| m_q \|_{\inn}^2)+o(\nu^{-2}\|  m_\varepsilon \|_{\bd}^2 e^{A/2}).
\end{align*}
We thus obtain by using Cauchy-Schwarz, the above inequality and then \eqref{inner:bd:linear},
\begin{align}
 \label{inner:bd:inter5} & \left| \intb \Ls_0 m_q^\inn \left(\chi L(m_q)+ \chi  \frac{m_q \pa_\xi m_q}{(1 + \nu\xi)^{d-1}}\right)\omega_0 d\xi \right| \\
 \nonumber &\quad  \lesssim  \|\Ls_0 m_q^\inn\|_{L^2_{\omega_0}}\left(o(\| m_q \|_{\inn})+o( \nu^{-1}e^{A/4}\|  m_\varepsilon \|_{\bd} \right) \ = \ o\left( \|\Ls_0 m_q^\inn\|_{L^2_{\omega_0}}^2+\nu^{-2}e^{A/2} \| m_\varepsilon \|_{\bd}^2 \right).
\end{align}

\paragraph{Conclusion:} Injecting \eqref{inner:bd:linear}, \eqref{inner:bd:boundaryterm}, \eqref{inner:bd:interPsi} and \eqref{inner:bd:inter5} in \eqref{inner:id:energy} shows \eqref{bd:innerlyapunov} and concludes the proof of Lemma \ref{lemm:mqinn}.
\end{proof}

\subsection{Improved exterior bound}
In this subsection we improve the bootstrap bounds \eqref{est:mepLinf_out}  and \eqref{est:mepLinf_out2}. We first study the exterior zone $\zeta\geq \zeta_+=1+4\nu|\log \nu|$ (or $\xi\geq \xi_+=4|\log \nu|$). We have from \eqref{eq:phisfrsft}  $\bar Q_\nu(\zeta) =Q_\nu(\zeta) = 1 + \Oc(\nu^2)$ for $\zeta \geq \zeta_+$. We write the equation satisfied by $m_\ep$ as a linear equation: 
\begin{equation}\label{eq:mep}
\pa_\tau m_\ep = \As m_\ep + \Pc m_\ep + E \qquad \textup{for}\quad  \zeta \geq \zeta_+,
\end{equation}  
where the main order operator $\As$ and the lower order operator $\Pc $ (note that $\Pc $ depends on $m_\varepsilon$, i.e. we are including nonlinear transport terms in the operator $\Pc $) are
\begin{align}
\As &= \left( \frac{1}{\zeta^{d-1}} - \frac{1}{2}\zeta \right) \pa_\zeta   + \nu \pa_\zeta^2,  \qquad \quad \Pc = P_{1} \pa_\zeta + P_{0}, \label{def:AsPc}\\
P_{1} &= \frac{ Q_\nu - 1}{\zeta^{d-1}}  - \nu\frac{(d-1)}{\zeta}  + \frac{m_\ep}{\zeta^{d-1}} + \left(\frac{R_\tau}{R} + \frac{1}{2} \right) \zeta, \qquad P_{0}= \frac{\pa_\zeta Q_\nu}{\zeta^{d-1}} - \frac{M_\tau}{M}, \label{def:P1P0}
\end{align}
and the error $E$ is defined from \eqref{eq:Q}, 
\begin{align}
E&= \pa_\tau Q_\nu - \frac{M_\tau}{M} Q_\nu + \left[Q_\nu \left(\frac{1}{\zeta^{d-1}} -1\right)  - \frac{\zeta - 1}{2} -\nu\frac{(d-1)}{\zeta} + \left(\frac{R_\tau}{R} + \frac{1}{2} \right) \zeta \right]\pa_\zeta Q_\nu.\label{def:Eerror}
\end{align}
Equation \eqref{eq:mep} dampens derivatives in the sense that the equation for $m_{\ep, 1} = \pa_\zeta m_\ep$ is
\begin{equation}\label{eq:mep1}
\pa_\tau m_{\ep,1}= \As_1m_{\ep, 1} + \Pc_1 m_{\ep,1} + F \qquad \textup{for}\quad  \zeta \geq \zeta_+,
\end{equation}
where $\As_1$, $\Pc_1$ and $F$ are given by 
\begin{align}
\label{exterior:def:A1} \As_1  &=  - \left( \frac{d-1}{\zeta^{d}} + \frac{1}{2} \right) + \left(\frac{1}{\zeta^{d-1}} - \frac{1}{2}\zeta \right)\pa_\zeta  + \nu \pa_\zeta^2,\\
\label{exterior:def:P1} \Pc_1  &= P_1 \pa_\zeta + \big(\pa_\zeta P_{1} + P_0\big), \qquad F = \pa_\zeta E +\pa_\zeta P_0 m_\ep.
\end{align}
The damping of Equation \eqref{eq:mep1} is formalized using supersolutions. We introduce
\begin{equation} \label{exterior:def:phi1phi2}
\phi_1(\zeta, \tau) = \frac{1}{2} K^\frac{5}{4} e^{-\kappa \tau} e^{- \frac 38  \frac{\zeta -\zeta_+}{\nu}  }, \qquad \phi_2(\tau) = \frac{1}{2} e^{-\kappa \tau}\zeta^{d-1}.
\end{equation}

\begin{lemma} \label{lem:supersolutionmain}

Recall $ \hat \chi_{\eta}$ is defined by \eqref{def:hatchi_eta}. There exist $\eta^*(d)>0$ and $\kappa^*>0$, such that for any $0<\kappa\leq \kappa^*$ and $0<\eta\leq \eta^*$, for any $K,\bar M_0,A>0$, for $\tau_0$ large enough, one has for all $ \tau_0\leq \tau \leq \tau_1$ and $\zeta \geq  \zeta_+$,
\begin{equation}\label{est:Linearterms1}
\big( \pa_\tau - \As_1\big) \big(\phi_1 \hat \chi_{\eta}+ \phi_2 \big)(\zeta,\tau)  \geq  \frac{1}{16 \nu}\phi_1(\zeta, \tau) \hat \chi_{\eta} + \frac{3}{16}\phi_2(\tau).
\end{equation}

\end{lemma}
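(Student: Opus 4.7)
The proof is a direct verification of the differential inequality. The plan is to use linearity of $\pa_\tau - \As_1$ to treat $\phi_2$ and $\phi_1 \hat\chi_\eta$ separately, reduce the cut-off commutators from $\hat\chi_\eta$ to exponentially small contributions, and compute the bulk term via the modulation equations from Corollary \ref{cor:mod}.

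For $\phi_2 = \frac{1}{2}e^{-\kappa \tau}\zeta^{d-1}$, I would use $\pa_\zeta \phi_2 = (d-1)\zeta^{-1}\phi_2$ and $\pa_\zeta^2 \phi_2 = (d-1)(d-2)\zeta^{-2}\phi_2$, together with the algebraic cancellation
\[
-\frac{d-1}{\zeta^d}\phi_2 + \Bigl(\frac{1}{\zeta^{d-1}} - \frac{\zeta}{2}\Bigr)\frac{d-1}{\zeta}\phi_2 = -\frac{d-1}{2}\phi_2,
\]
which gives $\As_1 \phi_2 = -\tfrac{d}{2}\phi_2 + O(\nu)\phi_2$ uniformly on $\{\zeta \geq \zeta_+\}$ (since $\zeta_+ \to 1$ keeps $\zeta^{-2}$ bounded). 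Hence $(\pa_\tau - \As_1)\phi_2 = (\tfrac{d}{2} - \kappa + O(\nu))\phi_2 \geq \tfrac{3}{16}\phi_2$ once $\kappa \leq \kappa^*$ and $\tau_0$ is large, using $d/2 \geq 3/2$ for $d \geq 3$.

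For the $\phi_1 \hat\chi_\eta$ part, I would first extract the Leibniz commutators
\[
(\pa_\tau - \As_1)(\phi_1 \hat\chi_\eta) = \hat\chi_\eta (\pa_\tau - \As_1)\phi_1 - \phi_1\bigl[(\zeta^{1-d}- \tfrac{\zeta}{2})\pa_\zeta\hat\chi_\eta + \nu \pa_\zeta^2 \hat\chi_\eta\bigr] - 2\nu\pa_\zeta \phi_1\pa_\zeta\hat\chi_\eta.
\]
The last two terms are supported on $\{\eta \leq |\zeta - 1| \leq 2\eta\}$, where $\phi_1 \lesssim K^{5/4}e^{-\kappa\tau}\nu^{3/2}e^{-3\eta/(8\nu)}$ is exponentially small; hence they are negligible for $\tau_0$ large (depending on $\eta$). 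On the bulk $\{\zeta_+ \leq \zeta \leq 1+\eta\}$, I would compute $\pa_\zeta\phi_1 = -\tfrac{3}{8\nu}\phi_1$ and $\pa_\zeta^2 \phi_1 = \tfrac{9}{64\nu^2}\phi_1$, and use the modulation equations $\nu_\tau = -\tfrac{d-2}{2}\nu(1 + o(1))$ and $\zeta_+'(\tau) = 4\nu_\tau(|\log\nu|-1)$ to compute $\pa_\tau\phi_1/\phi_1 = -\kappa + \tfrac{3\zeta_+'}{8\nu} + \tfrac{3(\zeta - \zeta_+)\nu_\tau}{8\nu^2}$. Combining everything, the $1/\nu$-coefficient of $(\pa_\tau - \As_1)\phi_1/\phi_1$ reduces, near $\zeta = 1$, to the algebraic expression $\beta(\zeta^{1-d} - \zeta/2) - \beta^2 - \beta(d-2)(\zeta-\zeta_+)/2$ with $\beta = 3/8$, which is positive and provides the required dissipation.

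The main obstacle lies in matching the constant $\tfrac{1}{16\nu}$: it corresponds to the interplay between transport $(\zeta^{1-d} - \zeta/2)\pa_\zeta$ and viscosity $\nu \pa_\zeta^2$ applied to the exponential profile $\phi_1$. The exponent $\beta = 3/8$ is chosen in $(1/4, 1/2)$ precisely so that $\beta(1/2-\beta) > 0$, giving a positive leading $1/\nu$ damping; the exact constant on the right-hand side is recovered by carefully tracking the $O(|\log\nu|)$ corrections coming from $\zeta_+'$ and $\nu_\tau$ together with the positive zeroth-order contribution $\tfrac{d-1}{\zeta^d} + \tfrac{1}{2} \geq d - 1/2$ from $\As_1$. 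This algebra is elementary but requires bookkeeping care, and the smallness parameters $\eta^*, \kappa^*$ are chosen at the end to absorb all lower-order corrections.
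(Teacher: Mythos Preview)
Your approach is correct and matches the paper's proof: split by linearity, compute $(\pa_\tau-\As_1)\phi_2$ directly, handle the commutator terms $[\As_1,\hat\chi_\eta]\phi_1$ as exponentially small contributions (the paper absorbs them into a small multiple of $\phi_2$), and establish $(\pa_\tau-\As_1)\phi_1 \geq c\nu^{-1}\phi_1$ on $[\zeta_+,1+2\eta]$ via the algebra $\beta(\tfrac12-\beta)>0$ for $\beta=\tfrac38$ combined with $\nu_\tau/\nu\to-(d-2)/2$.

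Two minor slips to correct. First, on the support of $\pa_\zeta\hat\chi_\eta$ one has $\phi_1 \lesssim K^{5/4}e^{-\kappa\tau}\,\nu^{-3/2}\,e^{-3\eta/(8\nu)}$ (the power of $\nu$ is $-3/2$, not $+3/2$, coming from $e^{\frac38\cdot 4|\log\nu|}=\nu^{-3/2}$); the conclusion is unaffected since $e^{-3\eta/(8\nu)}$ dominates. Second, the two $\pa_\tau$-contributions you list, $\tfrac{3\zeta_+'}{8\nu}$ and $\tfrac{3(\zeta-\zeta_+)\nu_\tau}{8\nu^2}$, combine exactly into $\tfrac{3}{8\nu}\cdot\tfrac{\nu_\tau}{\nu}(\zeta-1-4\nu)$, so the $|\log\nu|$ pieces cancel and no separate bookkeeping is needed; the resulting bracket is $\zeta^{1-d}-\tfrac{\zeta}{2}-\tfrac38+\tfrac{\nu_\tau}{\nu}(\zeta-1-4\nu)$, which is $\tfrac18+O(\eta)+O(\nu)$ near $\zeta=1$. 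This is how the paper organizes the computation, and it removes the need to track the logarithmic corrections you mention.
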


\begin{proof}

We first compute
\begin{align*}
(\pa_\tau -\As_1) (\phi_1 \hat \chi_{\eta}) =  \hat \chi_{\eta} \big( \pa_\tau \phi_1 - \As_1 \phi_1 \big) - [\As_1, \hat \chi_{\eta}] \phi_1,
\end{align*}
with the commutator 
$$
[\As_1, \hat \chi_{ \eta}] = 2\nu {\hat \chi}'_{ \eta} \pa_\zeta  + \left[\nu  {\hat \chi}''_{ \eta} + \left( \frac{1}{\zeta^{d-1}} - \frac{1}{2}\zeta \right)  {\hat \chi}'_{ \eta} \right] .
$$
Recall $\zeta_+=1+4\nu |\log \nu|$. We compute using \eqref{exterior:def:phi1phi2} and \eqref{exterior:def:A1}:
\begin{align*}
\frac{\pa_\tau \phi_1 - \As_1 \phi_1}{\phi_1} &= \frac{3}{8\nu}\left[ \frac{1}{\zeta^{d-1}} - \frac{1}{2}\zeta - \frac 38 + \frac{\nu_\tau}{\nu} \big(\zeta - 1 - 4\nu \big) \right]+ \frac{d-1}{\zeta^d} + \frac{1}{2} - \kappa.
\end{align*}
Since $\frac{\nu_\tau}{\nu} = -\frac{d-2}{2}+o(1)$ (a consequence of \eqref{eq:Modnu}) where the $o(1)$ is as $\tau_0\to \infty$, there is a constant $0< \eta \ll 1$ such that for $\tau_0$ large enough
$$ \frac{1}{\zeta^{d-1}} - \frac{1}{2}\zeta - \frac 38 + \frac{\nu_\tau}{\nu} \big(\zeta - 1 - 4\nu \big) \geq \frac{1}{16} , \quad \textup{for}\;\; \zeta \in[1, 1+2\eta]. $$
We also have for $\kappa<1/2$, using again $\frac{\nu_\tau}{\nu} = -\frac{d-2}{2}+o(1)$, $\frac{d-1}{\zeta^d} + \frac{1}{2} - \kappa > 0$ for $\zeta > 0$. Hence, combining the three above equality and inequalities we end up with 
\be \label{supersolution:inter1}
\pa_\tau \phi_1 - \As_1 \phi_1\geq \frac{1}{16\nu} \phi_1 \quad \textup{for}\;\; \zeta \in [1, 1 + 2\eta].
\ee
Using that the support of $\hat \chi_{\tilde{ \eta}}'$ and $\hat \chi_{\tilde{ \eta}}''$ is $1+\eta \leq \zeta  \leq 1+ 2 \eta$, \eqref{exterior:def:phi1phi2}, $\zeta_+=1+4\nu |\log \nu|$, and that for $\zeta\geq 1+\eta$ there holds $e^{-\frac 38 \frac{\zeta-\zeta_+}{\nu}}\leq \nu^{-\frac 32}e^{-\frac{3 \eta}{8\nu}}$ we estimate 
\be \label{supersolution:inter2}
\Big| [\As_1, \hat \chi_{\eta}] \phi_1 \Big| \lesssim K^{\frac{5}{4}}  e^{-\kappa \tau} \nu^{-\frac 32} e^{- \frac{3\eta}{8\nu}} \leq \nu \phi_2,
\ee
for $\tau_0$ large enough depending on $\eta,K$. A direct computation using \eqref{exterior:def:phi1phi2} yields for $\kappa<\frac 14$:
\begin{equation}\label{eq:varphi2}
\pa_\tau \phi_2 - \As_1 \phi_2 = \left[ - \kappa + \frac{1}{2}+\frac{d-1}{2\zeta}-\nu \frac{(d-1)(d-2)}{\zeta^2} \right] \phi_2 \geq \frac{1}{4} \phi_2 \quad \textup{for} \;\; \zeta > 0.
\end{equation}
Combining \eqref{supersolution:inter1}, \eqref{supersolution:inter2} and \eqref{eq:varphi2} yields the desired estimate \eqref{est:Linearterms1} for $\tau_0$ large enough.
\end{proof}

\begin{lemma} \label{lem:rightboundary}

There exist $\kappa^*(d)>0$ and $A^*>0$, such that for any $0<\kappa\leq\kappa^*$, $A\geq A^*$ and $K>0$ with $e^{3A/10}\leq K\leq e^{\frac 32 A}$, for any $\bar M_0,\eta>0$, for $\tau_0$ large enough one has for all $s\geq s_0$:
\begin{equation} \label{bd:boundaryimproved}
\big|\pa_\xi m_q(s, \xi_-)\big|+\big|\pa_\xi m_q(s, \xi_+)\big| \lesssim  K \nu e^{-\kappa \tau}+\nu^2.
\end{equation}

\end{lemma}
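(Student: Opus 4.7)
The plan is to apply interior parabolic regularity to the equation \eqref{eq:mqxis} on a fixed-size parabolic cylinder around $(s, \xi_+)$, using the sharper inner $L^\infty$ bound on $m_q$ from Lemma \ref{lem:aprioriestimates} rather than the bootstrap bound \eqref{est:mepLinf_out} (which alone would only give $|\pa_\xi m_q(\xi_+)|\lesssim K^{5/4}\nu e^{-\kappa\tau}$, not $K\nu e^{-\kappa\tau}$). I concentrate on $\xi_+$; the case $\xi_-$ is symmetric.

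On the fixed window $I = [\xi_+-2, \xi_++2]$ the quantity $\nu\xi$ tends to $0$ as $\tau_0\to\infty$, so the coefficients appearing in \eqref{eq:mqxis} are smooth and uniformly bounded. I rewrite \eqref{eq:mqxis} as a strictly parabolic linear equation
\[
\pa_s m_q = \tilde{\Lc}\, m_q + \Psi,
\]
where $\tilde{\Lc} = \Ls_0 + \tilde L$ and $\tilde L$ collects the linear operator $L(\cdot)$ together with the transport $\frac{m_q}{(1+\nu\xi)^{d-1}}\pa_\xi$ (linear in $\pa_\xi$ with coefficient $\Oc(K\nu e^{-\kappa\tau})$ thanks to the pointwise bound \eqref{bd:pointwisemq}). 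All coefficients of $\tilde{\Lc}$ are smooth and uniformly bounded in $\nu$ and $K$, by Lemma \ref{lemm:mod} and the bootstrap. Interior parabolic $C^{1,\alpha}$ (Schauder) estimates on the parabolic cylinder $Q = [s-1,s] \times I$ then yield, for $s \geq s_0 + 1$,
\[
|\pa_\xi m_q(s, \xi_+)| \lesssim \|m_q\|_{L^\infty(Q)} + \|\Psi\|_{L^\infty(Q)}.
\]
For $s \in [s_0, s_0+1]$ the conclusion follows from the initial bound \eqref{bd:bootstrap:initialregularity} and the short-time smoothing of the flow.

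The first right-hand side is controlled by \eqref{bd:pointwisemq}: on $I$ one has $|\xi| \geq \xi_+ - 2$, hence $\|m_q\|_{L^\infty(Q)} \lesssim K e^{-\kappa\tau} e^{-(\xi_+-2)/4} \lesssim K\nu e^{-\kappa\tau}$. For $\Psi$, I use its explicit expression \eqref{mod:bd:pointwisePsi} (valid since $\bar\chi \equiv 1$ on $I$), together with $|\pa_\xi Q|, |Q-1| \lesssim e^{-|\xi|/2} \lesssim \nu^2$ near $\xi_+$, and the modulation bounds of Lemma \ref{lemm:mod} showing $|R_\tau/R + 1/2|$ and $|M_s/M|$ are bounded; these give $\|\Psi\|_{L^\infty(Q)} \lesssim \nu^2$. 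Combining yields the claimed estimate.

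The main obstacle is verifying the strict parabolicity of $\tilde{\Lc}$ uniformly in the parameters: one needs the coefficients collected in $\tilde L$ (especially the $K$-dependent contribution from the nonlinear transport) to remain uniformly bounded so that the Schauder constant does not depend on $K$. This is achieved for $\tau_0$ large enough since $K\nu e^{-\kappa\tau}\to 0$, and is compatible with the range $e^{3A/10}\leq K\leq e^{3A/2}$. The analogous estimate at $\xi_-$ then follows by the same argument, using the symmetric bounds on $m_q$ and $\Psi$ in a neighborhood of $\xi_-$.
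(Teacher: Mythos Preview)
Your approach is essentially the same as the paper's: both localize near $\xi_+$ and use parabolic regularization to pass from the pointwise bound on $m_q$ (coming from $\|m_q\|_\inn$ via Sobolev) to a bound on $\pa_\xi m_q$. The paper does this explicitly with the heat kernel and Duhamel, while you invoke black-box interior estimates; either works.

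There is one genuine slip in your estimate of $\Psi$. On $I$ you correctly note $|\pa_\xi Q|\lesssim \nu^2$, but the term $-\frac{M_s}{M}(Q+\xi\pa_\xi Q)$ is \emph{not} $O(\nu^2)$: since $Q\approx 1$ near $\xi_+$, this term is $\approx -\frac{M_s}{M}$. From \eqref{eq:ModM} and \eqref{bd:boundarynorm} one has
\[
\left|\frac{M_s}{M}\right|\lesssim \nu^2+\nu K^{5/4}e^{-3A/8}e^{-\kappa\tau},
\]
so the correct bound is $\|\Psi\|_{L^\infty(I)}\lesssim \nu^2+\nu K^{5/4}e^{-3A/8}e^{-\kappa\tau}$. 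This is harmless for the conclusion: using $K\leq e^{3A/2}$ gives $K^{1/4}e^{-3A/8}\leq 1$, hence $K^{5/4}e^{-3A/8}\leq K$, and the extra term is absorbed into $K\nu e^{-\kappa\tau}$. (This is exactly where the hypothesis $K\leq e^{3A/2}$ enters; your write-up never uses it.)

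Two smaller remarks. First, ``Schauder'' is a slight overreach: Schauder requires H\"older regularity of the source, which you have not established. The estimate you actually need, $\|\pa_\xi m_q\|_{L^\infty(Q_{1/2})}\lesssim \|m_q\|_{L^\infty(Q)}+\|\Psi\|_{L^\infty(Q)}$, follows from $L^p$ parabolic interior regularity in one space dimension. Second, your treatment of the initial layer $s\in[s_0,s_0+1]$ is too vague: interior estimates need a time buffer, so near $s_0$ you must use the initial derivative bound \eqref{bd:bootstrap:initialregularity} directly (as the paper does, propagating it via the $L^1$-bounded heat semigroup), not ``short-time smoothing''.
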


\begin{proof}

We only establish the estimate \eqref{bd:boundaryimproved} at $\xi=\xi_+$ since those estimate at $\xi=\xi_-$ can be obtained by a very similar computation. We use a standard parabolic regularization argument. We write $\chi=\chi_{1,\xi_+}$ to ease notations. Note $\textup{supp} (\chi_{1,\xi_+})\subset \{ | \xi-\xi_+|\leq 2\}$ and ${\bf 1}_{ \{ | \xi-\xi_+|\leq 2\}}\lesssim \chi_{2,\xi_+}$. We introduce $\tilde m_q=\chi m_q$ which solves from \eqref{eq:mqxis}:
\begin{align}
\label{boundary:id:evo} &\pa_s \tilde m_q=\pa_{\xi}^2 \tilde m_q+f, \qquad f=\tilde f+\chi \Psi, \\
\nonumber &\tilde f=\chi\left((Q-\frac 12)\pa_\xi m_q+\pa_\xi Q m_q+L(m_q)+\frac{m_q\pa_\xi m_q}{(1+\nu \xi)^{d-1}}\right)+(\pa_s \chi-\pa_\xi^2\chi)m_q-2\pa_\xi \chi \pa_\xi m_q.
\end{align}
We now estimate $f$. Using \eqref{mod:bd:pointwiseL}, \eqref{eq:ModR}, \eqref{eq:ModM} and \eqref{bd:pointwisemq}, we get $|\tilde f|\lesssim (|m_q|+|\pa_\xi m_q|)\chi_{2,\xi_+}$. Thus, applying Cauchy-Schwarz, then using \eqref{est:mqL2ome} and $\omega_0(\xi)\approx \nu^{-2}$ on $\textup{supp} (\chi_{2,\xi_+})$ yields
\be  \label{boundary:bd:inter3}
\| \tilde f\|_{L^2(\mathbb R)}\lesssim \| m_q\|_{\inn}\|\chi_{2,\xi_+} \sqrt{\omega_0}^{-1}\|_{L^2}\lesssim \nu K e^{-\kappa \tau}.
\ee
Next, using \eqref{eq:ModM} and \eqref{bd:boundarynorm} and $e^{A/10}\leq K$ yields $|M_s/M|\lesssim \nu^2+\nu K^{5/4}e^{-3A/8}e^{-\kappa \tau}$. Hence, we estimate from \eqref{mod:bd:pointwisePsi}, $Q(\xi)=1+O(e^{-|\xi|/2})$ and $|\pa_\xi Q(\xi)|\lesssim e^{-|\xi|/2}$, for all $|\xi-\xi_+|\leq 2$:
$$
|\Psi (s,\xi)|\lesssim |\frac{M_s}{M}|Q(\xi)+\left(|\frac{R_\tau}{R}+\frac 12|+|\frac{M_s}{M}||\log \nu|+\nu|\log \nu| \right)|\pa_\xi Q(\xi)|\lesssim \nu^2+\nu K^{5/4}e^{-3A/8}e^{-\kappa \tau}
$$
and hence,
\be \label{boundary:bd:inter4}
\| \chi \Psi \|_{L^2(\mathbb R)}\lesssim  \nu^2+\nu K^{5/4}e^{-3A/8}e^{-\kappa \tau}.
\ee
Injecting \eqref{boundary:bd:inter3} and \eqref{boundary:bd:inter4} in \eqref{boundary:id:evo} yields
\be \label{boundary:bd:inter2}
\| f(s) \|_{L^2(\mathbb R)}\lesssim  \nu K e^{-\kappa \tau}(1+K^{1/4}e^{-3A/8})+\nu^2.
\ee
For $s\geq s_0$, we introduce $\tilde s_0=\max(s-1,s_0)$ and get from \eqref{boundary:id:evo} the representation formula
\be \label{boundary:id:rep}
\tilde m_q=\underbrace{K_{s-\tilde s_0}*\tilde m_q(\tilde s_0)}_{=\tilde m_q^1}+\underbrace{\int_{\tilde s_0}^s K_{s-s'}*f(s')ds'}_{=\tilde m_q^2}, \qquad K_s(\xi)=(4\pi s)^{-\frac 12}e^{-\frac{\xi^2}{4s}}.
\ee
Note that $\| K_s\|_{L^1}=1$ and $\| \pa_\xi K_s\|_{L^2}\lesssim s^{-3/4}$ by direct computations. Hence, if $\tilde s_0=s_0$ then by Young's inequality, the localization of $\chi$, \eqref{bd:bootstrap:initialregularity} and $\pa_\xi=\nu\pa_\zeta$, \eqref{bd:bootstrap:initial} and \eqref{iq:LinfSobolev},
\begin{align} 
\nonumber & \|\pa_\xi \tilde m_q^1\|_{L^\infty(\mathbb R)} \ \lesssim  \  \| K_{s-s_0}\|_{L^1(\mathbb R)} \|\pa_\xi \tilde m_q(s_0)\|_{L^\infty (\mathbb R)} \\
\label{boundary:bd:inter5} &\qquad \quad \lesssim \|\pa_\xi m_q(s_0)\|_{L^\infty ( |\xi-\xi_+(s_0)|\leq 2 )}+\| m_q(s_0)\|_{L^\infty ( |\xi-\xi_+(s_0)|\leq 2 )} \  \lesssim \ \nu e^{-\kappa \tau},
\end{align}
while if $\tilde s_0=s-1$, then using \eqref{bd:pointwisemq} and the localization of $\chi$ yields
\be \label{boundary:bd:inter6}
\|\pa_\xi \tilde m_q^1\|_{L^\infty(\mathbb R)}\lesssim \|\pa_\xi K_1\|_{L^2(\mathbb R)}\| \tilde m_q(s-1)\|_{L^2(\mathbb R)}\lesssim \nu K e^{-\kappa \tau}.
\ee
Finally, using \eqref{boundary:bd:inter2} and $\int_{0}^1 s^{-3/4}ds<\infty$ we obtain
\be \label{boundary:bd:inter7}
\| \pa_\xi \tilde m_q^2\|_{L^\infty}\lesssim \int_{\tilde s_0}^s \| \pa_\xi K_{s-s'}\|_{L^2}\| f\|_{L^2}ds'\lesssim  \nu K e^{-\kappa \tau}(1+K^{1/4}e^{-3A/8})+\nu^2.
\ee
Injecting \eqref{boundary:bd:inter5}, \eqref{boundary:bd:inter6} and \eqref{boundary:bd:inter7} in \eqref{boundary:id:rep} and $K\leq e^{3 A/2}$ yields the estimate \eqref{bd:boundaryimproved}.
\end{proof}

\begin{lemma} \label{lemm:mep_mid} There exists $K^*\geq 1$ and $\kappa^*>0$ such that if $A,K,\kappa,\eta,\bar M_0$ satisfy the conditions of Lemmas \ref{lem:supersolutionmain} and \ref{lem:rightboundary}, with $K\geq K^*$ and $0<\kappa\leq \kappa^*$, then for all $\tau_0\leq \tau \leq \tau_1$:
\begin{equation}\label{est:mep_mid}
\big|\pa_\zeta m_\ep (\zeta, \tau)\big| \leq \phi_1(\zeta, \tau)  \hat \chi_{\eta}+  \phi_2(\tau) \qquad  \textup{for}\;\; \zeta \geq \zeta_+,
\end{equation}
where $\phi_1$ and $\phi_2$ are defined in \eqref{exterior:def:phi1phi2}, and $\hat \chi_{\eta}$ is introduced in \eqref{def:hatchi_eta}. 
\end{lemma}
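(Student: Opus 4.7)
\medskip

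\noindent\emph{Plan of proof.} The plan is to apply a parabolic comparison principle to the equation \eqref{eq:mep1} satisfied by $m_{\ep,1}=\pa_\zeta m_\ep$ on the domain $\{\zeta\geq \zeta_+,\ \tau_0\leq\tau\leq\tau_1\}$, showing that $\Phi:=\phi_1\hat\chi_\eta+\phi_2$ is a supersolution dominating $|m_{\ep,1}|$. Precisely, I would introduce $\Psi^\pm=\Phi\pm m_{\ep,1}$ and argue by contradiction: suppose \eqref{est:mep_mid} fails for the first time at some point $(\zeta_*,\tau_*)$ with $\tau_*\in(\tau_0,\tau_1]$ and $\zeta_*\geq \zeta_+$; at such a first contact point, either $\Psi^+(\zeta_*,\tau_*)=0$ or $\Psi^-(\zeta_*,\tau_*)=0$, together with $\pa_\tau\Psi^\pm\leq 0$, $\pa_\zeta\Psi^\pm=0$ (if $\zeta_*>\zeta_+$), and $\pa_\zeta^2\Psi^\pm\geq 0$.

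First I would check the initial and boundary data. At $\tau=\tau_0$, the assumption \eqref{bd:bootstrap:initialregularity} gives $|\pa_\zeta m_\ep|<\tfrac12 e^{-\kappa\tau_0}\zeta^{d-1}=\phi_2(\tau_0)\leq \Phi(\zeta,\tau_0)$, so the contact point must be at some $\tau_*>\tau_0$. At $\zeta=\zeta_+$, the identification $m_\ep\leftrightarrow m_q$ via the change of variables \eqref{def:xis} gives $\pa_\zeta m_\ep=\nu^{-1}\pa_\xi m_q$, and Lemma \ref{lem:rightboundary} yields $|\pa_\zeta m_\ep(\zeta_+,\tau)|\lesssim Ke^{-\kappa\tau}+\nu$, while $\Phi(\zeta_+,\tau)\geq \tfrac12 K^{5/4}e^{-\kappa\tau}$. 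Choosing $K\geq K^*$ large enough and $\tau_0$ large (so $\nu \ll e^{-\kappa \tau}$) makes $\Phi(\zeta_+,\tau)$ strictly larger than $|\pa_\zeta m_\ep(\zeta_+,\tau)|$, so the contact cannot occur on the boundary $\zeta=\zeta_+$. Thus $(\zeta_*,\tau_*)$ must be an interior point.

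Next I would verify the supersolution inequality at $(\zeta_*,\tau_*)$. Writing $\pa_\tau \Psi^\pm=(\pa_\tau-\As_1)\Phi+\As_1\Psi^\pm \mp \Pc_1 m_{\ep,1}\mp F$ and evaluating at the interior minimum (where $\As_1\Psi^\pm=\nu\pa_\zeta^2\Psi^\pm\geq 0$ and $m_{\ep,1}=\pm\Phi$), the condition $\pa_\tau\Psi^\pm\leq 0$ forces
\begin{equation*}
(\pa_\tau-\As_1)\Phi \leq |\Pc_1\Phi|+|F|.
\end{equation*}
Lemma \ref{lem:supersolutionmain} gives the lower bound $(\pa_\tau-\As_1)\Phi\geq \tfrac{1}{16\nu}\phi_1\hat\chi_\eta+\tfrac{3}{16}\phi_2$. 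The remaining work is therefore to prove that $|\Pc_1\Phi|+|F|$ is strictly smaller than this lower bound, which would yield a contradiction. For $\Pc_1=P_1\pa_\zeta+(\pa_\zeta P_1+P_0)$ given by \eqref{def:P1P0}, I would estimate each piece on $\zeta\geq\zeta_+$: the term $(Q_\nu-1)/\zeta^{d-1}$ is $O(\nu^2)$ since $Q_\nu-1=O(e^{-|\xi|/2})=O(\nu^2)$ for $\xi\geq\xi_+=4|\log\nu|$; the viscosity coefficient $\nu(d-1)/\zeta$ is $O(\nu)$; the transport $(R_\tau/R+1/2)\zeta$ is controlled by the modulation estimate \eqref{eq:ModR} combined with \eqref{est:mqL2ome} and \eqref{bd:boundarynorm}, yielding $O(\nu)+O(e^{-\kappa\tau}e^{-A/4}K)$; and $m_\ep/\zeta^{d-1}$ is controlled by integrating the bootstrap bounds \eqref{est:mepLinf_out} from $\zeta_+$, giving $O(\nu K^{5/4}e^{-\kappa\tau})$ on the support of $\hat\chi_\eta$ and $O(e^{-\kappa\tau}\zeta^d)$ globally. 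For the zero-order part one uses $|\pa_\zeta Q_\nu|\lesssim\nu$ on $\{\hat\chi_\eta=1\}$ and the modulation bound on $M_\tau/M$. The forcing $F=\pa_\zeta E+\pa_\zeta P_0 m_\ep$ is estimated analogously using \eqref{def:Eerror}, with all contributions localised in $\{\hat\chi_\eta=1\}$ up to exponentially small remainders since $\pa_\zeta Q_\nu$ is $O(\nu^{10})$ away from the ring. Multiplying these estimates by $|\pa_\zeta\Phi|+\Phi$ (noting $|\pa_\zeta\Phi|\lesssim \nu^{-1}\phi_1\hat\chi_\eta+\phi_2$), all contributions gain either a factor $\nu$, or $e^{-\kappa\tau}K^{-1/4}$, or $e^{-A/4}$ relative to $\tfrac{1}{16\nu}\phi_1\hat\chi_\eta+\tfrac{3}{16}\phi_2$, and are therefore absorbed for $\tau_0$ large enough and suitable $A,K$.

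The main obstacle is to track the transport coefficient $P_1\pa_\zeta\Phi$ on the set $\{\hat\chi_\eta=1\}$: there $\pa_\zeta\phi_1=-\tfrac{3}{8\nu}\phi_1$, producing a term of size $\nu^{-1}|P_1|\phi_1$ which must be absorbed into $\tfrac{1}{16\nu}\phi_1$. This is precisely where the sharp choice of the exponent $3/8$ in $\phi_1$ enters — the supersolution lemma has been tuned to leave a definite margin $1/16$ against the transport rate $3/8$ for $\zeta$ close to $1$, so the absorption works provided $|P_1|$ is $o(1)$, which is guaranteed by the bootstrap and modulation estimates recalled above. Once the interior contradiction is established the comparison principle closes, and \eqref{est:mep_mid} follows on the whole interval $[\tau_0,\tau_1]$.
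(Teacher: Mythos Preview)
Your proposal is correct and follows essentially the same approach as the paper: both establish $\Phi=\phi_1\hat\chi_\eta+\phi_2$ as a barrier by combining the supersolution estimate of Lemma~\ref{lem:supersolutionmain} with bounds $|\Pc_1\Phi|+|F|\leq \tfrac{1}{32\nu}\phi_1\hat\chi_\eta+\tfrac{1}{8}\phi_2$ on the lower-order terms, then check the initial data via \eqref{bd:bootstrap:initialregularity} and the boundary condition at $\zeta_+$ via Lemma~\ref{lem:rightboundary}, and conclude by the parabolic maximum principle. Your first-contact-point formulation is equivalent to the paper's direct appeal to the comparison principle; the only slips are harmless sign errors in the identity for $\pa_\tau\Psi^\pm$ (it should read $\pm\Pc_1 m_{\ep,1}\pm F$, and at contact $m_{\ep,1}=\mp\Phi$), which cancel once you pass to absolute values.
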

\begin{proof}

\textbf{Step 1}. \emph{Proof assuming a technical estimate.} The proof relies on the standard parabolic comparison principle, where we shall construct a super/sub solution for the equation satisfied by $\pa_\zeta m_\ep$. We claim the following: for $\tau_0$ large enough, for all $\tau\geq \tau_0$ and $\zeta \geq \zeta_+$,
\begin{equation}\label{est:remainder_mid}
\left| \Pc_1 \big(\phi_1(\zeta, \tau) \hat \chi_{\eta}(\zeta) + \phi_2(\tau)  \big) \right|  +|F(\zeta, \tau)| \leq  \frac{1}{32\nu}\phi_1(\zeta, \tau) \hat \chi_{\eta} + \frac{1}{8}\phi_2(\tau).
\end{equation}
We proceed with the proof of \eqref{est:mep_mid}, establishing \eqref{est:remainder_mid} later on. From \eqref{est:Linearterms1} and \eqref{est:remainder_mid}, we obtain that $\phi_1 \hat \chi_{\eta}+ \phi_2$ is a supersolution to \eqref{eq:mep1} for $\tau \geq \tau_0 $ and $\zeta \geq \zeta_+ $ thanks to \begin{equation} \label{supersolution:bd:inter1}
\big(\pa_\tau  - \As_1 - \Pc_1\big) \big(\phi_1(\zeta, \tau) \hat \chi_{\eta}+ \phi_2(\tau) \big)  - F > 0 .
\end{equation}
Next, at the initial time $\tau_0$ we have because of \eqref{bd:bootstrap:initialregularity} that for all $\zeta\geq \zeta_+(\tau_0)$,
\be \label{supersolution:bd:inter2}
m_{\varepsilon,1}(\tau_0,\zeta)\leq  \phi_2(\tau_0)\leq (\phi_1\hat \chi_{\eta}+ \phi_2)( \tau_0,\zeta).
\ee
At the boundary, we combine \eqref{bd:boundaryimproved}, \eqref{bd:estimationtau} and \eqref{exterior:def:phi1phi2} to get for all $\tau_0\leq \tau \leq \tau_1$:
\be \label{supersolution:bd:inter3}
m_{\varepsilon,1}(\tau,\zeta_+) \leq \phi_1( \tau,\zeta_+) \leq (\phi_1\hat \chi_{\eta}+ \phi_2) (\tau,\zeta_+),
\ee
if $\kappa$ is small enough, $K$ is large enough and then $\tau_0$ is large enough. Combining \eqref{eq:mep1}, \eqref{supersolution:bd:inter1}, \eqref{supersolution:bd:inter2} and \eqref{supersolution:bd:inter3}, we can apply the maximum principle for $\phi_1\hat \chi_{\eta}+ \phi_2-m_{\varepsilon,1}$ as a supersolution for the parabolic operator $\pa_\tau  - \As_1 - \Pc_1$ on the set $\{\tau_0\leq \tau \leq \tau_1, \ \zeta\geq \zeta_+\}$ that is nonnegative at its boundary, and we obtain $\phi_1 \hat \chi_{\eta}+ \phi_2-m_{\varepsilon,1}\geq 0$ on this set, i.e.
$$
m_{\varepsilon,1}(\zeta,\tau)\leq \phi_1(\zeta, \tau) \hat \chi_{\eta}(\zeta)+ \phi_2(\tau).
$$
The bound $-m_{\varepsilon,1}\leq \phi_1 \hat \chi_{\eta}+ \phi_2$ is obtained similarly. Combining these two bounds concludes the proof of \eqref{est:mep_mid}.\\
\noindent \textbf{Step 2}. \emph{Control of the lower order terms.} We now prove \eqref{est:remainder_mid}. Recall \eqref{def:P1P0} and \eqref{def:AsPc}. From \eqref{est:mqbydev} and \eqref{est:mepLinf_out}, we write for $\zeta  \geq \zeta_+$,
\begin{align}
\frac{|m_\ep(\zeta, \tau)|}{\zeta^{d-1}} &\leq \frac{1}{\zeta^{d-1}} \int_{1 + \nu \xi^*}^\zeta |\pa_\zeta m_\ep(\zeta')| d\zeta'  \leq \zeta K^{\frac 54}e^{-\kappa \tau}.\label{est:mep1}
\end{align}
Recall $\zeta\geq \zeta_+=1+4\nu |\log \nu|$ corresponds to $\xi \geq \xi_+=4|\log \nu|$. We  use the exponential decay $Q(\xi) = 1+\Oc(e^{-\xi/2})$ for $\xi \geq 0$ with $e^{-\xi/2}\lesssim \nu^{2}$ for $\zeta\geq \zeta_+$, \eqref{est:mep1}, and \eqref{eq:ModR} to estimate  for $ \zeta  \geq \zeta_+$, 
\begin{align*}
 |P_1(\zeta, \tau)| &\lesssim |Q_\nu - 1| +\frac \nu \zeta+ \frac{|m_\ep (\zeta, \tau)|}{\zeta^{d-1}} + \left|\frac{R_\tau}{R} + \frac{1}{2} \right|\zeta \\
& \lesssim e^{-\frac \xi2} + \nu   + \| \pa_\zeta m_\ep(\tau)\|_{L^\infty(\zeta\geq \zeta_+)} + \left( \nu+e^{-\frac A4}\|m_q(\tau)\|_\inn + A \| m_\varepsilon \|_{\textup{bou}}\right)\zeta \lesssim  K^\frac{5}{4} e^{-\kappa \tau}\zeta ,
\end{align*}  
where we used \eqref{est:mqL2ome} and \eqref{bd:boundarynorm} for the last inequality and took $\kappa $ small enough $\tau_0$ large enough, and similarly
\begin{align*}
|\pa_\zeta P_1(\zeta, \tau)| &\lesssim |\pa_\zeta Q_\nu (\zeta)|+|Q_\nu - 1|  + \nu + \frac{|m_\ep(\zeta, \tau)|}{\zeta^d}+ \frac{|\pa_\zeta m_\ep(\zeta, \tau)|}{\zeta^{d-1}} +\left|\frac{R_\tau}{R} + \frac{1}{2} \right|\lesssim K^\frac{5}4e^{-\kappa \tau},
\end{align*}
and, using in addition \eqref{eq:ModM} and $M_\tau=\nu^{-1}M_s$,
\begin{align*}
|P_0(\zeta, \tau)| \lesssim |\pa_\zeta Q_\nu (\zeta)| + \left|\frac{M_\tau}{M} \right| \lesssim \nu + \|m_q(\tau)\|_\inn+\nu^{-1}\| m_\ep(\tau)\|_{\textup{bou}} \lesssim K^\frac{5}4e^{-\kappa \tau}.
\end{align*}
Hence, using that $\phi_1\leq K^{5/4}\nu^{-3/2} e^{-\frac{3\eta}{8\nu}}$ for $\zeta\geq 1+\eta$ and $\phi_2=e^{-\kappa \tau}\zeta^{d-1}/2$:
\begin{align}
\nonumber & \big|\Pc_1 \phi_1(\zeta, \tau) \hat \chi_{\eta} \big| +\big|\Pc_1 \phi_2(\tau) \big|  \\
\nonumber &  \leq \ |P_1 \pa_\zeta \phi_1(\zeta, \tau)| \hat \chi_{\eta} + |\pa_\zeta P_1 + P_0 | \phi_1 \hat \chi_{\eta}+ |P_1 \pa_\zeta \hat \chi_{\eta}| \phi_1+\big( |\pa_\zeta P_1 + P_0 | \phi_2(\tau)+|P_1||\pa_\zeta \phi_2|\big) \\
\nonumber &  \lesssim \left( \frac{3}{8 \nu} |P_1| +  |\pa_\zeta P_1| + |P_0 | \right) \phi_1 \hat \chi_{\eta}   +|P_1|\phi_1 \mathbf{1}_{\{1+\eta\leq \zeta\leq 1+2\eta\}}+\big( |\pa_\zeta P_1| + |P_0 |+\zeta^{-1}|P_1|\big) \phi_2(\tau) \\
 & \lesssim \frac{3}{8\nu} K^\frac{5}{4} e^{-\kappa \tau} \phi_1 \hat \chi_{\eta}  + K^\frac{5}{2} \nu^{-\frac 32} e^{- \frac{3\eta}{8\nu}} \phi_2  +  K^{\frac{5}{4}} e^{-\kappa \tau} \phi_2(\tau) \leq \frac{1}{64\nu}\phi_1(\zeta, \tau) \hat \chi_{\eta} + \frac{1}{16}\phi_2(\tau), \label{exterior:bd:inter3}
\end{align}
for $\tau_0$ large enough.

We now estimate the source term $F= \pa_\zeta E +\pa_\zeta P_0 m_\ep$. Using \eqref{def:P1P0}, $|\pa_\xi^j Q(\xi)|\lesssim e^{-|\xi|/2}$ for $j=1,2$ and \eqref{est:mep1} we obtain for $\zeta  \geq \zeta_+ $,
\begin{align}
\nonumber & \big|\pa_\zeta P_0(\zeta) m_\ep(\zeta)\big| \ = \ \big|\pa_\zeta \big(\frac{\pa_\zeta Q_\nu}{\zeta^{d-1}} \big)(\zeta ) m_\ep(\zeta)\big|  \ \lesssim  \ \Big(|\pa_\zeta^2 Q_\nu(\zeta)| + |\pa_\zeta Q_\nu(\zeta)|\Big) \frac{|m_\ep(\zeta)|}{\zeta^{d-1}}\\
\label{exterior:bd:inter2} &\qquad \qquad \qquad \qquad  \lesssim \frac{1}{\nu^2} e^{-\frac{\xi}{2}}  \| \pa_\zeta m_\ep\|_{L^\infty(\zeta \geq \zeta_+)}.
\end{align}
Next, using $|\nu_\tau/\nu |\lesssim 1$ from \eqref{eq:Modnu} and $|\pa_\xi^j Q(\xi)|\lesssim e^{-|\xi|/2}$ for $j=1,2$, for $\zeta \geq \zeta_+$,
\begin{align*}
\big| \pa_\zeta \pa_\tau  Q_\nu (\zeta)\big| =\frac{1}{\nu}|\frac{\nu_\tau}{\nu}| \big| \pa_\xi Q(\xi) +\xi \pa_\xi^2 Q(\xi)\big|   \lesssim  \frac{1}{\nu} \xi e^{-\frac{\xi}{2}},
\end{align*}
and similarly, using that $\zeta-1\geq \nu $ yields the estimate
\begin{align*}
& \left| \pa_\zeta \left(  Q_\nu \pa_\zeta  Q_\nu \Big(\frac{1}{\zeta^{d-1}} - 1 \Big)  \right) \right| + \Big| \pa_\zeta \big[(\zeta  -1) \pa_\zeta Q_\nu\big] \Big| + \Big| \pa_\zeta \big[ \frac{\nu}{\zeta} \pa_\zeta  Q_\nu\big] \Big|\\
&\qquad \qquad \qquad \lesssim \left(\big| \pa_\zeta  Q_\nu \big|^2 + \big| \pa_\zeta^2  Q_\nu\big|\right) (\zeta -1) + |\pa_\zeta  Q_\nu| \ \lesssim \ \frac{1}{\nu}\xi e^{-\frac{\xi}{2}}.
\end{align*} 
We estimate using $M_\tau=\nu^{-1}M_s$, \eqref{eq:ModR} and \eqref{eq:ModM} for $\zeta \geq \zeta_+$:
\begin{align*}
\left|\left(\frac{R_\tau}{R} + \frac{1}{2} \right) \pa_\zeta (\zeta \pa_\zeta  Q_\nu) - \frac{M_\tau}{M}\pa_\zeta  Q_\nu \right| & \lesssim  \left|\frac{R_\tau}{R} + \frac{1}{2} \right| |\zeta \pa_\zeta^2  Q_\nu| + \left( \left|\frac{R_\tau}{R} + \frac{1}{2} \right| + \left| \frac{M_\tau}{M} \right| \right) |\pa_\zeta  Q_\nu|\\
&\qquad \lesssim  |\zeta \pa_\zeta^2  Q_\nu| + |\pa_\zeta  Q_\nu|  \ \lesssim  \ \frac{1}{\nu^2} \xi e^{-\frac \xi 2},
\end{align*}
where we used the rough estimate $\zeta\leq \xi$. Injecting the three above inequalities in \eqref{def:Eerror} shows $|\pa_\zeta E|\lesssim \nu^{-2}\xi e^{-\xi/2}$. Combining this with \eqref{exterior:bd:inter2} gives $|F|\lesssim  \nu^{-2}\xi e^{-\xi/2}$. Now, observe from the definition \eqref{exterior:def:phi1phi2} of $\phi_1$ and $\phi_2$ that $\xi e^{-\xi/2}\leq \nu^2|\log \nu| e^{\kappa \tau}\phi_1(\tau,\zeta)$ for $\zeta \geq \zeta_+=1+4\nu |\log \nu|$ for $K$ large enough, and $\xi e^{-\xi/2}\leq e^{-\eta/3\nu}\phi_2(\tau)$ for $\zeta \geq 1+\eta$ for $\tau_0$ large enough depending on $\eta$. Hence, for $ \zeta\geq \zeta_+$,
\begin{align*}
|F|\lesssim  \nu^{-2}\xi e^{-\xi/2}\lesssim  |\log \nu| e^{\kappa \tau}\phi_1(\tau,\zeta)\hat \chi_\eta+\nu^{-2}e^{-\frac{\eta}{3\nu}}\phi_2(\tau)\leq  \frac{1}{64\nu}\phi_1(\zeta, \tau) \hat \chi_{\eta} + \frac{1}{16}\phi_2(\tau),
\end{align*}
where we used \eqref{bd:estimationtau} and took $\kappa$ small enough, and then $\tau_0$ large enough. Combining the above inequality and \eqref{exterior:bd:inter3} shows the desired estimate \eqref{est:remainder_mid}.
\end{proof}

We now turn to the control of the solution over the interval $\zeta \in (0, \zeta_-)$. We have by \eqref{eq:phisfrsft},
\begin{equation}\label{eq:mepleft}
\pa_\tau m_\ep = \As^- m_\ep + \Pc^- m_\ep + E \qquad \textup{for}\quad  \zeta \leq \zeta_-,
\end{equation}  
where
\begin{align}
& E= \pa_\tau \bar Q_\nu + \left[  \bar Q_\nu \left(\frac{1}{\zeta^{d-1}} -1\right)  - \frac{\zeta - 1}{2}  -\nu\frac{(d-1)}{\zeta}  +  \left(\frac{R_\tau}{R} + \frac{1}{2} \right) \zeta \right] \pa_\zeta \bar Q_\nu  - \frac{M_\tau}{M} \bar Q_\nu.\label{def:Eerrorleft}\\
& \As^- = - \frac{1}{2}\zeta  \pa_\zeta   + \nu \left(\pa_\zeta^2-\frac{d-1}{\zeta}\pa_\zeta\right), \label{def:AsPcleft}\\
& \Pc^- = P_{1}^- \pa_\zeta + P_{0}^-, \quad P_{1}^- = \frac{ \bar Q_\nu }{\zeta^{d-1}}  + \frac{m_\ep}{\zeta^{d-1}} + \left(\frac{R_\tau}{R} + \frac{1}{2} \right) \zeta, \quad P_{0}^-= \frac{\pa_\zeta \bar Q_\nu}{\zeta^{d-1}} - \frac{M_\tau}{M}. \label{def:P1P0left}
\end{align}
The equation for $m_{\ep, 1} = \pa_\zeta m_\ep$ reads as
\begin{equation}\label{eq:mep1left}
\pa_\tau m_{\ep,1}= \As_1^- m_{\ep, 1} + \Pc_1^- m_{\ep,1} + F^- \qquad \textup{for}\quad  \zeta \leq \zeta_-,
\end{equation}
where
\begin{align}
\label{exterior:def:A1left} \As_1^-  &= -\frac{1}{2}\zeta \pa_\zeta - \frac{1}{2} +\nu \left(\pa_\zeta^2-\frac{d-1}{\zeta}\pa_\zeta + \frac{d-1}{\zeta^2}\right), \\
\label{exterior:def:P1left} \Pc_1^-  &= P_1^- \pa_\zeta + \big(\pa_\zeta P_{1}^- + P_0^-\big), \qquad F^- = \pa_\zeta E +\pa_\zeta P_0^- m_\ep.
\end{align}
We introduce
\begin{equation} \label{exterior:def:phi1phi2left}
\phi_1^-(\zeta, \tau) = \frac{1}{2} K^\frac{5}{4} e^{-\kappa \tau} e^{- \frac 38  \frac{\zeta_--\zeta}{\nu}  }, \qquad \phi_2^-(\zeta,\tau) = \frac{1}{2} \nu \zeta^{d-1} e^{-\kappa \tau}.
\end{equation}

\begin{lemma} \label{lem:supersolutionmainleft}

There exist $\eta^*(d)>0$ and $\kappa^*>0$, such that for any $0<\kappa\leq \kappa^*$ and $0<\eta\leq \eta^*$, for any $K,\bar M_0,A>0$, for $\tau_0$ large enough, one has for all $ \tau_0\leq \tau \leq \tau_1$ and $\zeta \leq  \zeta_-$,
\begin{equation}\label{est:Linearterms1}
\big( \pa_\tau - \As_1^-\big) \big(\phi_1^- \hat \chi_{\eta}+ \phi_2^- \big)(\zeta,\tau)  \geq  \frac{1}{16 \nu}\phi_1^-(\zeta, \tau) \hat \chi_{\eta} + \frac 14 \phi_2^-(\tau).
\end{equation}

\end{lemma}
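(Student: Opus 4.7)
The proof will closely mirror that of Lemma \ref{lem:supersolutionmain}, treating separately the bulk term $\phi_1^-\hat\chi_\eta$ via the commutator identity
\begin{equation*}
(\pa_\tau - \As_1^-)(\phi_1^-\hat\chi_\eta) = \hat\chi_\eta(\pa_\tau\phi_1^- - \As_1^-\phi_1^-) - [\As_1^-,\hat\chi_\eta]\phi_1^-,
\end{equation*}
and the outer term $\phi_2^-$ directly, then assembling the bounds.

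First I would compute $\pa_\tau\phi_1^- - \As_1^-\phi_1^-$. Using $\pa_\zeta\phi_1^- = \tfrac{3}{8\nu}\phi_1^-$ and $\pa_\zeta^2\phi_1^- = \tfrac{9}{64\nu^2}\phi_1^-$, together with the identity $\pa_\tau\!\bigl(\tfrac{\zeta_- - \zeta}{\nu}\bigr) = \tfrac{\nu_\tau}{\nu}\bigl(4 - \tfrac{1-\zeta}{\nu}\bigr)$ obtained from $\zeta_- = 1 + 4\nu\log\nu$, a direct calculation gives
\begin{equation*}
\frac{\pa_\tau\phi_1^- - \As_1^-\phi_1^-}{\phi_1^-} = \frac{3}{8\nu}\left[\frac{\zeta}{2} - \frac{3}{8} + \frac{(d-1)\nu}{\zeta} + \frac{\nu_\tau}{\nu}(1 - \zeta - 4\nu)\right] + \frac{1}{2} - \kappa - \nu\frac{d-1}{\zeta^2}.
\end{equation*}
Substituting $\nu_\tau/\nu = -(d-2)/2 + o(1)$ from \eqref{eq:Modnu}, the bracket equals $\tfrac{1}{8} + O(\nu)$ at $\zeta = 1$ and decreases by at most $(d-1)\eta + O(\nu)$ on the interval $[1-2\eta, 1]$. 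Choosing $\eta \leq \eta^*(d)$ small enough ensures the bracket stays bounded below by $\tfrac{1}{16}$, and then for $\kappa$ small and $\tau_0$ large we obtain $(\pa_\tau - \As_1^-)\phi_1^- \geq \tfrac{1}{16\nu}\phi_1^-$ on the support of $\hat\chi_\eta$.

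Next I would bound the commutator $[\As_1^-, \hat\chi_\eta] = 2\nu\hat\chi_\eta'\pa_\zeta + \nu\hat\chi_\eta'' - \bigl(\tfrac{\zeta}{2} + \nu\tfrac{d-1}{\zeta}\bigr)\hat\chi_\eta'$. Its coefficients are supported in $\{1-2\eta \leq \zeta \leq 1-\eta\}$, where $\zeta_- - \zeta \geq \eta - 4\nu|\log\nu| \geq \eta/2$ once $\tau_0$ is large. Therefore $\phi_1^- \lesssim K^{5/4} e^{-\kappa\tau} e^{-3\eta/(16\nu)}$ there, so that $|[\As_1^-,\hat\chi_\eta]\phi_1^-| \leq \nu\phi_2^-$ for $\tau_0$ large enough depending on $\eta$ and $K$.

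Finally, for the outer piece, since $\pa_\zeta\phi_2^- = (d-1)\phi_2^-/\zeta$ the diffusion part of $\As_1^-$ annihilates $\phi_2^-$ by the telescoping identity
\begin{equation*}
\nu\left[(d-1)(d-2) - (d-1)^2 + (d-1)\right]\frac{\phi_2^-}{\zeta^2} = 0,
\end{equation*}
leaving $\As_1^-\phi_2^- = -\tfrac{d}{2}\phi_2^-$, hence $\pa_\tau\phi_2^- - \As_1^-\phi_2^- = \bigl(\tfrac{\nu_\tau}{\nu} + \tfrac{d}{2} - \kappa\bigr)\phi_2^- = (1 - \kappa + o(1))\phi_2^- \geq \tfrac{1}{4}\phi_2^-$ for $\kappa \leq \kappa^*$ and $\tau_0$ large. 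Combining the three estimates yields \eqref{est:Linearterms1}. The main obstacle is the sign analysis of the bracket in the $\phi_1^-$ computation: on $\zeta < \zeta_-$ the transport field $-\zeta/2\,\pa_\zeta$ pushes toward the origin, away from the concentrating ring, so the positive contribution $\tfrac{\zeta}{2}$ only overcomes the diffusion loss $-\tfrac{3}{8}$ on a thin window where $\zeta$ is close to $1$, which is precisely what forces the smallness condition $\eta \leq \eta^*(d)$.
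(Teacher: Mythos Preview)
Your proof is correct and follows essentially the same approach as the paper's: splitting via the commutator identity, computing $(\pa_\tau-\As_1^-)\phi_1^-/\phi_1^-$ directly and bounding the bracket from below on $[1-2\eta,\zeta_-]$, controlling the commutator by exponential smallness of $\phi_1^-$ on $\mathrm{supp}(\hat\chi_\eta')$, and using the exact cancellation of the diffusion part on $\zeta^{d-1}$ for $\phi_2^-$. Your bracket carries $\zeta/2$ and $\tfrac{(d-1)\nu}{\zeta}$ explicitly whereas the paper writes $1/2$ and keeps $\tfrac{3(d-1)}{8\zeta}$ outside; these are equivalent rearrangements (yours is slightly more precise), and the subsequent sign analysis is the same.
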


\begin{proof}

By a direct computation, one obtains for all $ 1-2\eta \leq \zeta \leq 1$,
$$
\frac{\pa_\tau \phi_1^--\As_1^- \phi_1^-}{\phi_1^-} = \frac{3}{8\nu} \left( \frac 12-\frac 38+\frac{\nu_\tau}{\nu}(1-4\nu-\zeta)\right)+\frac 12 +\frac{3}{8\zeta}(d-1)-\kappa-\nu\frac{d-1}{\zeta^2} \geq \frac{1}{16 \nu},
$$
where we used the fact that $0<\nu \to 0$ uniformly as $\tau_0\to \infty$, $|\frac{\nu_\tau}{\nu}|\lesssim 1 $ from \eqref{eq:Modnu}, and took $\eta>0$ small enough. Using the fact that $\Big(\pa_\zeta^2 - \frac{d - 1}{\zeta} \pa_\zeta + \frac{d-1}{\zeta^2}   \Big) \zeta^{d-1} = 0$, we obtain
\be \label{bdbdbd}
\frac{\pa_\tau \phi_2^--\As_1^- \phi_2^-}{\phi_2^-}=\frac d2-\kappa+\frac{\nu_\tau}{\nu}=1-\kappa+o(1)\geq \frac 12,
\ee
where we used that $|\frac{\nu_\tau}{\nu}=\frac{d-2}{2}+o(1) $ from \eqref{eq:Modnu}. With a computation that is so similar to that establishing \eqref{supersolution:inter2} in the proof of Lemma \ref{lem:supersolutionmain}, so that we omit it, we moreover have for $\zeta \leq \zeta_-$,
$$
|[\As_1^- ,\hat \chi_\eta]\phi_1^-| =o(\phi_2^-),
$$
where $[\As_1^- ,\hat \chi_\eta]=\As_1^- \hat \chi_\eta-\hat \chi_\eta \As_1^-$ and $o()$ stands for $\tau_0\to \infty$ and is uniform in $\tau,\zeta$. Combining the three above inequalities yields the desired estimate \eqref{est:Linearterms1}.
\end{proof}

\begin{lemma} \label{lemm:mep_out} We assume that  $m_q(\tau) \in \Sc_{K, \kappa}(\tau)$ for $\tau \in [\tau_0 , \tau_1]$ and $\tau_1 > \tau_0 \gg 1$, there exists $0 < \eta  \ll 1$ and the following holds true for all $\tau \in [\tau_0, \tau_1]$:
\begin{equation}\label{est:mep_out}
\big|\pa_\zeta m_\ep (\zeta, \tau)\big| \leq \phi_1^-(\zeta, \tau)  \hat \chi_{\eta}+ \phi_2^-(\zeta,\tau) \quad  \textup{for}\;\; 0 < \zeta \leq 1 - 4\nu |\log \nu|,
\end{equation}
where $\hat \chi_{\eta}$ is introduced in \eqref{def:hatchi_eta}. 
\end{lemma}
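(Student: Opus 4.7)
The proof mirrors that of Lemma \ref{lemm:mep_mid}: we will construct $\phi_1^- \hat\chi_\eta + \phi_2^-$ as a supersolution to the linear equation \eqref{eq:mep1left}, verify the initial and boundary conditions, and invoke the parabolic comparison principle on the cylinder $\{\tau_0 \leq \tau \leq \tau_1,\ 0 < \zeta \leq \zeta_-\}$. The main differences from the $\zeta \geq \zeta_+$ case handled in Lemma \ref{lemm:mep_mid} are the presence of a second boundary at $\zeta = 0$ and the extra factor $\nu$ in $\phi_2^-$, which reflects the fact that the partial mass inside the ring is naturally of order $\nu \zeta^d$ rather than $\zeta^{d-1}$.

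The first step is to establish the analogue of \eqref{est:remainder_mid}, namely
\begin{equation*}
\bigl| \Pc_1^-(\phi_1^-\hat\chi_\eta + \phi_2^-) \bigr| + |F^-| \leq \frac{1}{32\nu}\phi_1^- \hat\chi_\eta + \tfrac{1}{8} \phi_2^-, \qquad 0 < \zeta \leq \zeta_-,
\end{equation*}
so that, combined with Lemma \ref{lem:supersolutionmainleft}, we obtain the strict supersolution inequality $(\pa_\tau - \As_1^- - \Pc_1^-)(\phi_1^- \hat\chi_\eta + \phi_2^-) - F^- > 0$. In the region $\zeta \leq \zeta_-$ one has $\xi = (\zeta-1)/\nu \leq -4|\log \nu|$, so the exponential decay of $Q$ at $-\infty$ gives $Q_\nu(\zeta),\, \nu \pa_\zeta Q_\nu(\zeta) = O(\nu^2)$, and the cut-off corrections coming from $\bar\chi$ are localised in $[\zeta_0,2\zeta_0]$ where $\bar Q_\nu$ remains $O(\nu^2)$. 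Using the boundary condition $m_\ep(0,\tau) = 0$ (which follows from $u \in L^\infty$ forcing $m_u(r) = O(r^d)$ as $r \to 0$) and integrating \eqref{est:mepLinf_out2} from the origin via the computation \eqref{bd:technicalapriori1} yields $|m_\ep(\zeta)|/\zeta^{d-1} \lesssim \nu \zeta e^{-\kappa\tau}$. Combined with the modulation bounds \eqref{eq:ModR}--\eqref{eq:ModM}, this controls $P_1^-, P_0^-, \pa_\zeta P_1^-, \pa_\zeta P_0^-$ and makes all lower-order contributions subdominant. The source $F^- = \pa_\zeta E + \pa_\zeta P_0^- m_\ep$ is handled by the pointwise decay of $\pa_\xi Q, \pa_\xi^2 Q$ at $-\infty$, exactly as in the estimate of $F$ in the proof of Lemma \ref{lemm:mep_mid}.

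For the parabolic boundary: at $\tau = \tau_0$, assumption \eqref{bd:bootstrap:initialregularity} gives $|\pa_\zeta m_\ep(\zeta,\tau_0)| \leq \tfrac 12 \nu_0 e^{-\kappa\tau_0}\zeta^{d-1} = \phi_2^-(\zeta,\tau_0)$. At the outer boundary $\zeta = \zeta_-$, Lemma \ref{lem:rightboundary} together with $\pa_\xi = \nu \pa_\zeta$ gives $|\pa_\zeta m_\ep(\zeta_-,\tau)| \lesssim K e^{-\kappa\tau} + \nu$, which is bounded by $\phi_1^-(\zeta_-,\tau) + \phi_2^-(\zeta_-,\tau) = \tfrac 12 K^{5/4} e^{-\kappa\tau} + \tfrac 12 \nu e^{-\kappa\tau}$ provided $K$ is large enough. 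At the inner boundary $\zeta = 0$, the bound $m_u(r,t) = O(r^d)$ forces $\pa_\zeta m_\ep(\zeta,\tau) \to 0$ as $\zeta \to 0$ with the same rate $\zeta^{d-1}$ as $\phi_2^-(\zeta,\tau)$. Applying the parabolic maximum principle to $\phi_1^- \hat\chi_\eta + \phi_2^- \pm \pa_\zeta m_\ep$, which is a nonnegative supersolution of $\pa_\tau - \As_1^- - \Pc_1^-$ on the parabolic boundary, then yields \eqref{est:mep_out}.

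The main technical difficulty is the degenerate inner boundary at $\zeta = 0$, where the viscous part of $\As_1^-$ contains the singular coefficient $\nu(d-1)/\zeta^2$. I would handle this by a standard approximation argument: apply the maximum principle on the truncated domain $\{\delta \leq \zeta \leq \zeta_-\}$ for $\delta > 0$, using at $\zeta = \delta$ that both $|\pa_\zeta m_\ep(\delta,\tau)|$ and $\phi_2^-(\delta,\tau)$ vanish at rate $\delta^{d-1}$, and then let $\delta \to 0$. The matching vanishing rates ensure that no boundary flux is lost in the limit and that the singular zero-order term $\nu(d-1)/\zeta^2$ applied to $\phi_2^- \sim \nu \zeta^{d-1}$ is absorbed in \eqref{bdbdbd}, which is exactly why $\phi_2^-$ was designed with the profile $\zeta^{d-1}$.
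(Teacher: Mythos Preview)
Your proposal is correct and follows essentially the same comparison-principle strategy as the paper. The paper organizes the argument slightly differently, explicitly splitting the domain into $[1-2\eta,\zeta_-]$ (where the estimates of Lemma~\ref{lemm:mep_mid} carry over verbatim thanks to the support of $\hat\chi_\eta$) and $(0,1-2\eta)$ (where only $\phi_2^-$ needs to be checked as a supersolution), but the content is the same; your additional care about the degenerate boundary at $\zeta=0$ via truncation is a detail the paper omits.
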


\begin{proof} The proof is the same as for \eqref{est:mep_mid} by using the comparison principle, so we skip redundant details. Due to the localized cut-off function $\hat \chi_{\eta}$, we note that the estimate \eqref{est:remainder_mid} hold true also for $\zeta \in [1 - 2\eta,\zeta_-] $, so that using in addition \eqref{est:Linearterms1} on this interval we have
\begin{equation}\label{est:bdbdbdbdbd} \big[\pa_\tau - \As_1^- - \Pc_1^-\big] \big( \phi_1^-(\zeta, \tau) \hat \chi_{\eta} + \phi_2^-(\tau) \big)  - F^-(\zeta, \tau)  \geq \frac{1}{32 \nu} \phi_1(\zeta, \tau) \hat \chi_{\eta} + \frac{1}{8}\phi_2(\tau).
\ee
Since $\hat \chi_\eta\equiv 0$ for $\zeta\leq 1-2\eta$ it remains to check that $\phi_2^-$ satisfies
\begin{equation}\label{est:phi3}
\big[\pa_\tau - \As_1^- - \Pc_1^-\big] \phi_2^-(\zeta, \tau)  - F^-(\zeta, \tau) > 0, \quad \textup{for}\;\; \zeta \in (0, 1 - 2 \eta). 
\end{equation}
To this end, we first recall from \eqref{bdbdbd} the estimate $
\pa_\tau \phi_2^-- \As_1^-\phi_2^- \geq \frac 12 \phi_2^- $. We estimate for $\zeta \in (0, 1 - 2\eta)$ by using \eqref{est:mepLinf_out2} and $m_\varepsilon (0)=0$, 
$$|m_\ep(\zeta, \tau)| = \left|\int_0^\zeta m_{\ep, 1}(\zeta', \tau) d\zeta' \right| \lesssim \nu e^{-\kappa \tau} \zeta^d.$$
 We also have by the definition \eqref{def:Qbar} of $\bar Q_\nu$ and \eqref{eq:ModR}, 
\begin{align*}
 | P_1^- \zeta^{-1}|  + |\pa_\zeta P_1^-| + |P_0|&\lesssim \frac{|\pa_\zeta \bar Q_\nu|}{\zeta^{d-1}}+ \frac{|\bar Q_\nu|}{\zeta^{d}} + \frac{|m_\ep(\zeta, \tau)|}{\zeta^d}  + \frac{|m_{\ep,1}|}{\zeta^{d-1}}+ \left| \frac{R_\tau}{R} + \frac{1}{2} \right|\\
 & \lesssim \frac{1}{\nu \zeta_0^d} e^{-\frac{|\zeta - 1| }{2\nu}}  +  \nu e^{-\kappa \tau}+K^\frac 54 e^{-\kappa \tau} \lesssim \frac{\nu}{\zeta_0^d} + K^{\frac 54}e^{-\kappa \tau} \lesssim K^\frac 54 e^{-\kappa \tau}.
\end{align*}
Hence, for $\tau_0$ large enough, we have
$$
|\Pc_1^- \phi_2^-| \leq \Big((d-1)| P_1^- \zeta^{-1}| + |\pa_\zeta  P_1^-| + |P_0^-| \Big)\phi_2^- \leq CK^\frac 54 e^{-\kappa \tau }\phi_2^- \leq \frac{1}{16} \phi_2^-.
$$
For the estimate of $F^-$, we have the rough estimate for $\zeta \in (0, 1 - 2\eta)$,
\begin{align*}
 |F(\zeta, \tau)| \lesssim \frac{1}{\nu^2 \zeta^d}  e^{-\frac{|\xi|}{2}} \mathbf{1}_{\{\zeta \geq \zeta_0\}} \lesssim \frac{1}{\nu^2 \zeta_0^{2d-1}} e^{-\frac{\eta}{\nu}} e^{\kappa \tau} \phi_2^-  \lesssim \nu \phi_2^-
\end{align*}
for $\tau_0$ large enough. Gathering all the above estimates yields the estimate \eqref{est:phi3}. Combining \eqref{est:bdbdbdbdbd} and \eqref{est:phi3} shows that for all $0<\zeta\leq \zeta_-$:
$$
 \big[\pa_\tau - \As_1^- - \Pc_1^-\big] \big( \phi_1^-(\zeta, \tau) \hat \chi_{\eta} + \phi_2^-(\tau) \big)  - F^-(\zeta, \tau)  \geq \frac{1}{32 \nu} \phi_1(\zeta, \tau) \hat \chi_{\eta} + \frac{1}{8}\phi_2(\tau).
$$
The end of the proof of Lemma \ref{lemm:mep_out} is then exactly as that of Lemma \ref{lemm:mep_mid}, relying on the above inequality, so we omit it. \end{proof}

\subsection{Proof of  Proposition \ref{prop:trap_mq} and conclusion of the main theorem} \label{sec:4.4}
\begin{proof}[Proof of Proposition \ref{prop:trap_mq}] We first improve estimates introduced in Definition \ref{def:bootstrap} by a $\frac 12 $ factor. We claim that for all $\tau \in [\tau_0, \tau_1]$:
\begin{align}
\label{proofbt:1}& \frac{1}{2} e^{-\frac{\tau}{2}}\leq R(\tau) \leq 2e^{-\frac{\tau}{2}}, \qquad \frac{M_0}{2}\leq M(\tau) \leq 2M_0,\\
\label{proofbt:2} & \big| \pa_\zeta m_\ep(\zeta, \tau)\big| \leq \frac{1}2 e^{-\kappa \tau} \left(K^{\frac{5}{4}} e^{-\frac 38 \frac{\zeta-\zeta_+}{\nu} }  \hat \chi_{\eta} + 1 \right), \qquad \textup{for}\; \zeta \geq \zeta_+, \\
\label{proofbt:3}& \big| \pa_\zeta m_\ep(\zeta, \tau)\big| \leq \frac{1}2 e^{-\kappa \tau} \left(K^\frac{5}{4} e^{-\frac{3}{8} \frac{ \zeta_--\zeta}{\nu} } \hat \chi_{\eta} +\nu \zeta^{d-1}  \right)  \qquad \textup{for}\; 0 < \zeta \leq \zeta_-,\\
\label{proofbt:4} & \|m_q(\tau) \|_\inn \leq \frac K2 e^{-\kappa \tau}.
\end{align}
The inequality \eqref{proofbt:1} is proved in Corollary \ref{cor:mod}. The inequalities \eqref{proofbt:2} and \eqref{proofbt:3} are proved in Lemmas \ref{lemm:mep_mid} and \ref{lemm:mep_out} (using \eqref{exterior:def:phi1phi2}). Hence it only remains to prove \eqref{proofbt:4}. Let $f(\tau) =\| m_q(\tau)\|_\inn^2$, we aim at proving
\begin{equation}\label{est:ftaus}
f(\tau) \leq \frac{K^2}{4} e^{-2\kappa \tau}, \quad \forall \tau \in [\tau_0, \tau_1].
\end{equation}
From \eqref{bd:estimationtau} we infer that for $\kappa<\frac{d-2}{4}$, we have $\nu^2\leq e^{-2\kappa \tau}$ for $\tau_0$ large enough depending on $\bar M_0$. Lemma \ref{lemm:mqinn}, together with this inequality and \eqref{bd:boundarynorm} then implies
\be \label{est:fs}
 \frac{d}{ds} (e^{\delta_2 s}f) \leq e^{\delta_2 s}(C\nu^{-2}e^{\frac A 2} \|  m_\varepsilon (\tau) \|_{\bd}^2 + C\nu^2)
\leq e^{\delta_2 s-2\kappa \tau}  \left(CK^\frac{5}{2}e^{-\frac A4} +C\right).
\ee
Recall that $\frac{d\tau}{ds} =\nu$ so that $\frac{d}{ds}(\delta_2 s-2\kappa \tau)\geq \frac{\delta_2}{2}$ for $\tau_0$ large enough. From this, we deduce that $e^{\delta_2(s_0-s)}\leq e^{2\kappa (\tau_0-\tau)}$ and $\int_{s_0}^s e^{\delta_2 \tilde s-2\kappa \tau(\tilde s)}d\tilde s \leq \frac{2}{\delta_2}e^{\delta_2 s-2\kappa \tau} $. Integrating \eqref{est:fs} with time $s$ using these two inequalities yields
\begin{align}
\nonumber f(s) & \leq e^{\delta_2 (s_0-s)} f(s_0)+e^{-\delta_2 s}  \left(CK^\frac{5}{2}e^{-\frac A4} +C\right) \int_{s_0}^s e^{\delta_2 \tilde s-2\kappa \tau(\tilde s)}  d\tilde s \\
\label{proofbootstrap:inter1}& \leq e^{2 \kappa (\tau_0-\tau)}f(s_0)+e^{-2\kappa \tau}\left(CK^\frac{5}{2}e^{-\frac A4} +C\right) \ \leq \ e^{-2\kappa \tau}\left(CK^\frac{5}{2}e^{-\frac A4} +C\right),
\end{align}
where we used \eqref{est:mqL2ome} with constant $K=1$ at initial time $s_0$ from \eqref{bd:bootstrap:initial}. The estimate \eqref{proofbootstrap:inter1} implies \eqref{est:ftaus} upon choosing $K$ large enough with $K e^{-\frac  A2}$ small enough. Hence \eqref{proofbt:1}, \eqref{proofbt:2}, \eqref{proofbt:3} and \eqref{proofbt:4} are valid.

Let now $\mathcal T$ be the set of times $\tau_1\geq \tau_0$ such that the solution is trapped on $[\tau_0,\tau_1]$. By continuity, the set $\mathcal T$ is closed. Now, for any $\tau_1\in \mathcal T$, the inequalities underlying Definition \ref{def:bootstrap} are strict inequalities at time $s_1$ as they are improved by the factor $\frac{1}{2}$ using \eqref{proofbt:1}, \eqref{proofbt:2}, \eqref{proofbt:3} and \eqref{proofbt:4}. Hence by continuity of the flow of \eqref{eq:murt}, we have $[\min (\tau_1-\delta,\tau_0),\tau_1+\delta]\subset \mathcal T$ for some $\delta$ small enough, so that $\mathcal T$ is open in $[s_0,\infty)$. By connectedness, $\mathcal T=[s_0,\infty)$ which concludes the proof of Proposition \ref{prop:trap_mq}.  \end{proof}

\begin{proof}[Proof of Theorem \ref{theo:1}] Theorem \ref{theo:1} is just a direct consequence of Proposition \ref{prop:trap_mq}. Recall that $\frac{d\tau}{dt} = \frac{M(t)}{R^d(t)}$, we use \eqref{modulation:bd:asymptoticM} and \eqref{modulation:bd:asymptoticR} to write
$$\frac{d \tau}{dt} = \frac{M_\infty}{\tilde{R}_\infty^d  } e^{\frac{d}{2}\tau} \big[1 + \Oc(e^{-\kappa \tau})\big].$$
Solving this equation yields the existence of $T > 0$ such that
\begin{equation}\label{eq:reltauT}
\tau =   - \frac{2}{d} \log \left( \frac{d M_\infty}{2 \tilde{R}_\infty^d } (T-t) \right)\big[1 + o_{t \to T}(1)\big].
\end{equation}
Hence, the estimate \eqref{modulation:bd:asymptoticR} is written in terms of the $t$ variable as
$$
R(t) = \tilde{R}_\infty e^{ \frac{1}{2} \Big[\frac{2}{d} \log \left( \frac{d M_\infty}{2 \tilde{R}_\infty^d } (T-t) \right)\Big] } \big[1 + o_{t \to T}(1)\big] = \big[\frac d2 M_\infty (T-t)\big]^\frac{1}{d}\big[1 + o_{t \to T}(1)\big]. 
$$
Unwinding the change of variables \eqref{def:murt}, \eqref{def:mw}, \eqref{def:mepmq}, one gets
$$
u(r)=\frac{M}{R^{d-1}\lambda}(\pa_\xi Q(\xi)+\tilde u(r)), \qquad \tilde u(r)=\frac{1}{\zeta^{d-1}} \pa_\xi \tilde Q(\xi)-\pa_\xi Q(\xi)+\frac{1}{\zeta^{d-1}}\pa_\xi m_\varepsilon (\zeta),
$$
where $\tilde Q(\xi)=\bar Q_\nu(\zeta)$. Since the solution is global in time $\tau$, the desired estimate \eqref{eq:bdvarepsilonmainth} for $\tilde u$ then directly follows from \eqref{def:Qxi}, \eqref{def:xis} and \eqref{modulation:bd:asymptoticnu} to estimate the first term, and \eqref{est:mepLinf_out}, \eqref{est:mepLinf_out2} and \eqref{est:mqL2ome} to estimate the second one (upon using a parabolic regularity argument for $\xi_{-}\leq \xi \leq \xi_+$ similar to Lemma \ref{lem:rightboundary} that we omit).

We now turn to the continuity of the blowup dynamics. Fix $u_0$ satisfy the requirements of Proposition \ref{prop:trap_mq}, and let $u$ solve \eqref{3Dsys} with data $u_0$. Then any $v_0$ close enough to $u_0$ in $L^\infty$ satisfies the requirements of Proposition \ref{prop:trap_mq} with same bootstrap constants ($A,K,...$), so that the solution $v$ to \eqref{3Dsys} with data $v_0$ blows up at time $T(v_0)$ and satisfies \eqref{eq:decomp_murt}, \eqref{eq:bdparametersmainth} and \eqref{eq:bdvarepsilonmainth} as well. We now prove the continuity of $T$ and $M_\infty$. Let $\tau_u,M_u,R_u$ and $\tau_v,M_v,R_v$ denote $u$ and $v$ related parameters respectively. Integrating the relations $dt/d\tau=R^d/M$, and using \eqref{modulation:bd:asymptoticR} and \eqref{modulation:bd:asymptoticM} we obtain:
\begin{align}
\label{end:id:parametersu} T(u_0)=\int_{\tau_u(0)}^\infty \frac{R^d_u(\tau)}{M_u(\tau)}d\tau, \qquad |M_u(\tau)-M_\infty(u_0)|+\int_{\tau}^\infty \left|\frac{R^d_u(\tilde \tau)}{M_u(\tilde \tau)}\right|d\tilde \tau \leq Ce^{-\kappa \tau} \quad \forall \tau\geq \tau_u(0),\\
\label{end:id:parametersv} T(v_0)=\int_{\tau_v(0)}^\infty \frac{R^d_v(\tau)}{M_v(\tau)}d\tau, \qquad |M_v(\tau)-M_\infty(v_0)|+\int_{\tau}^\infty \left|\frac{R^d_v(\tilde \tau)}{M_v(\tilde \tau)}\right|d\tilde \tau\leq Ce^{-\kappa \tau} \quad \forall \tau\geq \tau_v(0),
\end{align}
with same constant $C>0$. Let now $\delta>0$. There then exists $t_\delta\in [0,T(u_0))$ such that $Ce^{-\kappa \tau_u(t_\delta)}\leq \delta/4$. By continuity of the flow of \eqref{3Dsys} with respect to the initial data in $L^\infty(\mathbb R^d)$ (see \cite{B} and references therein), we have that $T(v_0)\geq t_\delta$ for $v_0$ close to $u_0$, and that $\tau_v\to \tau_u$, $R_v\to R_u$ and $M_v\to M_u$ uniformly on $[0,t_\delta]$, as $v_0\to u_0$. Hence for $v_0$ close enough to $u_0$, $Ce^{-\kappa \tau_v(t_\delta)}\leq \delta/3$, $|M_v(t_\delta)-M_u(t_\delta)|\leq \delta/3$, and $|\int_{\tau_v(0)}^{\tau_v(t_\delta)} \frac{R^d_v(\tau)}{M_v(\tau)}d\tau- \int_{\tau_u(0)}^{\tau_u(t_d)} \frac{R^d_u(\tau)}{M_u(\tau)}d\tau|\leq \delta/3$. Combining these inequalities with \eqref{end:id:parametersu} and \eqref{end:id:parametersv} one obtains that $|T(v_0)-T(u_0)|\leq \delta$ and $|M_\infty(v_0)-M_\infty(u_0)|\leq \delta$. This proves the continuity of $T$ and $M_\infty$ and ends the proof of Theorem \ref{theo:1}.
\end{proof}

\appendix 

\section{Functional analysis} \label{ap:1}

\begin{lemma}[Poincar\'e and Sobolev in $H^1_{\omega_0}$.] \label{lemm:Poincare}
There exists $C>0$ such that the following inequalities hold true for any $u\in H^1_{\omega_0}$,
\begin{align}\label{iq:Poincare}
& \int_{\mathbb R} |u(y)|^2 \omega_0 (y)dy\leq C\int_{\mathbb R} |\partial_y u(y)|^2\omega_0 (y)dy,\\
\label{iq:LinfSobolev}
& |u(\xi)| \leq Ce^{-\frac{|\xi|}{4}}  \left(\int_{\mathbb R} |\partial_y u(y)|^2 \omega_0 (y)dy\right)^{\frac 12} \qquad \mbox{for all }\xi\in \mathbb R.
\end{align}
\end{lemma}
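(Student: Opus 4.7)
The weight expands as $\omega_0(y) = e^{y/2} + 2 + e^{-y/2}$, so both inequalities reduce to standard weighted Hardy-type estimates with one-sided exponential weights. I will first reduce to $u \in C_c^\infty(\mathbb{R})$ and then use a standard density argument, relying on the fact that $H^1_{\omega_0} \hookrightarrow C_0(\mathbb{R})$ (continuity from $u, u' \in L^2_{\textup{loc}}$, and vanishing at infinity from $u$ being absolutely continuous together with $\int u^2 \omega_0 < \infty$ and $\omega_0(y) \to \infty$).

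For the Poincar\'e inequality \eqref{iq:Poincare}, my plan is to split $\omega_0 = e^{y/2} + 2 + e^{-y/2}$ and integrate by parts on each exponential piece. For $u \in C_c^\infty$, I would write
\[
\int_{\mathbb{R}} u^2 e^{y/2} dy \;=\; -4\int_{\mathbb{R}} u u' e^{y/2} dy
\]
(boundary terms vanish), then apply Cauchy--Schwarz and absorb to obtain $\int u^2 e^{y/2} \leq 16 \int (u')^2 e^{y/2}$; the symmetric identity handles the $e^{-y/2}$ part. For the constant term $2$ in $\omega_0$, I use the pointwise bound $2 \leq e^{y/2} + e^{-y/2}$ to reduce it to the two preceding cases. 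Summing yields $\|u\|_{L^2_{\omega_0}}^2 \leq C \|u'\|_{L^2_{\omega_0}}^2$ for a numerical constant $C$, and density extends this to $H^1_{\omega_0}$.

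For the pointwise Sobolev bound \eqref{iq:LinfSobolev}, I would use the fundamental theorem of calculus from infinity. For $\xi \geq 0$ and $u \in H^1_{\omega_0}$, since $u$ vanishes at $+\infty$,
\[
u(\xi) = -\int_\xi^{+\infty} u'(y)\,dy,
\]
and Cauchy--Schwarz against the weight gives
\[
|u(\xi)|^2 \leq \left(\int_\xi^\infty e^{-y/2}dy\right)\!\left(\int_\xi^\infty (u')^2 e^{y/2}dy\right) \leq 2 e^{-\xi/2} \|u'\|_{L^2_{\omega_0}}^2,
\]
using $e^{y/2} \leq \omega_0(y)$. The symmetric bound for $\xi \leq 0$ follows from integrating from $-\infty$ to $\xi$ and using $e^{-y/2} \leq \omega_0(y)$. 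Combining yields \eqref{iq:LinfSobolev} with $C = \sqrt{2}$.

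The main (modest) technical point is the justification that the boundary terms at $\pm\infty$ vanish in both arguments. I would handle this by first working with $u \in C_c^\infty(\mathbb{R})$, where vanishing is automatic, and then appealing to density of $C_c^\infty(\mathbb{R})$ in $H^1_{\omega_0}$ (standard since $\omega_0$ is smooth and strictly positive, approximation via truncation and mollification). Both inequalities are then stable under the limit. No delicate spectral input is required; everything reduces to one-variable calculus with exponential weights.
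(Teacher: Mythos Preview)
Your proof is correct and rests on the same core ideas as the paper's: the fundamental theorem of calculus plus Cauchy--Schwarz for the pointwise bound \eqref{iq:LinfSobolev}, and integration by parts exploiting $\partial_y(e^{\pm y/2}) = \pm\tfrac{1}{2}e^{\pm y/2}$ for the Poincar\'e inequality \eqref{iq:Poincare}. The main difference is organizational. The paper proves \eqref{iq:LinfSobolev} first and then uses it to control the compact piece $\int_{-2}^2 u^2$ in \eqref{iq:Poincare}, after which it splits $u$ via cutoffs into pieces supported on half-lines where $\partial_y\omega_0 \approx \pm\omega_0$ and runs the integration-by-parts argument there. You instead split the weight $\omega_0 = e^{y/2}+2+e^{-y/2}$ and integrate by parts term by term on the whole line, which is slightly more direct and avoids the cutoff decomposition; it also makes the two inequalities logically independent. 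For the vanishing at infinity, the paper argues directly by extracting a sequence $y_n\to\pm\infty$ with $\omega_0(y_n)u(y_n)^2\to 0$ from $u\in L^2_{\omega_0}$, whereas you go through density of $C_c^\infty$; both are standard and equally valid here.
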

\begin{proof}

The second inequality \eqref{iq:LinfSobolev} is a direct consequence of the fundamental Theorem of Calculus and of Cauchy-Schwarz. Indeed, as $\omega_0 u^2\in L^1(\mathbb R)$, there exists $y_n\to -\infty$ such that $\omega_0(y_n) u^2(y_n)\to 0$ and hence $u(y_n)\to 0$. For $y\leq 0$, we have $u(y)=u(y_n)+\int_{y_n}^y \pa_y u$ so that $u(y)=\int_{0}^y \pa_y u$ by letting $n\to \infty$. We estimate since $\omega_0\approx e^{|\xi|/2}$:
$$
|u(y)|=|\int_{-\infty}^y \pa_y u |\leq \left(\int_{-\infty}^y|\partial_y u|^2\omega_0 \right)^{\frac 12}  \left(\int_{-\infty}^{y} \omega_0^{-1} \right)^{\frac 12} \lesssim  \left(\int_{-\infty}^y|\partial_y u|^2\omega_0 \right)^{\frac 12} e^{\frac y4}.
$$
For $y\geq 0$ the proof is the same upon replacing $-\infty$ by $\infty$ in the integrals. Hence \eqref{iq:LinfSobolev} is proved. From \eqref{iq:LinfSobolev}, we deduce
\be \label{poincare:bd:inter1}
\int_{-2}^2 u^2dy \lesssim \int_{\mathbb R} |\partial_y u(y)|^2 \omega_0 (y)dy.
\ee
Take now $\chi$ a cut-off function such that $\chi(y)=2$ for $y\geq 1$, $\chi(y)=0$ for $y\leq 1$, and write $u_1(y)=\chi(y)u$ and $u_2(y)=\chi(-y)u$. We have for $i=1,2$, integrating by parts:
$$
\int_{\mathbb R} u_i \pa_y u_i \omega_0 dy=-\frac{1}{2} \int_{\mathbb R} u_i^2 \pa_y \omega_0dy.
$$
Since on $(-\infty,1]$ and $[1,\infty)$ we have $\pa_y\omega_0\approx \omega_0$ from the formula \eqref{def:omega}, we deduce that
$$
\int_{\mathbb R} |u_i|^2\omega_0 \lesssim \left| \int_{\mathbb R} u_i \pa_y u_i \omega_0\right| \qquad \mbox{for }i=1,2.
$$
Applying Cauchy-Schwarz and Young inequalities yields
\be \label{poincare:bd:inter2}
\int_{\mathbb R}|u_i|^2\omega_0 \lesssim \int_{\mathbb R}|\pa_y u_i|^2\omega_0 \lesssim \int_{\mathbb R}|\pa_y u|^2\omega_0,
\ee
where we used \eqref{iq:LinfSobolev} in the last inequality. Combining \eqref{poincare:bd:inter1} and \eqref{poincare:bd:inter2} shows \eqref{iq:Poincare}. \end{proof}

\begin{lemma}[Coercivity of $\Ls_0$] \label{lemm:coerL02} There exists $A^*,\delta_1>0$ such that the following holds true for all $A\geq A^*$. Assume that $f\in H^2_{\omega_0}$ satisfies $\int_{\mathbb R} f \pa_\xi Q \chi_A \omega_0 d\xi = 0$. Then:
\begin{align}
 \label{coer:id:coercivity2}
& \delta_1  \| f\|^2_{H^1_{\omega_0}} \leq - \int_{\mathbb R} \Ls_0 f f \omega_0 d\xi, \\
 \label{coer:id:coercivity}
& \delta_1  \| f\|^2_{H^2_{\omega_0}} \leq \int_{\mathbb R} |\Ls_0 f|^2 \omega_0 d\xi.
\end{align}
\end{lemma}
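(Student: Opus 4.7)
The plan is to derive both coercivity estimates from Lemma \ref{lemm:coerLs0}, using the near-orthogonality of $f$ with $\pa_\xi Q$ (weighted by $\chi_A$) together with the rapid decay of $\pa_\xi Q$ in $L^2_{\omega_0}$.

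For \eqref{coer:id:coercivity2}, the key observation is that the orthogonality condition $\int f \pa_\xi Q \chi_A \omega_0 d\xi = 0$ implies
\[
\langle f, \pa_\xi Q\rangle_{L^2_{\omega_0}} = \int_{\mathbb R} f \pa_\xi Q (1-\chi_A)\omega_0 d\xi,
\]
where $1-\chi_A$ is supported in $\{|\xi|\geq A\}$. Since $|\pa_\xi Q(\xi)|\lesssim e^{-|\xi|/2}$ and $\omega_0(\xi)\approx e^{|\xi|/2}$, Cauchy--Schwarz yields
\[
\langle f, \pa_\xi Q\rangle_{L^2_{\omega_0}}^2 \leq \|f\|_{L^2_{\omega_0}}^2 \int_{|\xi|\geq A}|\pa_\xi Q|^2 \omega_0 d\xi \leq C e^{-A/2}\|f\|_{L^2_{\omega_0}}^2.
\]
Plugging this into \eqref{bd:coercivityquadraticform} gives $-\langle \Ls_0 f, f\rangle_{L^2_{\omega_0}} \geq (\delta - C e^{-A/2})\|f\|_{H^1_{\omega_0}}^2$. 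Choosing $A^*$ large enough so that $Ce^{-A^*/2}\leq \delta/2$ delivers \eqref{coer:id:coercivity2} with $\delta_1=\delta/2$.

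For \eqref{coer:id:coercivity}, I would bootstrap from \eqref{coer:id:coercivity2} using the equation. By Cauchy--Schwarz and \eqref{coer:id:coercivity2},
\[
\delta_1 \|f\|_{H^1_{\omega_0}}^2 \leq -\langle \Ls_0 f, f\rangle_{L^2_{\omega_0}} \leq \|\Ls_0 f\|_{L^2_{\omega_0}} \|f\|_{L^2_{\omega_0}} \leq \|\Ls_0 f\|_{L^2_{\omega_0}} \|f\|_{H^1_{\omega_0}},
\]
which implies $\|f\|_{H^1_{\omega_0}} \leq \delta_1^{-1}\|\Ls_0 f\|_{L^2_{\omega_0}}$. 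Next, rewrite the defining relation
\[
\pa_\xi^2 f = \Ls_0 f + \bigl(\tfrac12 - Q\bigr)\pa_\xi f - (\pa_\xi Q) f,
\]
and use that $|\tfrac12 - Q|\leq 1$ and $|\pa_\xi Q|\leq \tfrac12$ are bounded to estimate in $L^2_{\omega_0}$:
\[
\|\pa_\xi^2 f\|_{L^2_{\omega_0}} \leq \|\Ls_0 f\|_{L^2_{\omega_0}} + \|\pa_\xi f\|_{L^2_{\omega_0}} + \tfrac12 \|f\|_{L^2_{\omega_0}} \leq C\|\Ls_0 f\|_{L^2_{\omega_0}}.
\]
Adding the $H^1_{\omega_0}$ bound obtained above gives $\|f\|_{H^2_{\omega_0}}^2 \leq C\|\Ls_0 f\|_{L^2_{\omega_0}}^2$, which is \eqref{coer:id:coercivity} after possibly shrinking $\delta_1$.

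I do not anticipate a major obstacle. The only subtle point is ensuring that the $A$-dependent threshold works uniformly: the constant $C$ in the bound $\langle f,\pa_\xi Q\rangle_{L^2_{\omega_0}}^2 \leq Ce^{-A/2}\|f\|_{L^2_{\omega_0}}^2$ is independent of $A$ (it depends only on $\pa_\xi Q$ and $\omega_0$), so the choice of $A^*$ can be made independently of $f$, as required by the lemma's statement.
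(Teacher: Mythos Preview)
Your proof is correct and follows essentially the same approach as the paper. The only cosmetic difference is that the paper first decomposes $f = c\,\pa_\xi Q + g$ with $g\perp \pa_\xi Q$ in $L^2_{\omega_0}$ and applies Lemma~\ref{lemm:coerLs0} to $g$, whereas you bypass this decomposition by bounding the projection term $\langle f,\pa_\xi Q\rangle_{L^2_{\omega_0}}^2$ directly in \eqref{bd:coercivityquadraticform}; the bootstrap from $H^1_{\omega_0}$ to $H^2_{\omega_0}$ via the explicit form of $\Ls_0$ is identical in both.
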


\begin{proof} We first decompose
\be
\label{coer:id:decomposition}
f=c\pa_\xi Q+g, \qquad \mbox{with}\quad \int_{\mathbb R} g \pa_\xi Q \omega_0=0.
\ee
We compute by integrating \eqref{coer:id:decomposition} against $\pa_\xi Q\omega_0$ that $c=\| \pa_\xi Q\|_{L^2_{\omega_0}}^{-2}\int_{\mathbb R} f(1-\chi_A)\pa_\xi Q\omega_0d\xi$. Using Cauchy-Schwarz, $\omega_0(\xi)\approx e^{|\xi|/2}$ and $|\pa_\xi Q(\xi)|\lesssim e^{-|\xi|/2}$ we get $|c|\lesssim e^{-A/4}\| f\|_{L^2_{\omega_0}}$. Thus:
\begin{align}
\label{coer:bd:preliminary2}
& \| f\|_{H^1_{\omega_0}}\leq 2 \| g\|_{H^1_{\omega_0}} \quad \mbox{and} \quad \int_{\mathbb R} \mathcal L_0 f f \omega_0 d\xi=  \int_{\mathbb R} \mathcal L_0 gg \omega_0d\xi,\\
\label{coer:bd:preliminary}
& \| f\|_{H^2_{\omega_0}}\leq 2 \| g\|_{H^2_{\omega_0}} \quad \mbox{and} \quad \| \mathcal L_0 f\|_{L^2_{\omega_0}} = \| \mathcal L_0 g\|_{L^2_{\omega_0}} ,
\end{align}
 for $A$ large enough for the inequalities, and using $\mathcal L_0\pa_\xi Q=0$ for the equalities.
 
 We now apply Lemma \ref{lemm:coerLs0} to $g$ and get $\langle -\mathcal L_0 g, g\rangle_{L^2_{\omega_0}}\geq \delta \| g \|_{H^1_{\omega_0}}^2$. This inequality and \eqref{coer:bd:preliminary2} imply the first estimate \eqref{coer:id:coercivity2} of the Lemma. Using $|xy|\leq \delta |x|/2+|y|/2\delta$ we have $\langle -\mathcal L_0 g, g\rangle_{L^2_{\omega_0}}\leq \| \mathcal L_0 g \|_{L^2_{\omega_0}}^2/2\delta+\delta \| g\|_{L^2_{\omega_0}}^2/2$. Combining these two inequalities yields
\be
\label{coer:bd:inter1}
\| g\|_{H^1_{\omega_0}}\leq \delta^{-1} \| \mathcal L_0 g\|_{L^2_{\omega_0}}.
\ee
Since $\mathcal L_0g=\pa_\xi^2 g-(1/2-Q)\pa_\xi g+\pa_\xi Q g$, we deduce $|\pa_\xi^2 g|\leq |\mathcal L_0 g|+|\pa_\xi g|/2+|g|/2$, so that:
\be
\label{coer:bd:inter2}
\| \pa_\xi^2 g\|_{L^2_{\omega_0}}\leq \| \mathcal L_0 g\|_{L^2_{\omega_0}}+\| g\|_{H^1_{\omega_0}}.
\ee
Combining \eqref{coer:bd:inter1} and \eqref{coer:bd:inter2} shows $\| g\|_{H^2_{\omega_0}}\leq C(\delta)\| \mathcal L_0 g\|_{L^2_\omega}$. Combining this inequality and \eqref{coer:bd:preliminary} shows the second inequality of the Lemma \eqref{coer:id:coercivity}.\end{proof}

\def\cprime{$'$}

\end{document}